\newcommand{\ol}[1]{\overline{{#1}}}
\newcommand{\wh}[1]{\widehat{{#1}}}
\newtheorem{thm}{Theorem}[section]
\newtheorem{thmx}{Theorem}
\newtheorem{cor}[thm]{Corollary}
\newtheorem{lem}[thm]{Lemma}
\newtheorem{prop}[thm]{Proposition}
\theoremstyle{definition}
\newtheorem{defn}[thm]{Definition}
\newtheorem{hyp}[thm]{Hypothesis}
\theoremstyle{remark}
\newtheorem{rem}[thm]{Remark}
\newtheorem{ex}[thm]{Example}
\newcommand{\gen}[1]{\left\langle#1\right\rangle}
\newcommand{\prs}[2]{\gen{#1\parallel #2}}
\newcommand{\ball }{\mathbb{B}}
\newcommand{\ga }{\Gamma}
\renewcommand{\d }{{\rm d} }
\renewcommand{\ge }{\geqslant}
\newcommand{\cC }{\mathcal C}
\newcommand{\cH }{\mathcal H}
\newcommand{\cL }{\mathcal L}
\newcommand{\cP }{\mathcal P}
\newcommand{\cS }{\mathcal S}
\newcommand{\cT }{\mathcal T}
\newcommand{\N }{\mathbb N}
\newcommand{\Z }{\mathbb Z}
\newcommand{\fftp}{{\rm fftp}}
\newcommand{\diam}{{\rm diam}}
\newcommand{\dist}{{\rm d}}
\newcommand\Aut{\mathrm{Aut}}
\newcommand\geol{\mathsf{Geo}} 
\newcommand\cone{\mathsf{cone}}
\newcommand{\cte}{M}
\newcommand{\ctee}{M}
\newcommand{\cteee}{R}
\newcommand{\cA}{\mathcal{A}}
\def\coloneq{\mathrel{\mathop\mathchar"303A}\mkern-1.2mu=}
\begin{document}

\title[Counting subgraphs in fftp graphs with symmetry]{Counting subgraphs in fftp graphs with symmetry}

\address{Departamento de Matem\'{a}ticas, Universidad Aut\'{o}noma de Madrid and Instituto de
Ciencias Matem\'{a}ticas, CSIC-UAM-UC3M-UCM
.}
\author{Yago Antol\'{i}n}
\email[Yago Antol\'{i}n]{yago.anpi@gmail.com}

\thanks{The author is supported by the Juan de la Cierva grant IJCI-2014-22425, and acknowledges partial
support from the Spanish Government through grants number MTM2014-54896 and MTM2017-82690-P, and through the “Severo Ochoa Programme for Centres of Excellence in R\&{}D” (SEV-2015‐0554)}
\date{\today}
\begin{abstract}
Following ideas that go back to Cannon, we show the rationality of various generating functions of growth sequences  counting embeddings of convex subgraphs in locally-finite, vertex-transitive graphs with the (relative) falsification by fellow traveler property (fftp). In particular, we recover results of Cannon, of Epstein, Iano-Fletcher and Zwick, and of Calegari and Fujiwara. One of our applications concerns Schreier coset graphs of hyperbolic groups relative to quasi-convex subgroups, we show that these graphs have rational growth, the falsification by fellow traveler property, and the existence of a lower bound for the growth rate  independent of the finite generating set and the infinite index quasi-convex subgroup. 
\end{abstract}

\keywords{hyperbolic groups, Cayley graphs, Schreier graphs, growth series,  languages of geodesics, falsification by fellow traveler property,  automatic groups, uniform exponential growth}

\subjclass[2010]{20F65, 20F10, 20F67,05A15, 05C25}

\maketitle



\section{Introduction}
In the celebrated paper \cite{Cannon}, Cannon showed that the growth of  groups acting properly and cocompactly on $\mathbb{H}^n$ is rational, i.e, the generating function of the sequence counting the number of elements in the ball of radius $n$ is a rational function.
His ideas were successively used by Gromov \cite{Gromov} showing that (word) hyperbolic groups have rational growth, by Epstein, Iano-Fletcher and Zwick \cite{EpsteinGrowth} who improved results indicated by Saito \cite{Saito} showing that the generating function  counting the number of embeddings of finite subgraphs in  geodesic automatic Cayley graphs is rational, and recently by Calegari and Fujiwara \cite{CalFuj} obtaining the previous result for vertex-transitive hyperbolic graphs, not necessarily Cayley graphs.

Neumann and Shapiro \cite{NS}  observed that one can use Cannon's arguments under an hypothesis weaker than hyperbolicity, called  the {\it falsification by fellow traveler property} (fftp). 
A graph has this property if there is a constant $\cte$, such that every non-geodesic path $\cte$-fellow travels with a shorter one. 
For Cayley graphs, this property has been widely studied  and there are several examples beyond hyperbolicity. 
The following families of groups have Cayley graphs with fftp for at least one generating set: 
virtually abelian groups and geometrically finite hyperbolic groups \cite{NS}, 
Coxeter groups and groups acting simply transitively on the chambers of locally finite buildings \cite{NoskovCoxeter}, 
groups acting cellularly on locally finite CAT(0) cube complexes where the action is simply transitive on the vertices \cite{NoskovCC}, 
Garside groups \cite{Holt} and Artin groups of large type \cite{HoltRees}. 
The property of having a generating set with fftp is preserved under relative hyperbolicity \cite{AC3} and arguing similarly as in \cite{JJJ} it follows that it is also preserved under graph products.

This paper aims to push the ideas of Cannon to the limit; we will present a common generalization of the previous results for counting ``convex" subgraphs (not necessarily  finite) on locally finite, vertex-transitive graphs\footnote{Let $\ga$ be a graph. We will say that $G\leqslant \Aut(\ga)$ is {\it vertex-transitive} (or that $G$ {\it acts vertex transitively}) if there is a single orbit of vertices of $\ga$ by the action of $G$. If $\Aut(\ga)$ is vertex-transitive, we will say that $\ga$ is vertex-transitive.} (not necessarily Cayley graphs)  with the (relative) falsification by fellow traveler property.  
Graphs in this paper are symmetric directed graph, meaning that all edges are oriented, and if there is an edge from a vertex $v$ to a vertex $u$ there should be an edge from $u$ to $v$. See Section \ref{Sec:not}. In particular, the combinatorial metric on the vertices induced by directed paths coincides with the metric induced by the topological realization where each edge is isometric to the unit interval.

Let $\ga$ be a locally finite, connected, vertex-transitive graph. 
Let $\d_\ga$ denote the combinatorial graph metric on $V\ga$, the vertices of $\ga$. 
Let $Z$ be some graph and let $e_Z(n)$ be the number of different embeddings of $Z$ as a complete subgraph in $\ball_{v_0}(n)$, the  ball of radius $n$ of $\ga$ with center at $v_0$, i.e.
$$e_Z(n)=\dfrac{\sharp \{f\colon Z\to \ga \mid f \text{ injective graph morphism},\, f(Z)\subseteq \ball_{v_0}(n)\}}{|\Aut(Z)|}.$$
For example, if $Z=\bullet$ is a vertex, $e_\bullet(n)$ counts the number of vertices in the ball of radius $n$, and Cannon's  result asserts that  when $\ga$ is the Cayley graph of a group acting properly and cocompactly in $\mathbb{H}^n$,
 $\sum_{n\geq 0} e_\bullet(n) t^n \in \mathbb{Z}[[t]]$
is a rational function. i.e. an element of $\mathbb{Q}(t)$.

In the case $Z$ is infinite, $e_Z(n)$ is equal to zero for all $n$, and to deal with this, we will count embeddings of $Z$ with non-trivial intersection with the ball of radius $n$. However, we need to restrict  to  some family of embeddings. 

\begin{defn}
Let $\ga$ be a graph and  $G\leqslant \Aut(\ga)$ acting vertex transitively. 
A subgraph $Z$ of $\ga$ has {\it $G$-proper embeddings} if  for all $v\in V\ga$, the set  $\{gZ \mid g\in G, v\in gZ\}$ is finite.
\end{defn}

Given $G\leqslant \Aut(\ga)$ and a subgraph $Z$ of $\ga$ with $G$-proper embeddings, we will consider the function $i_{(G,Z)}(n)$ counting the number of $gZ\subseteq \ga$,  $g\in G$, such that $gZ$ {\it intersects} non-trivially the ball of radius $n$ centered at $v_0$. That is
$$i_{(G,Z)}(n)=\sharp \{gZ \mid g\in G, \; gZ\cap \ball_{v_0}(n)\neq \emptyset\}.$$
 In a similar way, we denote by $e_{(G,Z)}(n)$ the number of elements of the $G$-orbit of $Z$ {\it embedded} in the ball of radius $n$. That is $$e_{(G,Z)}(n)=\sharp \{gZ \mid g\in G, \; gZ\subseteq \ball_{v_0}(n)\}.$$
In order to count infinite subgraphs, we will need some mild convexity properties, which we define below. 

\begin{defn}\label{def:proj}
Let $\ga$ be a graph and $Z$ a subgraph. 
For $v\in V\ga$, the closest point projections of $v$ onto $Z$ is denoted by  $$\pi_{Z}(v)\coloneq\{z\in VZ \mid \dist(v,z)=\dist(v, VZ)\}.$$ 

The set $Z$ has {\it $\cte$-fellow projections} if for every $u,v\in V\ga$ with $\d(u,v)=1$ and every $z_v\in \pi_Z(v)$ there exists $z_u\in \pi_Z(u)$ such that $\d(z_v,z_u)\leq \cte$.

The set $Z$ has {\it $\cte$-bounded projections} if for every $u,v\in V\ga$ with $\d(u,v)=1$ the diameter of  $\pi_Z(u)\cup \pi_Z(v)$ is bounded above by $\cte$.
\end{defn}

Note that $\cte$-bounded projections implies $\cte$-fellow projections, and that any finite subgraph $Z$ has both properties.  
We will see other examples with these properties in Section \ref{sec:ex}, including quasi-convex subgraphs of hyperbolic graphs, parabolic subgroups of relatively hyperbolic groups, parabolic subgroups of raags and standard parabolic subgroups of Coxeter groups.

Fix a vertex $v_0$. Let $g\in G$. A {\it $gZ$-geodesic path} is a path from $v_0$ to $gZ$ that realizes the distance $\dist(v_0,gZ)$. We will say that a path $p$ is a {\it $(G,Z)$-geodesic path} if it is a $gZ$-geodesic path for some $g\in G$. Let $g_{(G,Z)}(n)$ be the number of $(G,Z)$-geodesics of length $\leq n$. That is
$$g_{(G,Z)}(n)=\sharp\{p \mid \exists gZ\, \text{ s.t.} p \text{ is geodesic path  from $v_0$ to $gZ$ and }\ell(p)=\dist(v_0,gZ)\}.$$ 

We will introduce the relative falsification by fellow traveler property in Section \ref{sec:paths}. The following, is a particular case of our main result that will be stated in Theorem \ref{thm:mainA+}.
\begin{thmx}\label{thm:mainA}
Let $\ga$ be a locally finite graph with  the falsification by fellow traveler property. 
Let $G\leqslant \Aut(\ga)$ be a vertex-transitive subgroup and  $Z$ a graph with $G$-proper embeddings. If $Z$ has fellow projections, then
\begin{enumerate}
\item[(1)] the $(G,Z)$-geodesic growth function,  $\sum_{n\geq 0} g_{(G,Z)}(n) t^n \in \mathbb{Z}[[t]]$, and
\item[(2)] the $(G,Z)$-embeddings growth function,  $\sum_{n\geq 0} e_{(G,Z)}(n) t^n \in \mathbb{Z}[[t]]$,
\end{enumerate}
are rational functions, and moreover, if $Z$ has  bounded projections, then  
\begin{enumerate}
\item[(3)] the $(G,Z)$-intersections growth function,  $\sum_{n\geq 0} i_{(G,Z)}(n) t^n \in \mathbb{Z}[[t]]$,
\end{enumerate}
is rational.
\end{thmx}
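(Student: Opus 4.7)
The plan is to carry Cannon's cone-type method, as refined by Neumann--Shapiro and by Calegari--Fujiwara, into the relative setting: construct a finite-state automaton whose accepted language is in bijection with the $(G,Z)$-geodesics. Once the automaton is built, (1) follows from the standard transfer-matrix formula, and (2)--(3) by decorating or pruning it and invoking closure of rational generating functions under the usual language-theoretic operations. Concretely, I would fix a finite alphabet $A$ labelling the oriented edges at each vertex (possible by vertex-transitivity and local finiteness) so that every path based at $v_0$ is encoded by a word in $A^*$. To each pair $(v,gZ)$ for which $v$ is the endpoint of a $gZ$-geodesic from $v_0$, attach its \emph{relative cone}: the set of words $w\in A^*$ whose induced path from $v$ concatenates with the existing prefix to give a longer $gZ$-geodesic. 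Equivalence classes of pairs under ``same relative cone'' will be the states, one-letter extensions will be the transitions, and accept states will be the classes with $v\in gZ$.

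The heart of the argument is finiteness of relative cone types. The fftp with constant $\ctee$ says that whether a word continues a $\ga$-geodesic from $v$ depends only on the isomorphism type of the ball $\nei{v}{\ctee}$, which takes finitely many $G$-orbit values by vertex-transitivity and local finiteness. To upgrade ``$\ga$-geodesic'' to ``$gZ$-geodesic'' one must remember how $gZ$ sits near $v$, and fellow projections provide exactly this: if $z_v\in\pi_{gZ}(v)$ is fixed then for each neighbour $u$ there is $z_u\in\pi_{gZ}(u)$ with $\d(z_u,z_v)\leq\ctee$, so carrying the data $(\nei{v}{\ctee},\,\pi_{gZ}(v)\cap\nei{v}{R})$ for a uniform $R$ depending on $\ctee$ is both local and self-updating. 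Vertex-transitivity collapses the ball types to finitely many orbits, $G$-proper embeddings bounds how many translates $gZ$ can contribute that projection data, and local finiteness controls the projection itself; together, these produce finitely many equivalence classes and a well-defined transition function. The finite automaton then delivers (1).

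For (2), which is vacuous when $Z$ is infinite, I would decorate the automaton by a finite counter that records, at each step, the maximum $\ga$-distance from $v_0$ of any vertex of $gZ$ already visited (finite-state because $Z$ is finite); quotienting by the finite group of label symmetries of $Z$ and restricting the counter to be at most $n$ gives a rational series, whence (2). For (3) the difficulty is that many $(G,Z)$-geodesics can target the same orbit translate $gZ$, so one needs to cut the language down to a regular sublanguage selecting a single representative geodesic per $gZ$; bounded projections guarantees that all such endpoints on a common $gZ$ lie in a set of diameter at most $\ctee$, so a standard lex-first pruning of the automaton produces a regular sublanguage in bijection with $\{gZ:gZ\cap\nei{v_0}{n}\neq\emptyset\}$. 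The main obstacle lies precisely in this last finite-to-one step: with only fellow projections the endpoints on a common $gZ$ can be unboundedly far apart and the pruning would not preserve regularity, which is exactly why (3) requires the stronger hypothesis of bounded projections, while (1) and (2) do not.
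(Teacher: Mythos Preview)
Your outline has the right spirit but a genuine gap in the construction of the automaton. You propose states carrying the data $(\nei{v}{\ctee},\,\pi_{gZ}(v)\cap\nei{v}{R})$, and you claim fellow projections makes this ``local and self-updating''. But along most of a $gZ$-geodesic the vertex $v$ is far from $gZ$, so $\pi_{gZ}(v)$ lies well outside $\nei{v}{R}$ and the second component is empty; your state then carries no information about $gZ$ at all, and you cannot decide when the path has reached $gZ$ or whether it is still a $gZ$-geodesic prefix. Fellow projections says projections move slowly along the path, not that they are close to the path. So the claimed finiteness of relative cone types via this state data is not justified.

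The paper's approach avoids tracking $gZ$ along the path entirely. One builds only the \emph{ordinary} fftp automaton, whose states $\phi_w$ record the $\ctee$-type of $wv_0$ (the function $u\mapsto \d(v_0,wuv_0)-\d(v_0,wv_0)$ on $X^{\leq\ctee}$). The role of fellow projections is a single lemma at the endpoint: if a path $p\in\cP$ from $v_0$ to $Z$ has $\ell(p)>\d(v_0,Z)$, then $p$ fellow-travels with a shorter path to $Z$ whose endpoint is within $\ctee$ of $p_+$. Since the finitely many translates $Z_1,\dots,Z_n$ containing $v_0$ give, after translation by $w$, all translates through $wv_0$, whether $p_w$ is a $(G,Z)$-geodesic is read off from $\phi_w$ alone: it holds iff some $Z_i$ has $\phi_w(u)\ge 0$ for every $u$ with $wuv_0\in wZ_i$. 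Thus (1) is obtained by merely \emph{selecting accepting states} in the fftp automaton, not by enlarging it.

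For (2) and (3) the paper does not prune to a bijective sublanguage. Instead it puts weights on the fftp digraph to obtain a Markov random geodesic combing, which yields rationality of the vertex-counting series $\sum_n e^{[v]}_{(G,\bullet)}(n)t^n$ and $\sum_n \ol{e}^{[v]}_{(G,\bullet)}(n)t^n$ for each $\ctee$-type $[v]$. One then writes $e_{(G,Z)}(n)$ and $i_{(G,Z)}(n)$ as finite $\mathbb{Q}$-linear combinations of these, with coefficients determined by the $\ctee$-type (how many translates of $Z$ sit at a vertex of that type, how many project there, etc.); bounded projections is what makes the overcount in the $i_{(G,Z)}$ case readable from the $\ctee$-type. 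Your lex-first pruning idea for (3) is closer to what the paper does later for shortlex coset transversals, but there it needs shortlex automaticity, not just fftp, to guarantee the fellow-travel required for the pruning to preserve regularity.
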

Particular cases previously known of Theorem  \ref{thm:mainA} are: 
(1) for Cayley graphs with the falsification by fellow traveler property and $Z=\bullet$ in \cite{NS}; 
and (2) for $\ga$ hyperbolic and finite graph $Z$ in \cite{CalFuj}; 
using a stronger version of this theorem (Theorem \ref{thm:mainA+}) one recovers (2) for geodesically automatic Cayley graphs and $Z$ finite, which was proved  in \cite{EpsteinGrowth}.

Observe that  Theorem \ref{thm:mainA} can be used to understand the geometry of Schreier coset graphs. 
Indeed, suppose that $\ga$ is a Cayley graph of a group $G$ with respect to some generating set $X$. 
If $Z$ is (the subgraph spanned by) a subgroup, then it has $G$-proper embeddings and $i_{(G,Z)}(n)$  counts how many left cosets of $Z$ intersect non-trivially a ball of radius $n$ in the Cayley graph.
If the generating set $X$ is symmetric, then the previous number is also equal to the number of right cosets meeting the ball of radius $n$, which in turn, is the number of vertices in the ball of radius $n$ of the Schreier coset graph $\ga(G,Z,X)$. 
We will prove

\begin{thmx}\label{thm:mainB}
Let $G$ be a group and $X$ a finite symmetric generating set of $G$.
Suppose that the Cayley graph $\ga(G,X)$ has the falsification by fellow traveler property.
Let $H\leqslant G$ be a subgroup of $G$ such that $H$ (as a subgraph) has fellow projections in $\ga(G,X)$.  
Then:
\begin{enumerate}
\item the Schreier coset graph $\ga(G,H,X)$ has the falsification by fellow traveler property relative to the family of all paths starting at the coset $H$;
\item the set of words $$\geol(H\backslash G,X)=\{w\in X^* \mid \ell(w)\leq \ell(u) \, \forall u\in X^*, u\in_G  Hw\}$$ is a regular language;
\item if moreover, $H$ has  bounded projections, then for  the Schreier coset graph $\ga(G,H,X)$, the function $\sum_{n\geq 0}e_{\bullet}(n)t^n$ is rational. 
\end{enumerate}
\end{thmx}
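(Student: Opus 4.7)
The plan is to derive Theorem~\ref{thm:mainB} from Theorem~\ref{thm:mainA} (and its enhanced form Theorem~\ref{thm:mainA+}) applied to the Cayley graph $\Gamma(G,X)$, with $G$ acting on itself by left multiplication and with $H$ playing the role of the subgraph $Z$. The central tool is the canonical quotient $\pi\colon \Gamma(G,X)\to\Gamma(G,H,X)$ induced by the left action of $H$: it is $1$-Lipschitz, paths from the coset $H$ lift uniquely to paths from $1$ in the Cayley graph with the same label and length, and $\d_{\Gamma(G,H,X)}(Hx,Hy)=\min_{h\in H}\d_{\Gamma(G,X)}(x,hy)$. Because $X=X^{-1}$, the map $g\mapsto g^{-1}$ is a length-preserving bijection between left and right cosets of $H$.

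Parts~(3) and~(2) then follow as dictionary translations. For~(3), the bijection above identifies $e_\bullet(n)$ in $\Gamma(G,H,X)$ (that is, the number of right cosets of $H$ meeting the ball of radius $n$ at $H$) with $i_{(G,H)}(n)$ in $\Gamma(G,X)$, so the bounded projections hypothesis on $H$ and Theorem~\ref{thm:mainA}(3) yield rationality. For~(2), a word $w$ lies in $\geol(H\backslash G,X)$ iff $w$ is shortest in its right coset $Hw$, iff (using $X=X^{-1}$ and $\ell_X(w)=\ell_X(w^{-1})$) $w^{-1}$ is shortest in the left coset $w^{-1}H=(Hw)^{-1}$, iff $w^{-1}$ is the label of a $(G,H)$-geodesic path in $\Gamma(G,X)$. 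The stronger Theorem~\ref{thm:mainA+} will provide regularity of the language of $(G,H)$-geodesic words, and regular languages are closed under reversal, so $\geol(H\backslash G,X)$ is regular.

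The substantive content is in~(1). Given a non-geodesic path $p$ from $H$ to $Hg$ of length $n$ in $\Gamma(G,H,X)$, I lift it to $\tilde p=(v_0,\ldots,v_n)$ in $\Gamma(G,X)$ with $v_0=1$ and $v_n=g'\in Hg$. If $\tilde p$ is non-geodesic in the Cayley graph, the fftp of $\Gamma(G,X)$ produces a fellow-traveling Cayley-geodesic from $1$ to $g'$, which projects under $\pi$ to a shorter path from $H$ to $Hg$ that fellow-travels $p$. Iterating, I may assume $\tilde p$ is itself a Cayley-geodesic, while some $g_*\in Hg$ satisfies $\ell(g_*)=k<n=\ell(g')$. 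Using the fellow projections of $H$ I choose $h_t\in\pi_H(v_t)$ with $h_0=1$ and $\d(h_t,h_{t+1})\le\cte$, and set $g_t\coloneq h_t^{-1}v_t$; then $g_t$ is a shortest representative of the coset $Hv_t=p(t)$, in particular $\ell(g_n)=k$, and any Cayley-geodesic $\tilde q$ from $1$ to $g_n$ projects to a Schreier-geodesic $q$ from $H$ to $Hg$ of length $k$. Fellow traveling of $p$ with $q$ in $\Gamma(G,H,X)$ is verified pointwise via $\d_{\Gamma(G,H,X)}(p(t),q(\min(t,k)))\le \d_{\Gamma(G,X)}(g_t,\tilde q(\min(t,k)))$, controlled by the fellow projections of $H$ combined with a further application of fftp in $\Gamma(G,X)$. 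The main obstacle I anticipate is the bookkeeping: although $h_t$ moves by bounded Cayley-steps, the normalized sequence $g_t = h_t^{-1}v_t$ has jumps $g_t^{-1}g_{t+1}=v_t^{-1}(h_th_{t+1}^{-1})v_t\cdot x$ involving conjugation by $v_t$ and so need not be uniformly bounded in the Cayley metric. Consequently the estimates must be carried out directly in the Schreier metric, where $Hg_t=p(t)$ already by construction; this is exactly the content of the notion of ``relative fftp'' introduced in Section~\ref{sec:paths}, and extracting uniform constants through its two chained applications is the delicate technical point.
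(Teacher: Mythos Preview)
Your treatment of parts~(2) and~(3) matches the paper's: both are reduced, via the left/right coset bijection $g\mapsto g^{-1}$ and closure of regular languages under reversal, to Theorem~\ref{thm:mainA+}(1) and~(3) applied to $Z=H$ in the Cayley graph.

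For part~(1), however, your argument has a genuine gap in the ``hard'' case where the lift $\tilde p$ is already a Cayley geodesic from $1$ to $g'$ but $|g'|_X=n>k=\d_{\Gamma(G,H,X)}(H,Hg)$. You produce a shortest coset representative $g_n=h_n^{-1}g'$ and take $\tilde q$ a Cayley geodesic from $1$ to $g_n$, then project. But $\tilde p$ and $\tilde q$ are Cayley geodesics with the same initial point and endpoints that differ by $h_n\in H$, an element of possibly large $X$--length; there is no reason for them to fellow travel in $\Gamma(G,X)$, and the $1$-Lipschitz projection does not help. Your interpolating sequence $(g_t)$ does record the Schreier vertices $p(t)=Hg_t$, but that is tautological---it just reparametrises $p$---and, as you yourself note, the Cayley increments $g_t^{-1}g_{t+1}$ involve conjugation by $v_t$ and are unbounded. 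So the proposed bound $\d_{\Gamma(G,H,X)}(p(t),q(\min(t,k)))\le \d_X(g_t,\tilde q(\min(t,k)))$ is correct but useless: the right-hand side is not controlled. The sentence ``the estimates must be carried out directly in the Schreier metric'' names the difficulty without dissolving it.

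The paper avoids this entirely with a reversal trick. If $p$ has label $w$, one considers instead the path $p_{w^{-1}}$ in $\Gamma(G,X)$ from $1$ to the left coset $w^{-1}H$; since $p$ is not a Schreier geodesic, $p_{w^{-1}}$ is not a $(G,H)$-geodesic. Now Lemma~\ref{lem:fftpcoset} (which packages fftp together with fellow projections of $w^{-1}H$) yields in one stroke a shorter path $p_u$ with $uH=w^{-1}H$, endpoint within $\cte$ of $w^{-1}$, and asynchronously $\cte^2$-fellow traveling $p_{w^{-1}}$. Reversing and left-translating by $w$ turns this into $p_w$ fellow traveling $h^{-1}p_{u^{-1}}$ (where $uh=w^{-1}$), and projecting by the $1$-Lipschitz map $\Gamma(G,X)\to\Gamma(G,H,X)$ finishes. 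The point is that after reversal the problem becomes ``shorten a path \emph{to} a coset'', for which the fellow-projection hypothesis is exactly tailored; your forward approach asks instead to shorten a path \emph{from} a coset, where the hypothesis gives no direct leverage.
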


The Schreier graphs for hyperbolic groups relative to quasi-convex subgroups were studied in \cite{Kapovich} by I. Kapovich. 
There it is proved that if $H$ is an infinite index, quasi-convex subgroup of a non-elementary hyperbolic group $G$, then $\ga(G,H,X)$ is non-amenable. 
A number of consequences are derived from this fact and, in particular, bounds on the co-growth rate are obtained. 

Here, we explore  the growth rate of these Schreier graphs. Using ideas of  \cite{EpsteinGrowth} we obtain
\begin{thmx}\label{thm:mainC}
Let $G$ be hyperbolic, $H$ a quasi-convex subgroup and $X$ a finite symmetric generating set of $G$.
Let $\ga=\ga(G,H, X)$ be the Schreier coset graph  and let $e_{\bullet}(n)=|V\ball_{\ga}(n)|$ be the number of vertices in the ball of radius $n$ centered at $H$ in $\ga$.

There exists a polynomial $Q_X(t)$ depending on $(G,X)$ and a constant $\lambda > 1$ depending only on $G$ such that the following hold:


(1) $Q_X(t)\sum_{n\geq 0} e_{\bullet} (n)t^n$ is a polynomial;

(2) if $G$ is non-elementary and $H$ is of infinite index, then 
$$\limsup \sqrt[n]{e_{\bullet}(n)}\geq \lambda.$$ 
\end{thmx}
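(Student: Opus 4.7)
For part (1) the plan is to verify that $(G,X,H)$ meets the hypotheses of Theorem \ref{thm:mainB}(3) and then track the denominator through the construction. Since $G$ is hyperbolic, $\ga(G,X)$ has the falsification by fellow traveler property by Neumann--Shapiro. The quasi-convexity of $H$ readily implies that $H$, viewed as a subgraph of $\ga(G,X)$, has bounded projections: this is the standard coarse stability of closest-point projections onto a quasi-convex subset of a hyperbolic space, with constant depending only on the hyperbolicity constant of $G$ and the quasi-convexity constant of $H$. Hence $\sum e_\bullet(n)t^n$ is rational. To see that the denominator can be chosen uniformly in $H$, one inspects the proof of Theorem \ref{thm:mainA+}: the generating series is realised as $\vec v^{\,T}(I-tA)^{-1}\vec u$, where the transition matrix $A$ is read off a finite-state automaton whose states record cone-type data in the ambient Cayley graph and thus depend on $(G,X)$ alone. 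Only the initial and accepting vectors $\vec u,\vec v$ depend on $H$. Setting $Q_X(t)\coloneq \det(I-tA)$ yields the claim.

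For part (2) the strategy is to combine (1) with two external inputs. First, Kapovich's theorem asserts non-amenability of $\ga(G,H,X)$ whenever $G$ is non-elementary and $H$ is quasi-convex of infinite index; non-amenability of a bounded-valency graph forces exponential growth, so $\limsup\sqrt[n]{e_\bullet(n)}>1$ for each triple $(G,H,X)$. By (1) this exponential rate equals $1/r$ where $r$ is the smallest positive pole of $\sum e_\bullet(n)t^n$, so it is a reciprocal of a root of $Q_X(t)$. Hence for fixed $X$ the growth rates lie in a finite set and admit a minimum $\lambda_X>1$.

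To eliminate the dependence on $X$ I would invoke Koubi's uniform exponential growth for hyperbolic groups: there is $K=K(G)$ such that for every finite generating set $X$ of $G$ one can find $u,v\in G$ with $|u|_X,|v|_X\le K$ generating a free subgroup $F=\langle u,v\rangle$ of rank two. Since $H$ has infinite index and is quasi-convex, its limit set is a proper closed subset of $\partial G$, so after replacing $u,v$ by suitable conjugate powers one may arrange that the attracting and repelling fixed points of $u$ and $v$ lie outside $\partial H$. A ping-pong argument then produces exponentially many words in $\{u,v\}^{\pm 1}$ representing pairwise distinct cosets in $G/H$; translating from $F$-length to $X$-length via $|u|_X,|v|_X\le K$ gives a growth rate depending only on $G$.

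The main obstacle is this last step: carrying out the ping-pong uniformly in both $X$ and $H$, while certifying that the distinct $F$-elements really do give distinct cosets $gH$. The boundary-avoidance of $\partial H$ is the central geometric input, and the delicate point is to control the $X$-length inflation incurred when replacing $u,v$ by conjugate powers to obtain the required ping-pong dynamics, so that the final exponent still depends only on $G$.
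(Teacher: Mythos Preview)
There is a genuine gap in your argument for part (1). You assert that the transition matrix $A$ in the proof of Theorem~\ref{thm:mainA+} depends only on $(G,X)$, with only the boundary vectors depending on $H$. But the general construction does not give this: the fftp automaton is built with parameter $M^2$, where $M$ must dominate both the fftp constant of $\Gamma(G,X)$ \emph{and} the bounded-projections constant of $H$ (see Hypothesis~\ref{hyp:fftp} and \S\ref{sec:proofA}). By Lemma~\ref{lem:hyper->boundedproj} that constant depends on the quasi-convexity constant of $H$, which is unbounded over all quasi-convex subgroups. So the automaton --- and hence $A$ and $\det(I-tA)$ --- a priori varies with $H$. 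The paper bridges this with Lemma~\ref{lem:Epstein} (an idea going back to \cite{EpsteinGrowth}): in a $\delta$-hyperbolic graph, any two $Z$-geodesics from a vertex synchronously $M(\delta)$-fellow travel along all but their final $R=R(D,\delta)$ steps. This lets one write $\bar i_{(G,H)}(n)$, for $n>R$, as an $H$-dependent linear combination of the functions $\bar e^{\sigma}_{(G,\bullet)}(n-R)$, where $\sigma$ now ranges over $M(\delta)$-types. Since $M(\delta)$ depends only on $(G,X)$, so do the denominators $Q_\sigma(t)$ of Theorem~\ref{thm:Markov}; the $H$-dependence is confined to the numerator coefficients and the shift by $R$.

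For part (2), your root-set argument does give a bound $\lambda_X>1$ once (1) is corrected, but the paper avoids your delicate ping-pong step entirely. Using \cite[Proposition~3.5]{WiseFuter} together with the decomposition above, the growth rate of the Schreier graph equals $\max\{\rho_{\mathbf A},1\}$, where $\rho_{\mathbf A}$ is the Perron--Frobenius eigenvalue of the matrix from Theorem~\ref{thm:Markov} --- a quantity depending only on $(G,X)$. Kapovich's theorem (which in fact produces a free rank-two subgroup $F\leqslant G$ with $H^g\cap F=\{1\}$ for all $g$) forces exponential growth, hence $\rho_{\mathbf A}>1$. Now simply observe that the case $H=\{1\}$ identifies $\rho_{\mathbf A}$ with the growth rate of $\Gamma(G,X)$ itself, and Koubi's uniform exponential growth bounds this below by some $\lambda(G)>1$ independently of $X$. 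No dynamical avoidance of $\partial H$ is needed.
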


\section{Notations, conventions and definitions}\label{Sec:not}
In this paper, a {\it  digraph} $\ga$ is a 4-tuple $(V\ga, E\ga, (\cdot)_-,(\cdot)_+)$, where $V\ga$ is a non-empty set whose elements are called vertices, $E\ga$ is a set, whose elements are called oriented edges, $(\cdot)_-,(\cdot)_+\colon E\ga \to V\ga$ are functions that are called incidence functions. A {\it graph} is a digraph with an involution $(\cdot)^{-1}\colon E\ga\to E\ga$  satisfying that for all $e\in E\ga$, $e_-^{-1}=e_+$ and $e_+^{-1}=e_-$.

A combinatorial path $p$ in a digraph $\ga$  is a sequence $v_0,e_1,v_1,e_2,\dots, e_n,v_n$ where $v_i\in V\ga$, $e_i\in E\ga$ and $(e_i)_-=v_{i-1}$  and $(e_i)_+=v_{i}$. The length of the path $p$ is denoted by $\ell(p)$ and is the number of edges in the sequence.  We extend the adjacent functions to paths, setting $p_-=v_0$ and $p_+=v_n$.  We write $p(i)$ to denote $v_i$ and if $p$ is a path of length $n$, we use the convention that $p(k)=p_+$ for all $k\geq n$. If $\ga$ is a graph, we also extend $(\cdot)^{-1}$ to paths, being $p^{-1}=v_n,e_n^{-1},\dots, e_1^{-1},v_0$.

The combinatorial distance between $u,v$ in $V\ga$, denoted $\d_\ga(u,v)$, is the infimum of the length of the combinatorial paths $p$ with $p_-=u$ and $p_+=v$. A path realizing the combinatorial distance between two vertices is a {\it geodesic.}

In a graph, one should think $e$ and $e^{-1}$ as a single un-oriented edge  and view $\ga$ as metric space that topologically a 1-dimensional CW-complex in which 0-cells are the elements of $V\ga$ and $1$-cells are elements of  $E\ga/(\cdot)^{-1}$, where $[e]=\{e,e^{-1}\}$ is attached to $e_-$ and $e_+$. The metric arises by making each 1-cell isometric to  the interval $[0,1]$ of the real line.  With this setting the combinatorial distance agrees with the induced metric on vertices.

Let  $\lambda \geq 1$ and $c\geq 0$. 
A path $p$ is a $(\lambda, c)$-{\it quasi-geodesic} 
if for any subpath $q$ of $p$ we have 
$$\ell ( q )\leq \lambda \d(q_{-},q_{+})+c.$$
Let $p,q$ be paths in $\ga$  and $\cte\geq 0$. 
We say that $p,q$ {\it asynchronously $\cte$-fellow travel}
if there exist non-decreasing functions $\phi\colon \N\to \N$
and $\psi \colon \N \to \N$ such that
$\d(p(t),q(\phi(t)))\leq \cte$ and $\d(p(\psi(t)),q(t))\leq \cte$ for all $t\in \N$. 
We say that $p,q$ {\it synchronously $\cte$-fellow travel}
if  $\d(p(t),q(t))\leq \cte$ for all $t\in \N$.

Let $G$ be a group, $H$ a subgroup and $X$ a  generating set for $G$. 
The {\it Schreier coset digraph for $G$ relative to  $H$ with respect to $X$} is 
a graph $\ga(G,H,X)$ that has vertex set $H\backslash G$ and edges $(H\backslash G)\times X$ where $(Hg,x)$ is an oriented edge from $Hg$ to $Hgx$ with label $x$. 
If $X$ is symmetric, then $\ga(G,H,X)$ is a graph since there is an edge $(Hgx,x^{-1})$ that we define to be $(Hg,x)^{-1}$.
The  {\it Cayley digraph} of $G$ with respect to $X$ is  $\ga(G,\{1\},X)$  and we just write $\ga(G,X)$. In the case that $X$ is symmetric we say that $\ga(G,X)$ is the Cayley graph.

We use the metric on the Cayley graph to define the length of  $g\in G$ as 
$|g|_X\coloneq \d_X(1,g)$.

\subsection{Regular languages}
\label{sec:regular}
 
A {\it finite state automaton} is a 5-tuple $(\cS,\cA,s_0,X,\tau)$, where $\cS$ is a set whose elements are  called {\it states}, $\cA$ is a subset of $\cS$ of whose states are called {\it accepting states}, a distinguished element  $s_0\in \cS$ called {\it initial state}, a finite set $X$ called the {\it input alphabet} and a function $\tau\colon \cS\times X\to \cS$ called {\it the transition function}.

Let $X$ be a set. We denote by $X^*$ the free monoid generated by $X$. 
Given a non-negative integer $\ctee$, we denote by $X^{\leq \ctee}\subset X^*$ the set of words in $X$ of length at most $\ctee$.

We extend $\tau$ to a function $\tau\colon \cS\times X^*\to \cS$ recursively, by setting $\tau(s,wx)=\tau(\tau(s,w),x)$ where $w\in X^*$, $x\in X$ and $s\in \cS$. 

A {\it language} $\cL$ over $X$ is a subset of $X^*$, and $\cL$ is {\it regular} if there is a finite state automaton $(\cS,\cA,s_0,X,\tau)$
such that $$\cL=\{w\in X^* \mid \tau(s_0,w)\in \cA\}.$$

To a finite state automaton, we can associate a {\it rooted $X$-labeled digraph} $\Delta$, whose vertices are the set of states, the root is the initial state,  and edges are of the form $e=(s,x,\tau(s,x))$ where $e_-=s$ and $e_+=\tau(s,x)$ and the label is $x$. In particular, a word $w\in X^*$ codifies a path in $\Delta$ starting at the root. 

Suppose that $\rho\colon E\Delta \to \mathbb{R}$ is a function. We can extend $\rho$ to a function on paths in $\Delta$ starting at $s_0$ (and hence to words in $X$) by multiplying the values of each edge of the path, i.e if $p=v_0,e_1,v_1,e_2,\dots, v_n$ is path in $\Delta$ starting at the root, we define $\rho(p)=\prod \rho(e_i)$.

\section{Labelling paths in vertex-transitive graphs}\label{sec:paths}
Let $\ga$ be a locally finite, connected, vertex-transitive graph and let $v_0$ be a vertex of $\ga$.

\begin{defn}\label{defn:presentation}
A {\it presentation for the paths starting at $v_0$} is a pair $(X,\theta)$ where $X$ is a  set in bijection  with $(v_0)_-^{-1}=\{e\in E\ga \mid e_- =v_0 \}$,
via $x\mapsto e(x)$ and $e\mapsto x_e$, together with a map  $\theta\colon X\to \Aut(\ga)$ satisfying that for each $x_e\in X$, $\theta(x_e)(v_0)=e_+$.
\end{defn}

Let $X^*$ be the free monoid freely generated by $X$ and $S$ the subsemigroup of $\Aut(\ga)$ generated by $\theta(X)$.
Then $\theta$ extends to a surjective homomorphism $\theta\colon X^*\to S$. 
Let $T$ be the Cayley digraph of $X^*$ with respect to $X$. 
That is $VT=X^*$ and $ET=X^*\times X$ where $(w,x)\in ET$ starts at $w$ and ends at $wx$. 
The maps $w\in X^*=VT\mapsto \theta(w)(v_0)\in \ga$, and $(w,x)\in ET\mapsto \theta(w)(e(x))$ define a digraph homomorphism $\Theta\colon T\to \ga$, and this map is a covering in the following sense: 

{\bf Claim:} for every directed path $p$ in $\ga$ starting at $v_0$ there is a unique path $\widetilde{p}$ in $T$ starting at $1$ such that $\Theta(\widetilde{p})=p$.

Note that it follows from the claim that $\Theta$ is surjective on vertices. 
We will prove the claim by induction on $\ell(p)$, the length of $p$.
If $\ell(p)=1$ then the Claim follows from the definition of presentation of paths.

Now assume that $\ell(p)=n>1$ and the Claim has been established for shorter paths.
Let $f$ be the last edge of $p$ and $p_1$ the initial subpath of $p$ of length $n-1$. 
By induction there is a unique path $\widetilde{p_1}$ in $T$ starting at $1$ with $\Theta(\widetilde{p_1})=p_1$. 
Let $\widetilde{w_1}=(\widetilde{p_1})_+$ and $w_1=\theta(\widetilde{w_1})\in S$.
Then $w_1 v_0=f_-$. Since $w_1\in \Aut(\ga)$, there is some $e\in (v_0)_-^{-1}$ such that $(w_1)^{-1}(f)=e$.
By definition, there is a unique $x\in X$ such that $e(x)=e$ and  $\theta(x)(v_0)=e_+$ and therefore $w_1(e(x))=f$.
Setting $\widetilde{p}$ to be the path $\widetilde{p_1}$ followed by the edge $(\widetilde{w_1},x)$ we obtain the path of the claim. Note that the uniqueness of $\widetilde{p}$ follows from the uniqueness of $\widetilde{p_1}$ and $x$.\hfill\qed 


Since $T$ is a Cayley digraph of a free monoid  freely generated by $X$, directed paths starting at $1$ in $T$ correspond to words over $X$. Conversely, given a path in $\ga$ starting at $v_0$, the lift in $T$ gives a word in $X$ corresponding to the lifted path.
In summary, given $(X,\theta)$,  there is a bijection  between words over $X$ and paths starting at $v_0$ given on words as follows: for a  word  $w\equiv x_1\dots x_n$ in $X$ we assign the combinatorial path $p_w$ consisting on the sequence $v_0,e(x_1),\theta(x_1)(v_0), \theta(x_1)(e(x_2)), \theta(x_1x_2)(v_0),\dots, \theta(x_1\dots x_n)(v_0)$.

\begin{rem}\label{rem: presentation Cayley} 
When $\ga=\ga(G,X)$ is a Cayley graph of a group $G$ with respect to a symmetric generating set there is a canonical presentation of paths starting at $1_G=v_0$, we have the bijection $X\to (v_0)_-^{-1}$, $x\mapsto e(x)=(1,x)$, and $\theta:X\to \Aut(\ga)$, where $\theta(x)$ is the automorphism consisting on acting by $x$ on the left on the elements of $\ga(G,X)$. In this case, the digraph homomorphism $\theta\colon T\to \ga$ preserves the natural labeling on edges of $T$ and $\ga$. 

However, in general, we cannot use the labels of edges of $T$ and the map $\theta$ to define a labeling on the edges of $\ga$ since in general this labeling would not be $\gen{\theta (X)}$-invariant. If it were, then $\ga$ would be the Cayley digraph of $\gen{\theta(X)}$ with respect to $\theta(X)$, however  there are vertex-transitive locally-finite  graphs (for example Diestel Leader graphs \cite{EskinFisherWhyte}) that are not even quasi-isometric to Cayley graphs.
\end{rem}

For sake of notation, we will usually drop the $\theta$ and the function. Thus a word $w$ in $X$ can be seen as an element of $X^*$ or an element of $\Aut(\ga)$ under $\theta$ and we will write $wv_0$ instead of $\theta(w)(v_0)$. The meaning will be clear from the context.

Let us denote the words giving geodesic paths as
$$\geol(\ga,X,\theta)=\{w\in X^* \mid p_w \text{ is a geodesic path}\}.$$
%
%
%
%
%
In general, if $\cP$ if a collection of paths in $\ga$ starting at $v_0$, we denote by $\cL(\cP)=\cL(\cP,X,\theta)$ the language defined by the paths in $\cP$, i.e.
$$\cL(\cP)=\{w\in X^* \mid p_w\in \cP\}.$$

\subsection{Relative falsification by fellow traveler property}

\begin{defn}\label{def:fftp}
Let $\ga$ be a graph, $v_0$ a vertex, $\cP$ a family of paths in $\ga$ and $\ctee\geq 0$.

The family $\cP$ is {\it $v_0$-spanning } if for all $v\in V\ga$ there is a geodesic path $p\in \cP$ from $v_0$ to $v$. 

We denote by $\cP^+$ to the union of $\cP$ and the one-edge continuations of paths in $\cP$, i.e. $\cP^{+}=\cP\cup \{p,e,e_+\mid p\in \cP, e\in E\ga, e_-=p_+\}$.

We say that $\ga$ has the {\it falsification by $\cte$-fellow traveler property relative to $\cP$} ($\ga$ is $\cte$-fftp relative to $\cP$) if every path $p\in \cP^+$  asynchronously $\cte$-fellow travel with a path $q\in \cP$ with the same end points, and moreover if $p$ is not geodesic, then $\ell(q)< \ell(p)$.

%

If $\ga$ is $\cte$-fftp relative to the collection of all paths, we just say that $\ga$ is $\cte$-fftp.
\end{defn}

The original definition of the falsification by fellow traveler property  requires that the paths asynchronously $\cte$-fellow travel, however, it was observed by Elder \cite{Elder02} that the original definition is equivalent to the synchronous definition (up to increasing constants).

The main example of graphs with the falsification by fellow traveler property with respect to a spanning family of paths are Cayley graphs of groups with a generating set admitting a geodesically automatic structure.
 
\begin{ex}\label{ex:relativefftp}
Let $G$ be a group and $X$ a finite generating set for $X$. 
Recall that $\cL\subseteq X^*$ is a {\it geodesic automatic structure} if $\cL$ is a regular language that surjects onto  $G$ via the natural evaluation map, 
each word $w\in \cL$ labels a geodesic path in the Cayley graph $\ga(G,X)$ and there is a constant $\cte$, such that for every $w\in \cL$ and every $x\in X$, there exists $u\in \cL$ such that $wx=_G u$ and the paths labeled by $wx$ and $u$ synchronously $\cte$-fellow travel (see \cite{ECHLPT} for details on geodesic automatic structures).  Clearly $\cP=\{p_w\mid w\in \cL\}$ is a $1_G$-spanning family of paths and $\ga(G,X)$ has the falsification by fellow traveler property with respect to $\cP$.
\end{ex}

We now can state the full version of Theorem \ref{thm:mainA}.

\begin{thm}\label{thm:mainA+}
Let $\ga$ be a locally-finite graph and  $G\leqslant \Aut(\ga)$ be a vertex-transitive subgroup. 
Let $(X,\theta)$ be a presentation for paths in $\ga$ starting at a vertex $v_0$ with $\theta(X)\subseteq G$.
Suppose that $\ga$ has the  falsification by fellow traveler property relative to $\cP$, a $v_0$-spanning collection of paths  with $\cL(\cP)$  regular.
Let  $Z$ be a subgraph of $\ga$ with proper $G$-embeddings and fellow projections.  Then
\begin{enumerate}
\item $\{w\in X^*\mid p_w \text{ is a $(G,Z)$-geodesic}\}\cap \cL(\cP)$ is regular,
\item the $(G,Z)$-embeddings growth function $\sum_{n\geq 0} e_{(G,Z)}(n)t^n$ is a rational,
\item if $Z$ has bounded projections, then the $(G,Z)$-intersections function ${\sum_{n\geq 0} i_{(G,Z)}(n)t^n}$ is rational.
\end{enumerate}
\end{thm}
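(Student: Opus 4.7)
The plan is to follow Cannon's cone-type strategy, enhanced with a local ``projection decoration'' that records, to bounded precision, which $G$-translates of $Z$ are near the endpoint of a path; all three statements will then be extracted from one finite-state automaton plus minor variants.

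For each path $p_w$ starting at $v_0$ and ending at a vertex $v$, I would associate a data package $\mathcal{D}(w)$ consisting of (i) the ordinary Cannon-style cone type of $v$ used to recognise $\cL(\cP)$ via fftp relative to $\cP$, and (ii) for a constant $R$ depending only on the fftp and fellow-projections constants, the finite list of pairs $(gZ\cap \ball_{v}(R),\,\d(v_0,gZ)-\ell(w))$ ranging over $G$-translates $gZ$ meeting $\ball_{v}(R)$ with $|\d(v_0,gZ)-\ell(w)|\leq R$, the first coordinate remembered as a labelled subgraph of $\ball_{v}(R)$ intrinsic to $v$. Finiteness of the set of values of $\mathcal{D}$ combines three facts: finitely many cone types (from fftp plus regularity of $\cL(\cP)$); finitely many $G$-translates of $Z$ meeting a fixed ball (from $G$-proper embeddings, local finiteness and vertex-transitivity); and finitely many isomorphism types of $\ball_{v}(R)$. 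The fellow-projections property is what makes the transition $\mathcal{D}(w)\mapsto\mathcal{D}(wx)$ depend only on $\mathcal{D}(w)$ and $x\in X$: it propagates each $z_v\in \pi_{gZ}(v)$ to a nearby $z_u\in \pi_{gZ}(u)$ inside the updated ball, while fftp controls the cone-type transition. The resulting deterministic finite-state automaton $\mathcal{A}$ accepts exactly those states whose decoration contains some pair with second coordinate $0$, that is, those $w$ for which $p_w$ is a $gZ$-geodesic; intersecting its language with $\cL(\cP)$ yields part (1).

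For part (2), I would refine $\mathcal{A}$ so that each accepting state additionally remembers the $G$-class of the full pair $(gZ,v)$ up to isomorphism; since $e_{(G,Z)}(n)$ is trivially zero unless $Z$ is finite, we may take $R\geq\diam(Z)$, so the event that all of $gZ$ lies in $\ball_{v_0}(n)$ becomes detectable from the state of $\mathcal{A}$ together with the residual radius $n-\ell(w)$. Rationality of $\sum e_{(G,Z)}(n)t^n$ then follows by standard rational-series manipulations on weighted regular languages, the weights absorbing the universally bounded multiplicity coming from $|\mathrm{Stab}_G(Z)|$ and $|\pi_{gZ}(v_0)|$. For part (3), bounded projections forces $\pi_{gZ}(v_0)$ to have diameter at most $\cte$, so the map from $(G,Z)$-geodesics of length $\leq n$ to orbits $\{gZ:\d(v_0,gZ)\leq n\}$ has fibres of size bounded and locally determined by the cone-type state, and rationality of $\sum i_{(G,Z)}(n)t^n$ thus follows from that of the $(G,Z)$-geodesic generating function via an automaton-level quotient.

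The main obstacle, I expect, is pinning down a single constant $R$ for which the update rule $\mathcal{D}(w)\mapsto \mathcal{D}(wx)$ is genuinely local: one must ensure both that any $gZ$ becoming newly relevant to $\ball_{u}(R)$ after an edge extension is already detectable inside $\mathcal{D}(w)$ (forcing $R$ to exceed the fellow-projections constant by enough room), and that the cone-type component of $\mathcal{D}$ correctly predicts fftp-shortcuts for all one-step continuations. Choosing one $R$ to serve simultaneously for parts (1), (2) (requiring $R\geq\diam(Z)$), and (3) (requiring $R\geq\cte$) is the delicate bookkeeping at the heart of the proof, but once this is done, the rest is a faithful translation of Cannon's cone-type machinery to the $(G,Z)$-equivariant setting.
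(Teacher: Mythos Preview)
Your high-level architecture is close to the paper's, but there is a genuine gap in how you define and update the state $\mathcal{D}(w)$, and it is precisely at the point where you invoke fellow projections.

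The second coordinate in your decoration is $\d(v_0,gZ)-\ell(w)$. This is a \emph{global} quantity: it involves the distance from the fixed basepoint $v_0$ to an entire (possibly infinite) translate $gZ$. When a translate $gZ$ first enters the window $\ball_v(R)$ after an edge extension, you must initialise this coordinate, and nothing in $\mathcal{D}(w)$ or in the local picture around $v$ tells you what $\d(v_0,gZ)$ is. Your justification --- that fellow projections ``propagates each $z_v\in\pi_{gZ}(v)$ to a nearby $z_u\in\pi_{gZ}(u)$'' --- concerns projections of the \emph{moving} endpoint $v$ onto $gZ$, not the distance from $v_0$ to $gZ$; so it does not address the issue. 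In short, the transition rule you describe is not well defined on the data you record.

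The paper sidesteps this entirely. Its automaton (the fftp automaton) records only the function $\phi_w\colon u\mapsto \d(v_0,wuv_0)-\d(v_0,wv_0)$ for $u\in X^{\le M^2}$, i.e.\ relative distances from $v_0$ to \emph{all} vertices near $wv_0$, with no reference to $Z$. This has a manifestly local update rule. Fellow projections is then used, not for the transition, but for the \emph{acceptance criterion}: Lemma~5.1 shows that if $p_w$ ends in $gZ$ and is not a $gZ$-geodesic, then some point of $gZ$ within distance $M$ of $(p_w)_+$ is strictly closer to $v_0$. Hence ``$p_w$ is a $(G,Z)$-geodesic'' is detected by checking $\phi_w(u)\ge 0$ for all $u$ landing in one of the finitely many $wZ_i$ through $wv_0$. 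Your parts (2) and (3) are then handled, as you suggest, by expressing $e_{(G,Z)}$ and $i_{(G,Z)}$ as finite $\mathbb{Q}$-linear combinations of the per-$M$-type spherical growth functions $\ol{e}^{\sigma}_{(G,\bullet)}$, whose rationality comes from a Markov random geodesic combing built on the same automaton. So the fix is to move fellow projections from the transition step to the detection step, and to let the automaton carry only the $M$-type rather than any $Z$-specific global distance.
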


\section{The \fftp{} Automaton and the n-type directed graph}
 
Throughout this section  $\ga$ is a locally-finite, vertex-transitive graph. 
We start by fixing a vertex $v_0$ of $V\ga$ and $(X,\theta)$ a presentation for paths in $\ga$ starting at $v_0$.  

Let $\ctee\geq 0$. For each $x\in X$, $a\in X^{\leq \ctee}$ with $av_0\in \ball_{xv_0}(\ctee)$, and $b\in X^{\leq \ctee}$ we set $$\d^x(a,b)\coloneq \min\{\ell(p) \mid p_-=a v_0, \; p_+= xb v_0, \; p\subseteq \ball_{xv_0}(\ctee)\} \in [0,2\ctee].$$

\begin{defn}\label{defn:fftp-automaton}
Let $\ctee\geq 0$.  The {\it \fftp{}-automaton} for $(\ga,X,\theta)$ with parameter $\ctee\geq 0$ and accepting states $\cA$, is defined as follows:
\begin{enumerate}
\item[(A1)] the input alphabet is $X$,

\item[(A2)] the set of states $\cS$ consists of  a fail state $\{\varrho\}$ and states of the form  $\phi\colon X^{\leq \ctee}\to [-\ctee,\ctee]$,

\item[(A3)] a distinguished initial state  $\phi_0$ given by $\phi_0(w)=\d(v_0,wv_0)$ for $w\in X^{\leq \ctee},$

\item[(A4)] a transition map $\tau\colon \cS \times X \to \cS$ defined as 
$\tau(\varrho,x)=\varrho$, $\tau(\phi,x)=\varrho$ if $\phi(x)\neq 1$. Otherwise  $\tau(\phi,x)=\psi$ where for 
 $b\in X^{\leq \ctee}$
 $$\psi(b)=\min \{\phi(a)+\dist^x(a,b)-1 \mid a\in X^{\leq \ctee},\, av_0\in \ball_{xv_0}(\ctee)  \}$$
 and the fact that $\psi\in \cS$ will be shown in Proposition \ref{prop:states}, 
\item[(A5)] a subset $\cA$ of $\cS$ of accepting states.
\end{enumerate}

By the {\it fftp-digraph} we will refer to the digraph associated to the fftp automaton.
\end{defn}
In order to simplify the notation, given  $w\in X^*$, we will denote $\tau(\phi_0,w)\in \cS$ by $\phi_w$.

We now clarify the meaning  of the states of the fftp-automaton. 

\begin{prop}\label{prop:states}
Let $(X,\tau, \phi_0, \cS, \cA)$ be the \fftp{} automaton for $(\ga,X,\theta)$ with parameter $\ctee$.
 Then 	 $\phi_w\neq \varrho$ if and only if $p_w$ does not asynchronously $\ctee$-fellow travel with a shorter path with the same endpoints, moreover 
if $\phi_w\neq \varrho$, then for $u\in X^{\leq \ctee}$
\begin{equation}\label{eq:state}
\phi_w(u)=\min\{\ell(q)-\ell(p_w) \mid q_-=v_0, q_+=wuv_0 \text{ and } p_w,q \text{ asyn } \ctee-\text{fellow travel}\}.
\end{equation}
Note that in particular,  if $uv_0=u'v_0$ then $\phi_w(u)=\phi_w(u')$.
\end{prop}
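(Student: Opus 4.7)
The plan is to proceed by induction on $n=\ell(w)$, proving the $\varrho$-dichotomy and formula \eqref{eq:state} simultaneously. The base case $n=0$ is direct: $\phi_\epsilon=\phi_0$ is not $\varrho$, the trivial path $p_\epsilon=(v_0)$ is vacuously not asynchronously $\ctee$-fellow-traveled by any strictly shorter path, and the minimum in \eqref{eq:state} collapses to $\min\{\ell(q):q_-=v_0,\,q_+=uv_0\}=\d(v_0,uv_0)=\phi_0(u)$ for $u\in X^{\leq \ctee}$.

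For the inductive step, write $w=w'x$ with $x\in X$. If $\phi_{w'}=\varrho$, by the inductive hypothesis some strictly shorter $r$ asynchronously $\ctee$-fellow-travels $p_{w'}$; appending the edge at $w'v_0$ lifting $x$ produces a strictly shorter asynchronous fellow-traveler of $p_w$, giving $\phi_w=\varrho=\tau(\varrho,x)$. If $\phi_{w'}\neq\varrho$ but $\phi_{w'}(x)\neq 1$, then $\phi_{w'}(x)\leq 1$ via the trivial witness $p_{w'}\cdot e$, so $\phi_{w'}(x)\leq 0$; the IH-witness for $\phi_{w'}(x)$ is a strictly shorter path ending at $wv_0$ that asynchronously fellow-travels $p_{w'}$, and its fellow-travel functions extend trivially to $p_w$ since both already end at $wv_0$, giving $\phi_w=\varrho=\tau(\phi_{w'},x)$.

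The heart of the proof is the remaining case $\phi_{w'}\neq\varrho$ and $\phi_{w'}(x)=1$, where $\tau(\phi_{w'},x)=\psi$ with $\psi(b)=\min\{\phi_{w'}(a)+\d^x(a,b)-1\}$. I plan to prove two inequalities. For $\phi_w(b)\leq\psi(b)$, given any admissible $a$ I concatenate an IH-witness $q_1$ for $\phi_{w'}(a)$ with the $\theta(w')$-translate $q_2$ of a minimizing path for $\d^x(a,b)$ inside $\ball_{xv_0}(\ctee)$; the concatenation $q_1 q_2$ ends at $wbv_0$, has length $\ell(p_w)+\phi_{w'}(a)+\d^x(a,b)-1$, and asynchronously $\ctee$-fellow-travels $p_w$ because $q_1$ fellow-travels the prefix $p_{w'}$ and $q_2\subseteq \ball_{wv_0}(\ctee)$ sits uniformly close to $(p_w)_+$. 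For the reverse inequality, given any witness $q$ for the right-hand side of \eqref{eq:state}, I would split $q=q_1 q_2$ at the first time $s_0$ where the reverse asynchronous fellow-travel function from $q$ to $p_w$ reaches $\ell(p_w)$; this forces both $q(s_0)\in \ball_{wv_0}(\ctee)$ and $q|_{[s_0,\ell(q)]}\subseteq \ball_{wv_0}(\ctee)$. Writing $q(s_0)=w' a v_0$ for a suitably short word $a$ with $av_0\in \ball_{xv_0}(\ctee)$, the prefix $q_1$ asynchronously fellow-travels $p_{w'}$ by truncating the fellow-travel functions, so IH gives $\ell(q_1)\geq \ell(p_{w'})+\phi_{w'}(a)$, while the suffix lies inside $\theta(w')\ball_{xv_0}(\ctee)$, so the definition of $\d^x$ yields $\ell(q_2)\geq \d^x(a,b)$; summing gives $\ell(q)-\ell(p_w)\geq \phi_{w'}(a)+\d^x(a,b)-1\geq \psi(b)$. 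Finally $\phi_w\neq \varrho$ is obtained by plugging $b=\epsilon$ into the proved formula and invoking fftp, which forces $p_{w'}$ to be geodesic under $\phi_{w'}\neq\varrho$ and hence prevents the minimand from becoming negative; and the symmetry $\phi_w(u)=\phi_w(u')$ when $uv_0=u'v_0$ is immediate from \eqref{eq:state}.

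\textbf{Expected main obstacle.} The delicate step is producing a valid word $a$ for the split vertex $q(s_0)$ in the lower-bound half of the main case: $a$ must lie in $X^{\leq \ctee}$, $av_0$ must lie in $\ball_{xv_0}(\ctee)$, and the two pieces of the decomposition must asynchronously $\ctee$-fellow-travel $p_{w'}$ and sit inside $\ball_{wv_0}(\ctee)$ respectively, all controlled by the single constant $\ctee$. Because $(p_w)_+$ sits at distance exactly $1$ from $(p_{w'})_+$, these constraints compete, and reconciling them will likely require Elder's equivalence \cite{Elder02} between synchronous and asynchronous fellow-travel in order to synchronize the relevant fellow-travel indices cleanly.
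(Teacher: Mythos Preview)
Your inductive scheme and the two-inequality strategy for \eqref{eq:state} match the paper's, but there is a genuine gap in your final step. The proposition carries no fftp hypothesis: it is a statement about the automaton of Definition~\ref{defn:fftp-automaton} for an arbitrary locally finite vertex-transitive $\ga$. Invoking fftp to force $p_{w'}$ geodesic is therefore illegitimate, and in any case the implication ``$\phi_{w'}\neq\varrho\Rightarrow p_{w'}$ geodesic'' is precisely Proposition~\ref{prop:accept_geo}, whose proof uses the present result --- so your argument is circular. Even granting geodesicity of $p_{w'}$, that alone does not bound $\phi_{w'}(a)+\d^x(a,\epsilon)-1$ below by $0$. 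The paper handles this step directly and \emph{before} establishing the formula: if a shorter $q$ with $q_+=wv_0$ asynchronously $\ctee$-fellow-travels $p_w$, then (since $p_{w'}$ and $p_w$ differ only by one terminal edge) $q$ also $\ctee$-fellow-travels $p_{w'}$, and the inductive description of $\phi_{w'}(x)$ yields $1=\phi_{w'}(x)\leq \ell(q)-\ell(p_{w'})\leq 0$, a contradiction. No fftp and no geodesicity are needed.

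On your expected obstacle: the paper does not fix a fellow-travel function and read off $s_0$ from it. It takes $q_1$ to be the \emph{longest} initial subpath of $q$ that asynchronously $\ctee$-fellow-travels $p_{w'}$; with this choice ``$q_1$ fellow-travels $p_{w'}$'' is automatic, and maximality forces the first vertex beyond $q_1$ --- hence $(q_1)_+$ and all of $q_2$ --- into $\ball_{wv_0}(\ctee)$. Elder's synchronous/asynchronous equivalence is not invoked anywhere.
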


\begin{proof}
We prove the proposition by induction on $\ell(w)$, being the base of induction the case $\ell(w)=0$, where $p_w$ is the  path of length zero starting and ending at $v_0$ and $\phi_w$ is equal to $\phi_0$. 
Observe that the proposition holds in this case.

So assume that the proposition holds for $w$, and we have to show it for $wx$, where $x\in X$. 

Consider first the case $\phi_{wx}=\varrho$. We have two subcases. 
If $\phi_{w}=\varrho$, then by induction hypothesis, $p_w$ asynchronously $\ctee$-fellow travels with a shorter path from $v_0$ to $wv_0$ and so does $p_{wx}$ since  $p_{w}$ is a subpath of $p_{wx}$. 
If $\phi_{w}\neq \varrho$, then $\phi_w(x)\neq 1$ and by \eqref{eq:state} there is a path $q$ from $v_0$ to $wxv_0$ such that $p_w$ and $q$ asynchronously $\ctee$-fellow travel and $\ell(q)-\ell(p_w)<1$. It follows that $\ell(q)<\ell(p_w)+1=\ell(p_{wx})$ and $p_{wx}$ and $q$ asynchronously $\ctee$-fellow travel.

Consider now the case that $\phi_{wx}\neq \varrho$.
Then $\phi_w(x)=1$. 
Suppose that there is a path $q$ from $v_0$ to $wxv_0$ that $\ctee$-fellow travels with $p_{wx}$ and is shorter than $p_{wx}$. 
Then $\ell(q)-\ell(p_w)\leq 0$ and, $q$ also asynchronously $\ctee$-fellow travels with $p_w$. 
Since  $1=\phi_w(x)\leq \ell(q)-\ell(p_w)$ we get a contradiction. 

We now show  that \eqref{eq:state} holds for $\phi_{wx}$. Let $b\in X^{\leq \ctee}$.
By definition (A4), $\phi_{wx}(b)=\min \{\phi_w(a)+\dist^x(a,b)-1 \mid a\in X^{\leq \ctee},\, av_0\in \ball_{xv_0}(\ctee)\}$. 

Let $a\in X^{\leq \ctee}$ realizing the minimum in (A4). Then, by induction hypothesis, there exists a path $q_1$ from $v_0$ to $wav_0$ such that $\ell(q_1)-\ell(p_w)=\phi_{w}(a)$, $(q_1)_+\in \ball_{wxv_0}(\ctee)$ and $q_1$ and $p_{wx}$ asynchronously $\ctee$-fellow travel. 
By definition of $\dist^x$, there is a path $q_2$ from $wav_0$ to $wxbv_0$ of length $\dist^x(a,b)$ that asynchronously $\ctee$-fellow travel with $wxv_0$. 
Thus $q=q_1q_2$ asynchronously $\ctee$-fellow travels with $p_{wx}$ and $\ell(q)-\ell(p_{wx})=\ell(q_1)-\ell(p_{w})+\ell(q_2)-1$. 
We have shown that $\phi_{wx}(b)$ is greater or equal than the right-hand side of \eqref{eq:state}.

Let $q$ be a  path realizing the minium in the right-hand side of  \eqref{eq:state}. That is $q$ is a path of minimal length from $v_0$ to $wxbv_0$ that asynchronously $\ctee$-fellow travels with $p_{wx}$. 
Write $q$ as $q_1q_2$ where  $q_1$ be the longest initial subpath of $q$ that asynchronously $\ctee$-fellow travel with $p_w$, and $q_2$ might be an empty subpath. 
Note that $q_2\subseteq \ball_{wxv_0}(\ctee)$ and that $(q_1)_+\in \ball_{wxv_0}$. By the minimality of $q$ one has that $\ell(q_1)-\ell(p_w)=\phi_w(a)$ where $a\in X^{\leq \ctee}$ is such that $wav_0=(q_1)_+$ and $\ell(q_2)=\d^x(a,b)$. 
Thus we get that $\ell(q)-\ell(p_{wx})=\ell(q_1)+\ell(q_2)-\ell(p_{w})-1=\phi_w(a)+\dist^x(a,b)-1\geq \phi_{wx}(b)$. 
Thus $\phi_{wx}(b)$ is less or equal than the right-hand side of \eqref{eq:state}. 
This completes the proof.
\end{proof}

\begin{rem}
The transition function $\tau$ in Definition \ref{def:fftp} does not completely agree with the one of Neumann and Shapiro of \cite[Proposition 4.1]{NS}. 
Their transition function takes the minimum over the $a\in X^{\leq M}$ such that $\dist(av_0,xbv_0)\leq 1$ (which is equivalent to $\dist^x(a, b)\leq 1)$, and later it is claimed without proof that  Proposition \ref{prop:states} holds for that automaton. 
Our definition of the transition function for the fftp-automaton resembles more to the one of \cite[Lemma 3.7]{CalFuj} where essentially it describes the transition between different tournaments. 
We remark that it is not exactly equal because without some extra structure (such as hyperbolicity) we do not know that $\dist^x(a,b)$ agrees with $\dist(av_0,xbv_0)$.
\end{rem}

\subsection{Fftp graphs and the \fftp{}-automaton}
We will now show the power of this automaton when $\ga$ is a \fftp{} graph. 

\begin{hyp}\label{hyp:fftp}
Let $\ga$ be a vertex transitive graph, $v_0\in V\ga$, and $(X,\theta)$ a presentation of paths starting at $v_0$. 
Let $\cP$ be a $v_0$-spanning family of paths and assume that $\ga$ is $\ctee$-\fftp{} relative to $\cP$. 
Let $(X, \tau, \phi_0, \cS, \cA)$ be the \fftp{} automaton for $(\ga,X,\theta)$ with parameter $\ctee^2$.
\end{hyp}

The importance of $\ctee^2$ will be evident soon.
First, we need the following fact, whose proof consist of repeatedly using the definition.
\begin{lem}\label{lem:ftqg}
With Hypothesis \ref{hyp:fftp}.
Let $B\geq 1$.
Let $p$ be a concatenation of a geodesic path $p'$ in $\cP$ starting at $v_0$ and a path of length at most $B$ starting at $p'_+$. Then there exists a geodesic path $q$ in $\cP$ with the same endpoints as $p$ and such that $p$ and $q$ asynchronously  $B\cdot \ctee$-fellow travel.
\end{lem}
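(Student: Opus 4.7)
The plan is to proceed by induction on $B$. For the base case $B=1$, the path $p$ has the form $p'\cdot e$ for a single edge $e$, so $p\in \cP^+$. The hypothesis that $\ga$ is $\ctee$-fftp relative to $\cP$ then furnishes directly a path $q_1\in\cP$ with the same endpoints as $p$ that asynchronously $\ctee$-fellow travels with $p$. If $q_1$ is already geodesic this is the required $q$. Otherwise $q_1\in\cP\subseteq\cP^+$ is non-geodesic, so a further application of fftp supplies a strictly shorter $q_2\in\cP$ asynchronously $\ctee$-fellow with $q_1$, and one iterates. The triangle inequality bounds $\ell(q_1)-\d(v_0,p_+)\leq 2$ (since $p'$ is geodesic and differs from $p$ by one edge), so only boundedly many iterations are needed to reach a geodesic in $\cP$, and the cumulative fellow-traveling constant remains of order $\ctee$.

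For the inductive step, decompose the tail as $r=r_0\cdot e$ with $\ell(r_0)=B-1$ and $e$ a single edge. Applying the inductive hypothesis to $p'\cdot r_0$ yields a geodesic $q_0\in\cP$ from $v_0$ to $(p'r_0)_+$ that asynchronously $(B-1)\ctee$-fellow travels with $p'\cdot r_0$. Extending the reparametrizing functions $\phi,\psi\colon\N\to\N$ trivially across the shared final edge $e$, one checks that $p=p'r_0 e$ and $q_0\cdot e$ still asynchronously $(B-1)\ctee$-fellow travel. Since $q_0\cdot e\in\cP^+$, the relative fftp supplies $q\in\cP$ with the endpoints of $p$, asynchronously $\ctee$-fellow traveling with $q_0\cdot e$. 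Composing the two relations (constants add), $p$ asynchronously $B\cdot\ctee$-fellow travels with $q$, and one arranges $q$ to be geodesic by the same bounded iteration as in the base case.

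The hard part will be the bookkeeping for the fellow-traveling constants. One must verify that composition of asynchronous fellow-traveling relations produces an additive constant (this uses composing the $\phi$'s and the $\psi$'s, together with the triangle inequality on the pointwise estimates), that the reparametrizations can be legitimately extended across the common final edge $e$ without increasing the constant (here one just pins $\phi$ and $\psi$ to send the appended time to the appended endpoint), and that iterating fftp inside $\cP$ to reach a geodesic costs only a $B$-independent overhead. These points are routine consequences of the definitions combined with the triangle inequality, and with care the final fellow-traveling constant stays linear in $B$ as asserted in the statement.
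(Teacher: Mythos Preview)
Your inductive strategy—peel off one edge at a time, apply relative fftp, and compose the fellow-traveling constants—is exactly what the paper means by ``repeatedly using the definition,'' and the bookkeeping you outline (additivity of constants under composition of asynchronous fellow-traveling, extension of the reparametrizations across a shared terminal edge) is correct.

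There is, however, a gap in your ``bounded iteration'' step. When the one-edge extension $p'\cdot e$ (or $q_0\cdot e$ in the inductive step) happens to be geodesic, Definition~\ref{def:fftp} supplies a path $q_1\in\cP$ that $\ctee$-fellow travels with it but imposes \emph{no} upper bound on $\ell(q_1)$: the clause $\ell(q)<\ell(p)$ is asserted only when $p$ is non-geodesic. Your inequality $\ell(q_1)-\d(v_0,p_+)\le 2$ tacitly uses $\ell(q_1)\le\ell(p)$, which is unjustified in that case, so the number of subsequent fftp applications needed to reach a geodesic in $\cP$ is not controlled. The gap disappears in the two situations the paper actually uses—when $\cP$ is the family of all paths one may simply take $q=p'\cdot e$ itself, and when $\cP$ consists only of geodesics (Example~\ref{ex:relativefftp}) the first $q_1$ is already geodesic—but for a general $v_0$-spanning $\cP$ as allowed by Hypothesis~\ref{hyp:fftp} your argument as written is incomplete. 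A clean fix is to read the definition as also requiring $\ell(q)\le\ell(p)$ (which holds in all the examples and is clearly the intent); under that reading your induction goes through, though the constant comes out as $2B\ctee$ rather than $B\ctee$, a cosmetic discrepancy that does not affect any downstream statement.
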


\begin{prop}\label{prop:accept_geo}
With Hypothesis \ref{hyp:fftp}. Let $w\in \cL(\cP)$. Then $\phi_w\neq \varrho$ if and only if $p_w$ is geodesic. Moreover, if $\phi_w$ is not the fail state, then for all $u\in X^{\leq \ctee}$, 
\begin{equation}\label{eq:timedif}
\phi_w(u)=\d_\ga(v_0,wuv_0)-\d_\ga(v_0,wv_0).
\end{equation}
\end{prop}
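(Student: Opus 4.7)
My plan is to deduce Proposition \ref{prop:accept_geo} directly from Proposition \ref{prop:states} (applied to the \fftp{}-automaton with parameter $\ctee^2$), combined with the $\ctee$-fftp of $\ga$ relative to $\cP$ and with Lemma \ref{lem:ftqg}.

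For part (1), I would invoke Proposition \ref{prop:states}, which states that $\phi_w\neq \varrho$ if and only if $p_w$ does not asynchronously $\ctee^2$-fellow travel with a strictly shorter path having the same endpoints. If $p_w$ is a geodesic, no such shorter path from $v_0$ to $wv_0$ can exist, so $\phi_w\neq \varrho$. For the converse, if $p_w$ is not geodesic, the assumption $w\in \cL(\cP)$ places $p_w$ in $\cP\subseteq \cP^+$, and the $\ctee$-fftp of $\ga$ relative to $\cP$ furnishes $q\in \cP$ with the same endpoints as $p_w$, $\ell(q)<\ell(p_w)$, and asynchronously $\ctee$-fellow travelling (hence also $\ctee^2$-fellow travelling) with $p_w$. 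Proposition \ref{prop:states} then forces $\phi_w=\varrho$.

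For part (2), assume $\phi_w\neq \varrho$, so by (1), $p_w$ is geodesic and $\ell(p_w)=\d_\ga(v_0,wv_0)$. Proposition \ref{prop:states} expresses
$$
\phi_w(u)=\min\{\ell(q)-\ell(p_w) \mid q_-=v_0,\ q_+=wuv_0,\ p_w \text{ and } q \text{ asyn. } \ctee^2\text{-fellow travel}\},
$$
and the inequality $\phi_w(u)\geq \d_\ga(v_0,wuv_0)-\d_\ga(v_0,wv_0)$ follows at once, since every admissible $q$ satisfies $\ell(q)\geq \d_\ga(v_0,wuv_0)$. For the reverse inequality I would apply Lemma \ref{lem:ftqg} to the concatenation of $p_w$ (a $\cP$-geodesic starting at $v_0$) with the path of length $\ell(u)\leq \ctee$ traced by $u$ beginning at $wv_0$. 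The lemma outputs a geodesic $q\in \cP$ from $v_0$ to $wuv_0$ asynchronously $\ell(u)\cdot \ctee\leq \ctee^2$-fellow travelling with $p_w\cdot u$; a brief check with the stalling convention, using $\d((p_w)_+,q_+)\leq \ell(u)$, transfers this to asynchronous $\ctee^2$-fellow travelling between $p_w$ and $q$. Substituting this $q$ into the formula gives $\phi_w(u)\leq \d_\ga(v_0,wuv_0)-\d_\ga(v_0,wv_0)$, which closes the equality.

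The main subtlety I foresee is the last step: showing that asynchronous fellow travelling of $q$ with $p_w\cdot u$ descends to asynchronous fellow travelling of $q$ with $p_w$ within the same constant $\ctee^2$. This is precisely the reason the \fftp{}-automaton is built with parameter $\ctee^2$ rather than $\ctee$: the quadratic factor provides the slack needed to absorb the additive error $\ell(u)\leq \ctee$ incurred by truncating the $u$-tail.
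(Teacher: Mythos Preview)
Your proposal is correct and follows essentially the same route as the paper: both parts are deduced from Proposition~\ref{prop:states} together with the relative $\ctee$-fftp assumption and Lemma~\ref{lem:ftqg}, exactly as you outline. The paper's own proof is terser---it simply states that $p_{wu}$ asynchronously $\ctee^2$-fellow travels with a geodesic $q$ via Lemma~\ref{lem:ftqg} and that \eqref{eq:timedif} then follows from \eqref{eq:state}---whereas you make explicit the two inequalities and flag the passage from fellow-travelling with $p_{wu}$ to fellow-travelling with $p_w$; but the underlying argument is identical.
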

\begin{proof}
Let $w\in \cL(\cP)$. If $p_w$ is geodesic, then it cannot asynchronously fellow travel with a shorter path, and by Proposition \ref{prop:states}, $\phi_w\neq  \varrho$. If $p_w$ is not geodesic, by the definition of \fftp{} relative to $\cP$, there exists a shorter path that asynchronously $\ctee$-fellow travels with $p_w$ and thus $\phi_w=\varrho$.

Now assume that $\phi_w\neq \varrho$ and let $u\in X^{\leq \ctee}$. The path $p_{wu}$  is a concatenation of a geodesic $p_w$ and a path of length at most $\ctee$, and hence by Lemma \ref{lem:ftqg} it asynchronously $\ctee^2$-fellow travels with a geodesic $q$, and \eqref{eq:timedif} follows from \eqref{eq:state}.
\end{proof}

It follows from Proposition \ref{prop:accept_geo} that the fftp-automaton accepts those $w\in \cL(\cP)$ that are geodesic. 
Hence the language $\geol(\ga,X,\theta)\cap \cL(\cP)$ is regular (this will be a particular case of Corollary \ref{cor:regularZgeo}). 
Therefore, the generating function of the number of geodesic paths in $\cP$ is a rational function. 

We aim to use the fftp-automaton  not just to count geodesic paths of length $\leq n$ but also to count  the number of vertices in $\ball_{v_0}(n)$. 
It will be convenient to understand which states accept geodesics ending in the same vertex.

\begin{defn}[Cannon's $N$-type]
Let $G\leqslant \Aut(\ga)$. 

The {\it $N$-type} of a vertex $v$ of $\ga$ is the function $\kappa_v\colon \ball_v(N)\to [-N,N]$ given by  $$\kappa_v(u)=\dist(v_0,u)-\dist(v_0,v).$$

Two vertices $v$ and $v'$  of $\ga$ have the {\it same $N$-type  mod $G$}, if there is an automorphism $\alpha\in G$ of $\ga$, such that $\alpha(v)=v'$,  and
$$\d(v_0, u)-\d(v_0,v)=\d(v_0, \alpha(u))-\d(v_0,u')$$
for every $u\in V\ga$ with  $\d(v,u)\leq N$. 
\end{defn}
To have the same $N$-type mod $G$ defines an equivalence relation on the vertices of $\ga$, that we denote by $\sim_N$, and by {\it $N$-type mod $G$} of a vertex $v$ we refer to its equivalence class.

\begin{defn}
Let $G\leqslant \Aut(\ga)$ and $m\geq M$. 
Two functions $\phi,\phi'\colon  \ball_{v_0}(m)\to [-m,m]$ are {\it $\ctee$-equivalent mod $G$} if  there is  $\alpha\in G$  such that $\phi'\vert_{\ball_{v_0}(\ctee)}=(\phi \circ \alpha)\vert_{\ball_{v_0}(\ctee)}$, i.e. they agree in the ball of radius $\ctee$ up to the automorphism $\alpha$. 
\end{defn}
If $\phi$ and $\phi'$ are $\cte$-equivalent mod $G$, we write $\phi\sim_\cte\phi'$.

It follows from Proposition \ref{prop:states} that the states of the \fftp{}-automaton  $\phi_w\colon X^{\leq \ctee^2}\to [-M^2,M^2]$ induce a function $\ball_{v_0}(\ctee^2)\to [-M^2,M^2]$, $uv_0\mapsto \phi_w(u)$. 
For the sake of simplifying the notation, we will also denote this function by $\phi_w$.

%
%
%
%
%

The following will be key in the rest of the paper, since it allows us to read the $\cte$-type of a vertex $v$ at the state of fftp-automaton accepting a geodeic $p_w$ from $v_0$ to $v$.

\begin{lem}\label{lem:k-equivalent}
With Hypothesis \ref{hyp:fftp} and assuming that $\theta(X)\subseteq G$.
Let $w,w'\in X^*$ and suppose that $p_w$ (respectively $p_{w'}$) is a geodesic path from $v_0$ to $v$ (respectively $v$'). Then $\phi_w$ and  $\phi_{w'}$ are $\ctee$-equivalent mod $G$ if and only if $v$ and $v'$ have the same $\cte$-type mod $G$.

In particular, if $v=v'$ then $\phi_w$ and  $\phi_{w'}$ are $\ctee$-equivalent mod $G$.
\end{lem}

\begin{proof}
Suppose fist now that $v\sim_\cte v'$ mod $G$. Then, there is $\alpha\in G$ such that $\alpha(v)=v'$ and for all $u\in \ball_v(\ctee)$
$$\dist(v_0,u)-\dist(v_0,v)=\dist(v_0,\alpha(u))-\dist(v_0,v').$$
For convenience, we will rewrite the previous facts as $\alpha(wv_0)=w'v_0$ and for all $u\in X^{\leq \cte}$
\begin{equation} \label{eq:equiv} \dist(v_0,wuv_0)-\dist(v_0,wv_0)=\dist(v_0,\alpha(wuv_0))-\dist(v_0,w'v_0).\end{equation}

Seeing $w$ and $w'$ as automorphism of $\ga$, we let $\beta= (w')^{-1}\circ \alpha \circ w$. Since $\theta(X)\subseteq G$, $\beta\in G$. We have that $\beta(v_0)=(w')^{-1}\alpha(wv_0)=(w')^{-1}v'=v_0$ and for every  $u\in X^{\leq \ctee}$
\begin{align*}
\phi_w(uv_0)&=\d(v_0,w u v_0)-\d(v_0,wv_0)& \text{by Proposition \ref{prop:accept_geo}}\\
&=\d(v_0,\alpha(wuv_0))-\d(v_0,w'v_0)&\text{by equation \eqref{eq:equiv}}\\
&= \dist(v_0, w' (w')^{-1}\alpha(wuv_0))-\dist(v_0,w'v_0)&\\
&= \dist(v_0,w'\beta(uv_0))-\dist(v_0,w'v_0) & \\
&= \phi_{w'}(\beta(uv_0))& \text{by Proposition \ref{prop:accept_geo}}
\end{align*}

Similarly, suppose now that $\phi_w\sim_{\cte}\phi_{w'}$. Then  there $\alpha \in G$ such that for all $u\in X^{\leq \cte}$, $\phi_w(uv_0)=\phi_{w'}(\alpha(uv_0))$.
Take $\beta=w'\circ \alpha \circ w^{-1}$ and note that $\beta(wv_0)=w'(\alpha v_0)= w' v_0$. 

Now, for $wuv_0\in \ball_{wv_0}(\ctee)$ we have that 
\begin{align*}
\dist(v_0,wuv_0)-\dist(v_0,wv_0)=\phi_{w}(uv_0)&=\phi_{w'}(\alpha(uv_0))\\
&=\dist(v_0,w'\alpha(u v_0))-\dist(v_0,w'v_0)\\
&=\dist(v_0,\beta(wu v_0))-\dist(v_0,w'v_0).
\end{align*}
Thus $wv_0$ and $w'v_0$ have the same $\ctee$-type mod $G$.
\end{proof}

\subsection{Random geodesic combings}
\begin{defn}
A {\it random geodesic combing} of a graph $\ga$ is a set of probability measures $\{\mu_v\mid v\in V\ga\}$ where each $\mu_v$ has support on the set of geodesic paths starting at $v_0$ and ending at $v$. A {\it geodesic combing} is a random geodesic combing where each probability measure has support on a single geodesic path. 

Since paths starting at $v_0$ are codified by words in $X$,  we can think that $\mu_v$ is a probability measure defined on $X^*$, whose support is contained in the set 
$\{w\in \mathsf{Geo}(\ga,X,\theta) \mid wv_0=v\}$.  
We will say that a  random geodesic combing $\{\mu_v\mid v\in V\ga\}$ is  {\it Markov} if there is a finite rooted $X$-labeled digraph $\Delta$ and a function $\rho\colon E\Delta \to [0,1]$, such that for every $w\in X^*$, $\mu_{wv_0}(p_w)=\rho(w)$, where $\rho$ is extended to (labels of) paths in $\Delta$ as in subsection \ref{sec:regular}.
\end{defn}

For future use, we will need count vertices of a certain $M$-type in a ball of radious $n$. 
\begin{defn}\label{defn:auxilariy e}
Let $\sim_{\ctee}$ denote the equivalence relation on the vertices of $\ga$ consisting on having the same $\ctee$-type mod $G=\gen{\theta(X)}$. Let $$e_{(G,\bullet)}^{[v]}(n)\coloneq \sharp \{u\sim_\ctee v \mid \dist(v_0,u)\leq n \}$$ and $$\ol{e}_{(G,\bullet)}^{[v]}(n)\coloneq \sharp \{u\sim_\ctee v \mid \dist(v_0,u)= n \}.$$
\end{defn}

\begin{thm}\label{thm:Markov}
With Hypothesis \ref{hyp:fftp} and assuming that $G=\gen{\theta(X)}$. The following hold:
\begin{enumerate}
\item[(i)] Let $\Delta$ be the fftp-digraph.
There is a function $\rho\colon E\Delta \to [0,1]$ defining a Markov random geodesic combing for the graph $\ga$.

\item[(ii)]  There is a square non-negative matrix $\mathbf{A}$, and for every $v\in V\ga$ there are non-negative vectors $\mathbf{u}$ and $\mathbf{v}$ such that $\overline{e}^{[v]}_{(G,\bullet)}= \mathbf{v}^T\mathbf{A}^n\mathbf{u}$. 

\item[(iii)] For every $v\in V\ga$, the series 
$$\sum_{n\geq 0} e_{(G,\bullet)}^{[v]}(n)t^n \text{ and }\sum_{n\geq 0} \ol{e}_{(G,\bullet)}^{[v]}(n)t^n  $$
are rational functions.
\end{enumerate}

\end{thm}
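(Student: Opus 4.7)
The plan leverages the finite-state description of geodesics from the \fftp{}-automaton (Proposition~\ref{prop:accept_geo}) together with Lemma~\ref{lem:k-equivalent}, which asserts that distinct geodesics ending at the same vertex land on $\ctee$-equivalent states modulo $G$. This compatibility between the automaton structure and the $\ctee$-type classification of vertices is the crux of the argument, because it ties counts over geodesic words (which the automaton sees) to counts over vertices (which we want).

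For (i), I construct $\rho\colon E\Delta\to[0,1]$ on the edges of the \fftp{}-digraph so that the product $\rho(w)=\prod \rho(e_i)$ along the $\Delta$-lift of a geodesic word $w$ equals $\mu_{wv_0}(p_w)$ for a probability measure $\mu_v$ supported on geodesics to $v$. My approach is to define $\rho$ in a way compatible with the $G$-equivariance of the automaton states: via Lemma~\ref{lem:k-equivalent}, the mass-one constraints $\sum_{w:\,wv_0=v}\rho(w)=1$ partition into finitely many $\ctee$-type classes, and on each class one can solve for $\rho$ using the recursion inherited from the fftp-automaton transitions and the local fellow-traveling guaranteed by the \fftp{} property.

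With such a Markov $\rho$ in hand, for (ii) I define the non-negative square matrix $\mathbf{A}$ indexed by the accepting states of $\Delta$ by $\mathbf{A}_{\phi,\phi'} = \sum_{x:\,\tau(\phi,x)=\phi'}\rho(\phi,x,\phi')$. Taking $\mathbf{u}$ the standard basis vector at $\phi_0$ and $\mathbf{v}$ the indicator vector of accepting states whose $\ctee$-type equals $[v]$, and using that each $\mu_u$ is a probability measure on geodesics to $u$, one gets
\begin{equation*}
\mathbf{u}^T\mathbf{A}^n\mathbf{v} \;=\; \sum_{\substack{|w|=n\\ [\phi_w]=[v]}}\rho(w) \;=\; \sum_{\substack{u\sim_\ctee v\\ \dist(v_0,u)=n}}\ \sum_{w:\,p_w\text{ geo to }u}\mu_u(p_w) \;=\; \ol{e}^{[v]}_{(G,\bullet)}(n).
\end{equation*}
Part (iii) is then immediate: the generating function $\sum_n (\mathbf{u}^T\mathbf{A}^n\mathbf{v})\,t^n = \mathbf{u}^T(I-t\mathbf{A})^{-1}\mathbf{v}$ is a rational function, and since $e^{[v]}_{(G,\bullet)}(n) = \sum_{k\leq n}\ol{e}^{[v]}_{(G,\bullet)}(k)$, the cumulative generating function is $\frac{1}{1-t}$ times the previous one, hence also rational.

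The principal obstacle is step (i): constructing a single $\rho$ on the finite edge set $E\Delta$ that simultaneously satisfies the probability-measure constraint at \emph{every} vertex $v$ of the infinite graph $\ga$. Since distinct vertices collapse to the same automaton state, these constraints are a priori over-determined, and reducing them to a finite consistent system is where one must pay the most attention. The resolution rests on the local rigidity from the \fftp{} hypothesis together with the bijective action of $G$ on $\ctee$-equivalent states provided by Lemma~\ref{lem:k-equivalent}.
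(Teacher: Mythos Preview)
Your outline for (ii) and (iii) matches the paper's argument essentially verbatim, and your identification of (i) as the crux is correct. However, you have not actually constructed $\rho$; you only assert that ``on each class one can solve for $\rho$ using the recursion inherited from the fftp-automaton transitions.'' This is precisely the step that needs content, and the paper supplies it with a concrete formula that you are missing.

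The key observation is that for any non-fail state $\phi$, the number of \emph{$v_0$-parents} of any vertex $wv_0$ with $\phi_w=\phi$ (i.e.\ the number of edges into $wv_0$ lying on a geodesic from $v_0$) can be read off from $\phi$ alone: by Proposition~\ref{prop:accept_geo} it equals $\sharp\{x\in X\mid \phi(x)=-1\}$. Hence one may set
\[
\rho(e)=\frac{1}{\sharp\,\text{parents of }e_+}
\]
for every edge $e$ of $\Delta$ whose terminal state is not $\varrho$, and $\rho(e)=0$ otherwise. This is a single well-defined function on the finite edge set $E\Delta$. A short induction on $\d(v_0,v)$ then shows $\sum_{p\in\mathsf{Geo}(v_0,v)}\rho(p)=1$: each geodesic to $v$ factors through a parent edge, each such edge contributes $1/(\sharp\,\text{parents of }v)$, and by induction the geodesics to each penultimate vertex already have total mass $1$.

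So your framework is right and coincides with the paper's, but the ``over-determined system'' you worry about is not solved by an abstract finiteness argument; it dissolves once one notices that the parent count is a function of the automaton state. With this formula in hand, your matrix $\mathbf{A}$ is exactly the paper's (the transition count divided by the parent count of the target), and (ii)--(iii) go through as you wrote.
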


\begin{proof}

(i). 
Let $v_0,v\in V\ga$. A {\it $v_0$-parent of $v$} is an edge $e$ adjacent to $v$ in a geodesic from $v_0$ to $v$.
If $w$ and $w'$ are words accepted in equivalent states of the \fftp{}-automaton, then $wv_0$ and $w'v_0$ have the same number of parents, which is the number of $x\in X$ for which $\phi_w(x)=-1$.


We assign weights on the edges of the fftp-digraph $\Delta$ as follows: for an edge going from a state  $\phi_{w}$ to a state $\phi_{wx}\neq \varrho$ we assign the weight $\frac{1}{\sharp \text{parents } \phi_{wx}}$; for an edge going to the state $\varrho$ we  assign weight $0$. 
We now extend this weights to paths in $\Delta$ by multiplying the weights along a path. 
Denote by $\rho$ this weight function on paths. 
For convenience,  the path of length zero has weight $1$. 

We get that each path in $\Delta$ starting at $\phi_0$ and not ending in $\varrho$ codifies a word labeling a geodesic path in $\ga$  starting at $v_0$, and it has an associated weight. So we can think that $\rho$ is a function on paths on $\ga$ starting at $v_0$ that assigns a weight with values in $[0,1]$ and that a necessary and sufficient condition for $\rho(p)=0$ is that $p$ is a non-geodesic path in $\ga$.

We need to show that  for each  $v\in V\ga$, $\sum_{p \in \mathsf{Geo}(v_0,v)}\rho(p)=1$. The proof is by induction on $\d(v_0,v)$. 
The base of induction is $\dist(v_0,v)=1$.
Consider the elements $x_1,\dots, x_n$ in $X$ satisfying that  $x_iv_0=v$. 
Then, by Lemma \ref{lem:k-equivalent}, the functions have the same type, i.e $[\phi_{x_i}]=[\phi_{x_j}]$ for $1\leq i,j\leq n$ and $\phi_{x_i}(y)=-1$ if and only if $y\in \{x_1^{-1},\dots ,x_n^{-1}\}$. Thus the number of parents of $v$ is $n$ and that is the number of paths in $\Delta$ that codify a geodesic path from $v_0$ to $v$ and hence, each of this paths have weight $\frac{1}{n}$ and the claim follow in this case.

Now assume that the result has been proved for  $\d(v_0,v')\leq m$ with $m\geq 1$ and assume that $\d(v_0,v)=m+1$. Let $v_1,\dots, v_k$  be the vertices in $\ga$ with $\d(v_i,v)=1$ and $v_i$ in some geodesic from $v_0$ to $v$. Let $e_{i,j}$, $i=1,\dots,k$ and $j=1,\dots,n(j)$ be edges from $v_i$ to $v$.
Notice that these are all the parents of $v$, and therefore they correspond to an edge $\hat{e}_{i,j}$ in the fftp-automaton of weight $\frac{1}{\sharp \text{ parents of v}}$. 

Then \begin{align*}
\sum_{p\in \mathsf{Geo}(v_0,v)}\rho(p) & = \sum_{i=1}^k  \sum_{j=1}^{n(j)} \rho(\hat{e}_{i,j}
) \sum_{p\in \mathsf{Geo}(v_0,v_i)} \rho(p)\\
& =\sum_{i=1}^k  \sum_{j=1}^{n(j)} \rho(\hat{e}_{i,j})=\left(\sharp \text{ parents of v}\right) \frac{1}{\sharp \text{ parents of v}}=1.
\end{align*}

(ii). We continue with the notation of (i).

Let $\mathbf{M}=(m_{i,j})$ be the transition of matrix the fftp-digraph i.e. $m_{ij}$ is the number of edges starting in the state $i$ and ending in the state $j$. 
We let the index $0$ correspond to the state $\phi_0$.  
Form a new matrix $\mathbf{A}$ where for $j\neq \varrho$,  ${a}_{ij}$ is equal to $m_{i,j}$ divided by the number of parents of the state corresponding to the index $j$. 
Using the theory of Markov Chains, it follows that if $a_{0j}(n)$ denotes the $(0,j)$-term of $\mathbf{A}^n$, we have that $a_{0j}(n)$ is the sum of the weights of the paths in $\Delta$ of length $n$ starting at $\phi_0$ and ending in the state $j$, which correspond to the number of geodesic paths in $\ga$ starting a $v_0$ and accepted at the state $j$.

Let $v\in V\ga$ and let $\cT$ a set of those $\phi_w\neq \rho$ with $wv_0\sim_\ctee v$ i.e. the states corresponding to paths ending in vertices with the same $M$-type as $v$. 
Let $\mathbf{u}$ be the column vector with all entries equal to zero except from the entry corresponding to $\phi_0$ which is equal to 1. Similarly, let  $\mathbf{v}$ be the column vector with $v_i=1$ if $i\in \mathcal{T}$ and $v_i=0$ otherwise. 
Then for $n\geq 1$,
$\mathbf{v}^T \mathbf{A}^n \mathbf{u}=\sum_{j\in \cT}a_{0j}(n)$  represents the sum of the weights of the paths in $\ga$ starting at $v_0$ and finishing at a vertex at distance $n$ with the same $\ctee$-type as $v$. 

(iii). We continue with the notation of (ii). First notice that 
\begin{equation}
\label{eq:eole}(1-t)\sum_{n\geq 0} e_{(G,\bullet)}^{[v]}(n)t^n=\sum_{n\geq 0}  \ol{e}_{(G,\bullet)}^{[v]}(n)t^n.
\end{equation}

So to prove the rationality of the series, it will be enough to show the rationality of the right-hand side one

Thus $$\sum_{n\geq 0}  \ol{e}_{(G,\bullet)}^{[v]}(n)t^n = \sum_{n\geq 0 }\mathbf{v}^T \mathbf{A}^n \mathbf{u} t^n= \mathbf{v}^T (Id - \mathbf{A}t)^{-1} \mathbf{u}= \dfrac{\mathbf{v}^T\mathrm{Adj}(Id-\mathbf{A}t)\mathbf{u}}{\det(Id-\mathbf{A}t)} $$
which is a quotient of two polynomials in $\mathbb{Q}[t]$.
\end{proof}

\subsection{Cannon's proof: Cone-types and \texorpdfstring{$N$}{N}-types}
The fftp-digraph used in Theorem \ref{thm:Markov} might be unnecessarily big.
In fact, Cannon \cite{Cannon} shows that under hyperbolicity, for $N$ sufficiently big, the $N$-types  determine the cone types and hence there are finitely many cone types. 
The rationality of the growth series follows from a recursion involving the cone types, which is similar to the argument in the proof of Theorem \ref{thm:Markov}. Neumann and Shapiro \cite{NS} realized that one just needs fftp to show that for $N$ sufficiently big, the $N$-type determine the cone type.
In this subsection,  for the sake of completeness, we will see that there are covering digraph maps from the fftp-digraph to the $N$-type digraph, and from the $N$-type digraph to the cone-type digraph.

\begin{lem}\label{lem:fftp-digraph}
With Hypothesis \ref{hyp:fftp}.
Let $(X,\tau, \phi_0, \cS,\cA)$ be the \fftp{}-automaton for $(\ga,X,\theta)$.
Let $\Delta$ be the fftp-digraph.
The projection map $\mathcal{S}\to \mathcal{S}/\sim_\ctee$ extends to a digraph map  $\pi\colon\Delta\to\ol{\Delta}$ such that for any two paths in the fftp-digraph that represent geodesic paths to the same vertex of $\ga$, their projection to $\ol{\Delta}$ end in the same state.
\end{lem}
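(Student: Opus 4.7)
The plan is: specify $\ol{\Delta}$ so that $\pi$ becomes a digraph morphism tautologically, and then reduce the path property to Lemma \ref{lem:k-equivalent}.

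Concretely, I would take the vertex set of $\ol{\Delta}$ to be $\mathcal{S}/\!\sim_\ctee$, with the fail state $\varrho$ in its own singleton class, and include in $\ol{\Delta}$ an edge $([\phi], x, [\tau(\phi, x)])$ for every edge $(\phi, x, \tau(\phi, x))$ of $\Delta$. The projection $\pi$ then sends states to their $\sim_\ctee$-class and edges to their projected triples; it is a digraph morphism extending the quotient map $\mathcal{S}\to \mathcal{S}/\!\sim_\ctee$ by construction.

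For the path property, let $p, q$ be paths in $\Delta$ starting at $\phi_0$ that, when read as words $w, w' \in X^*$, give geodesic paths $p_w, p_{w'}$ in $\ga$ with common endpoint $v\in V\ga$. By an easy induction on length using the definition of $\tau$, the path $p$ terminates at state $\phi_w$ and $q$ at $\phi_{w'}$. Lemma \ref{lem:k-equivalent} gives $\phi_w \sim_\ctee \phi_{w'}$, so $\pi(\phi_w) = \pi(\phi_{w'})$ as vertices of $\ol{\Delta}$, which is exactly the claim.

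The only point of caution is how much identification $\ol{\Delta}$ is meant to carry: if one wants $\ol{\Delta}$ genuinely to have one outgoing edge per label per class (rather than the tautological copy of every edge of $\Delta$), one must verify that $\phi \sim_\ctee \phi'$ implies $\tau(\phi, x) \sim_\ctee \tau(\phi', x)$ for every $x \in X$. This would follow from the explicit recursive formula in Definition \ref{defn:fftp-automaton} together with the $G$-equivariance of the data $\dist^x(a, b)$: an automorphism $\alpha \in G$ witnessing $\phi \sim_\ctee \phi'$ permutes the minimization data defining $\tau$, and Proposition \ref{prop:accept_geo} ensures that the values produced agree after restriction to $\ball_{v_0}(\ctee)$. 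This compatibility is the main piece of actual verification in the argument, but it is routine once the explicit formulas are unpacked.
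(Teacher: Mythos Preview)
Your first two paragraphs are correct for the lemma as literally stated, and the reduction of the path property to Lemma~\ref{lem:k-equivalent} is exactly what the paper does. The divergence is in how $\ol{\Delta}$ and the edge map are built.

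The paper's aim (announced in the paragraph preceding the lemma) is a \emph{covering} map $\Delta\to\ol{\Delta}$, i.e.\ a local isomorphism on outgoing edges. Your tautological quotient---put in an edge $([\phi],x,[\tau(\phi,x)])$ for every edge of $\Delta$---is a digraph map but generally not a covering: if $\phi\sim_\ctee\phi'$ while $[\tau(\phi,x)]\neq[\tau(\phi',x)]$, then $[\phi]$ acquires two outgoing edges with label $x$, only one of which is hit from $\phi$.

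Your last paragraph correctly isolates this issue but the proposed fix is where the gap lies. The claim that $\phi\sim_\ctee\phi'$ implies $\tau(\phi,x)\sim_\ctee\tau(\phi',x)$ for the \emph{same} $x$ is false in general: for a non-Cayley $\ga$, the automorphism $\alpha\in G$ witnessing $\phi\sim_\ctee\phi'$ need not fix the bijection $x\mapsto e(x)$, so it does not carry the minimization data for $\tau(\cdot,x)$ to that for $\tau(\cdot,x)$; rather it carries it to $\tau(\cdot,x')$ where $e(x)=\alpha(e(x'))$. The paper explicitly flags this (``does not extend to a canonical map on edges in general'') and proceeds differently: it fixes a representative $\psi_0$ in each class and an automorphism $\alpha_\psi$ for each $\psi\in[\psi_0]$, and sends the edge out of $\psi$ labeled $x'$ to an edge out of $[\psi_0]$ indexed by the $x$ with $e(x)=\alpha_\psi(e(x'))$. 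Since each $\alpha_\psi$ permutes $(v_0)_-^{-1}$, this gives a bijection between edges out of $\psi$ and edges out of $[\psi_0]$, hence a (non-canonical) covering. The Remark following the lemma confirms that the label-preserving compatibility you sketch holds only in the Cayley-graph case.
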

We will say that  $\ol{\Delta}$ is a  {\it $\ctee$-type digraph.}
\begin{proof}
We note that the canonical projection map on vertices given by $\phi \mapsto [\phi]$, does not  extend to a canonical map on edges in general, and the construction will depend on the choices of automorphism realizing the $\ctee$-equivalence between vertices of $\Delta.$

We fix a representative $\psi_0$ for each $\ctee$-equivalent class of vertices $[\psi]$, and for each $\psi\in [\psi_0]$ we  fix an automorphism $\alpha_{\psi}\in G$ such that $\psi\mid_{\ball_{v_0}(\ctee)}=(\psi_0\circ\alpha_{\psi})|_{\ball_{v_0}(\ctee)}$. 
Recall the notation of Definition \ref{defn:presentation}. We have a bijection from $X$ to $(v_0)_{-}^{-1}$ given by $x\mapsto e(x)$ and with inverse map $e\mapsto x_e$.
Let $x,x'\in X$ such that $e_x=\alpha(e_{x'})$. 
We map the edges $f$ of $\Delta$, with $f_-= \psi\in [\psi_0]$ and $f_+=\tau(\psi, x')$ to the same edge $\ol{e}\in \Delta$ from $[\psi_0]$ to $[\tau(\psi_0,x)]$. By construction, this map is surjective and a local isomorphism, and thus a covering map.

The fftp-digraph is an $X$-labeled graph. If two words $w$, $w'\in X^*$ represent geodesics paths starting at $v_0$ and ending at $v$, then by Lemma \ref{lem:k-equivalent} 
the vertices $\phi_w$ and $\phi_{w'}$ of $\Delta$ are $\ctee$-equivalent mod $G$.
\end{proof}

\begin{rem}
In the case of Cayley graphs, the group of automorphism considered preserves the labeling of the edges and hence two functions are in the same $\ctee$-type if and only if they are equal. In this case the covering map $\Delta\to \ol{\Delta}$ is canonical.
\end{rem}

\begin{defn}
Let $\ga$ be a vertex-transitive graph and $v_0$ a vertex.

The {\it cone} of $v\in V\ga$, denoted $\cone(v)$, is the set paths $p$ starting at $v$ that continue a geodesic from $v_0$ to $v$.
Two vertices $v$ and $v'$ have the same {\it cone type mod $G$} if there is an automorphism of $\alpha\in G$ such that $\alpha(v)=v'$ and $\alpha(\cone(v))=\alpha(\cone(v'))$.
\end{defn}

The following is an standard argument and goes back to Cannon \cite{Cannon}. 
\begin{lem}\label{lem:cone_types}
With the hypothesis \ref{hyp:fftp} and assume further that $\cP$ is the collection of all paths. 
If $\phi_w$ and $\phi_u$ are $\ctee$-equivalent mod $G$ then $(p_w)_+,$ and $(p_{u})_+$ have the same cone type mod $G$.
\end{lem}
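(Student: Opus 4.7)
Following Cannon's approach, my strategy is to use the $\ctee$-equivalence of states to construct an explicit automorphism in $G$ matching the two cones, and to verify this via the digraph covering of Lemma \ref{lem:fftp-digraph}, which propagates the $\ctee$-equivalence along cone paths.

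Let $\alpha \in G$ witness $\phi_w \sim_\ctee \phi_u$; following the convention in the proof of Lemma \ref{lem:fftp-digraph} I take $\alpha(v_0) = v_0$, and set $\beta := \theta(u)\,\alpha^{-1}\,\theta(w)^{-1} \in G$. Then $\beta(wv_0) = uv_0$, and unwinding the definitions yields
$$
d(v_0,\beta z) - d(v_0,uv_0) \;=\; d(v_0,z) - d(v_0,wv_0)
$$
for every $z \in \ball_{wv_0}(\ctee)$. In particular $\beta$ sends the cone-edges at $wv_0$ bijectively onto the cone-edges at $uv_0$, handling the length-one case.

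To prove $\beta(\cone(wv_0)) \subseteq \cone(uv_0)$ (the reverse inclusion being symmetric), let $p$ be a cone path at $wv_0$ with word $x_1\cdots x_n \in X^*$, so $p(k) = \theta(wx_1\cdots x_k)(v_0)$. Iterating the edge-correspondence of Lemma \ref{lem:fftp-digraph}, I define a word $\tilde x_1\cdots \tilde x_n \in X^*$ with $\beta p(k) = \theta(u\tilde x_1\cdots \tilde x_k)(v_0)$ for every $k$, where $\tilde x_{k+1}$ is the label of the edge at $v_0$ corresponding to $x_{k+1}$ under the automorphism identifying the $\ctee$-classes of $\phi_{wx_1\cdots x_k}$ and $\phi_{u\tilde x_1\cdots \tilde x_k}$. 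Lemma \ref{lem:fftp-digraph} keeps these pairs of states $\ctee$-equivalent mod $G$, so $\phi_{wx_1\cdots x_k} = \varrho$ if and only if $\phi_{u\tilde x_1\cdots \tilde x_k} = \varrho$. Because $p$ is a cone path and $\cP$ is the collection of all paths, Proposition \ref{prop:accept_geo} gives $\phi_{wx_1\cdots x_k} \neq \varrho$ for every $k$, whence $\phi_{u\tilde x_1\cdots \tilde x_k} \neq \varrho$ too, and $u\tilde x_1\cdots \tilde x_n$ labels a geodesic. Hence $\beta p$ extends a geodesic from $v_0$ to $uv_0$, i.e., $\beta p \in \cone(uv_0)$.

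The principal obstacle is the bookkeeping behind the claim that $\beta p$ is actually labeled by the word $\tilde x_1\cdots \tilde x_n$ produced by this recipe. Concretely, this reduces to verifying inductively on $k$ the identity
$$
\beta\, \theta(wx_1\cdots x_k) \;=\; \theta(u\tilde x_1\cdots \tilde x_k)\,\alpha_k^{-1},
$$
for an automorphism $\alpha_k \in G$ fixing $v_0$ that witnesses the propagated $\ctee$-equivalence at the $k$-th cone vertex. The case $k=0$ is the definition of $\beta$, and the inductive step follows from the compatibility of the edge identification $x_{k+1} \leftrightarrow \tilde x_{k+1}$ with $\alpha_k$ already built into the proof of Lemma \ref{lem:fftp-digraph}.
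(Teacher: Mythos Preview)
Your approach is quite different from the paper's: the paper takes a minimal-length $q\in\cone(v)$ with $\beta(q)\notin\cone(v')$, applies fftp once to the resulting minimal non-geodesic $p_w\cdot\beta(q)$ to get a fellow-travelling geodesic $r$, and transfers the shortcut back via $\beta^{-1}$ using only the $\ctee$-equivalence at the base vertex. You instead attempt to propagate the $\ctee$-equivalence inductively along the entire cone path via the covering of Lemma~\ref{lem:fftp-digraph}.

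There is a genuine gap in your inductive step. You need two things simultaneously: that the states $\phi_{wx_1\cdots x_k}$ and $\phi_{u\tilde x_1\cdots\tilde x_k}$ remain $\ctee$-equivalent, and that the witnessing automorphism $\alpha_k$ satisfies your identity $\beta\,\theta(wx_1\cdots x_k)=\theta(u\tilde x_1\cdots\tilde x_k)\,\alpha_k^{-1}$ (so that $\beta p$ really is the path labelled by the $\tilde x$'s). If you define $\tilde x_{k+1}$ via your $\alpha_k$, then the identity survives and $\alpha_{k+1}=\theta(x_{k+1})^{-1}\alpha_k\theta(\tilde x_{k+1})$ fixes $v_0$; but nothing shows that this particular $\alpha_{k+1}$ witnesses $\ctee$-equivalence at step $k{+}1$. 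Unwinding via Proposition~\ref{prop:accept_geo}, that would require $d(v_0,\beta y)-d(v_0,y)$ to be constant for $y$ ranging over $\ball_{wx_1\cdots x_{k+1}v_0}(\ctee)$, and such $y$ may lie at distance $\ctee+k+1$ from $wv_0$, well outside the ball where the hypothesis gives you control. The appeal to Lemma~\ref{lem:fftp-digraph} does not rescue this: the automorphisms $\alpha_\psi$ fixed there are chosen once, state by state relative to a representative, and have no built-in compatibility with your specific $\beta$; using them instead would break your identity and hence the claim that $\beta p$ is labelled by the $\tilde x$'s. In effect, closing your induction is at least as hard as the lemma itself---the paper's minimal-counterexample use of fftp is exactly the device that replaces this missing propagation.
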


\begin{proof} 
Let $v=(p_u)_+$ and $v'=(p_w)_+$.
Suppose that there is an automorphism $\beta\in G$  satisfying that $\beta(v)=v'$ and $\phi_w(\beta(a))= \phi_u(a)$ for all $a\in X^{\leq \ctee}$.
We will prove $\beta(\cone(v))=\cone(v')$.
Note that by symmetry, it is enough to show that $\beta(\cone(v))\subseteq \cone(v')$. 

Suppose that there is a path $q$ in $\cone(v)$ such that $\beta(q)$ does not belong to $\cone(v')$. 
Without loss of generality, we can assume that $\ell(q)$ is minimal with this property.
Let $t$ be path consisting on $p_w$ followed by $\beta(q)$. 
Then $t$ is a minimal non-geodesic  i.e. a path for which all proper subpath is geodesic, and hence it $\ctee$-fellow travels with a geodesic path $r$. 
Let $r$ be broken into two subpaths $r_1$ from $v_0$ to some point $a$ at distance at most $\ctee$ from $v'$ and $r_2$ from $v$ to $\beta(q)_+$. 
Let $s_1$ be a geodesic path in from $v_0$ to $\beta^{-1}(a)$ and $s_2$ be $\beta^{-1}(r_2)$. 
Let $s$ be the concatenation of $s_1$ and $s_2$. 
See Figure \ref{fig:cone_type}.

\begin{figure}[ht]
\labellist
\pinlabel $v_0$ at  140 -12
\pinlabel $v$ at -8 195
\pinlabel $\beta^{-1}(a)$ at 82 206
\pinlabel $q\in \cone(v)$ at -22 235
\pinlabel $\beta(q)$ at 191 190
\pinlabel $v'=\beta(v)$ at 163 148
\pinlabel $a$ at 235 130 
\pinlabel $r_2$ at 250 175
\pinlabel $s_2=\beta^{-1}(r_2)$ at 90 240 
\pinlabel $p_u$ at 65 80
\pinlabel $p_w$ at 155 80
\pinlabel $s_1$ at 103 100
\pinlabel $r_1$ at 180 29
\endlabellist
\centering
\includegraphics[scale=0.8]{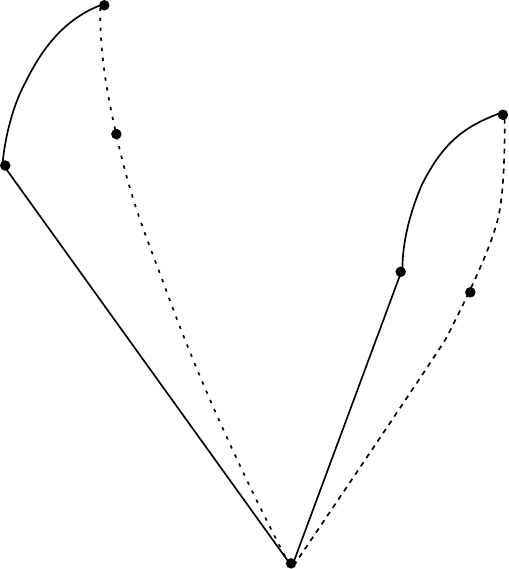}
\caption{Paths in the proof of  Lemma \ref{lem:cone_types}.}
\label{fig:cone_type}
\end{figure}
By \eqref{eq:timedif}, $$\ell(s_1)-\ell(p_u)=\phi_{u}(\beta^{-1}(a))=\phi_{w}(a)=\ell(r_1)-\ell(p_w).$$
Then $\ell(s)-\ell(p_u)-\ell(q)=\ell(s_1)-\ell(p_u)+\ell(s_2)-\ell(q)=\ell(r_1)-\ell(p_w)+\ell(r_2)-\ell(\beta(q))=\ell(r)-\ell(t)=-1$. 
Thus $p_u$ followed by $q$ is not geodesic, contradicting that $q\in \cone(v)$.
\end{proof}

Assume the hypothesis of the previous lemma.
Then $\ga$ has finitely many cone types, and can we construct the {\it cone-type digraph} $\overline{\overline\Delta}$ as follows. The vertices are  the different cone types (i.e. $V\overline{\overline\Delta}=\{ \cone(v) \mid v\in V\ga\}$) and and there is an edge from $\cone(v)$ to $\cone(u)$ if there is an edge in $\cone(v)$ starting at $v$ and ending at $u'$ with $\cone(u')$ equivalent to  $\cone(u) \mod G$. 
Thus, given a covering $\pi\colon \Delta\to \ol{\Delta}$ from the fftp-digraph to an $\ctee$-type digraph, we have a natural map from $V\ol{\Delta}$ to $V\overline{\overline\Delta}$ and it is easy to extend it to a digraph covering map $\ol{\Delta}$ to $\overline{\overline\Delta}$ by using $\pi$ to determine to which type of edge in $\ga$ correspond traversing a given edge in $\ol{\Delta}$.

\begin{rem}
It is worth recalling the example of Elder \cite{ElderNofftp} of a virtually abelian group with a generating set that does not have the falsification by fellow traveler property, but it has finitely many cone-types. 
Thus fftp is strictly stronger than having finitely many cone types. 
\end{rem}

We finish this section by recalling that there is also the concept of the {\it $k$-tail} of a vertex, which is similar to the $k$-type but records the vertices in a ball of radios $k$ at $v$ that cannot be reached by a geodesic path from $v_0$ passing through $v$. See \cite[III.$\ga$, Theorem 2.18]{BH}.



\section{Choosing the accepting states and proof of Theorem \ref{thm:mainA+}}\label{sec:proofA}

In this section we prove Theorem \ref{thm:mainA+}. Throughout this section we assume Hypothesis \ref{hyp:fftp} and the hypothesis of Theorem \ref{thm:mainA+}. That is, $\ga$ is a locally finite, $G\leqslant \Aut(\ga)$ is vertex transitive, $v_0\in V\ga$, $(X,\theta)$ is a presentation for paths starting at $v_0$ with $\theta(X)\subseteq G$, $\cP$ is $v_0$-spanning family of paths, $\cL(\cP)$ is regular, $\ga$ has $\cte$-fftp relative to $\cP$, and $Z$ is a subgaph of $\ga$ with $\cte$-fellow projections $G$-proper embeddings.

\begin{lem}\label{lem:fftpcoset}
%
For every path $p$ in  $\cP$ from $v_0$ to $Z$ such that  $\ell(p)>\d(v_0,Z)$, there is a path $q\in \cP$ from $v_0$ to $Z$  such that $\ell(q)< \ell(p)$,  $\d(p_+,q_+)\leq \cte$, and $p$ and $q$ asynchronously $\cte^2$-fellow travel.
\end{lem}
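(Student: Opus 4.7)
The plan is to split on whether $p$ is already a geodesic from $v_0$ to $p_+$. If $p$ is not geodesic to $p_+$, then $p \in \cP \subseteq \cP^+$ and the $\cte$-fftp hypothesis directly yields $q \in \cP$ with the same endpoints as $p$, strictly shorter, and asynchronously $\cte$-fellow traveling with $p$; since $q_+ = p_+ \in Z$ this settles the case (noting $\cte \leq \cte^2$, assuming $\cte \geq 1$). So henceforth I may assume $p$ is geodesic from $v_0$ to $p_+$, equivalently $\ell(p) = \d(v_0, p_+)$; then the standing hypothesis $\ell(p) > \d(v_0, Z)$ says $p_+$ is not a nearest point of $Z$ to $v_0$.

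Set $v_i = p(i)$ and let $k$ be the largest index in $[0, \ell(p)]$ satisfying $\d(v_k, Z) < \ell(p) - k$. Such $k$ exists and satisfies $0 \leq k \leq \ell(p) - 1$: indeed $k = 0$ works by the hypothesis $\d(v_0, Z) < \ell(p)$, while $k = \ell(p)$ fails since $\d(v_{\ell(p)}, Z) = 0$. Maximality of $k$ forces $\d(v_{k+1}, Z) \geq \ell(p) - k - 1$, while the fact that $p$ is geodesic gives $\d(v_{k+1}, p_+) = \ell(p) - k - 1 \geq \d(v_{k+1}, Z)$. Equality therefore holds, so $p_+ \in \pi_Z(v_{k+1})$. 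Applying the $\cte$-fellow projections hypothesis to the adjacent pair $(v_k, v_{k+1})$ with this choice of projection yields $z \in \pi_Z(v_k)$ with $\d(z, p_+) \leq \cte$. Combining, $\d(v_0, z) \leq k + \d(v_k, z) = k + \d(v_k, Z) \leq \ell(p) - 1$.

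To produce $q$, I let $\alpha$ be a geodesic from $p_+$ to $z$ of length at most $\cte$, and form the concatenation $p \cdot \alpha$. Since $p$ is a geodesic path in $\cP$ starting at $v_0$ and $\alpha$ has length at most $\cte$, Lemma~\ref{lem:ftqg} applied with $B = \cte$ provides a geodesic $q \in \cP$ from $v_0$ to $z$ such that $p \cdot \alpha$ asynchronously $\cte^2$-fellow travels with $q$. Clearly $q_+ = z \in Z$, $\ell(q) = \d(v_0, z) \leq \ell(p) - 1 < \ell(p)$, and $\d(q_+, p_+) \leq \cte$. The asynchronous $\cte^2$-fellow traveling of $p$ with $q$ then follows by truncating the reparametrizations given by Lemma~\ref{lem:ftqg} at the end of the $p$-segment of $p \cdot \alpha$. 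The main technical point is verifying that this truncation does not inflate the fellow-traveling constant beyond $\cte^2$; any small additive contribution from the short tail $\alpha$ can be absorbed into the constant by the standard convention of taking $\cte$ to be the maximum of the fftp and fellow projections constants.
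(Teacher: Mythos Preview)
Your proof is correct and follows essentially the same route as the paper: the same case split on whether $p$ is geodesic, and in the geodesic case your index $k=\max\{i\mid \d(v_i,Z)<\ell(p)-i\}$ is literally equal to the paper's $j=\max\{i\mid p_+\notin\pi_Z(p(i))\}$ (since for geodesic $p$ one has $p_+\notin\pi_Z(p(i))\iff \d(p(i),Z)<\d(p(i),p_+)=\ell(p)-i$), after which both arguments invoke fellow projections and Lemma~\ref{lem:ftqg} identically. Your closing remark about the tail $\alpha$ possibly contributing an extra $\cte$ to the fellow-travel constant is a fair observation; the paper's proof does not address this point either, so your treatment is at least as careful.
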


\begin{proof} Let $p\in \cP$ from $v_0$ to $Z$ such that $\ell(p)>\d(v_0,Z)$.

If $p$ is not geodesic, then by \fftp{} there is a shorter path  $q\in \cP$ with the same endpoints that asynchronously $\cte^2$-fellow travel with it and the claims of the lemma are satisfied.

So assume that $p$ is geodesic.
Recall that  $p(i)$, $i=0,1,\dots, \ell(p)$ denotes the $i$th vertex in the combinatorial path defined by $p$.
Since  $\ell(p)> \d(v_0,Z)$, $p_+\notin \pi_{Z}(v_0)=\pi_{Z}(p_-)$. 
Let $j=\max\{i \mid p_+\notin \pi_{Z}(p(i))\}$, that is $p(j)$ is the closest vertex of $p$  whose projection to $Z$ does not contain the vertex $p_+$. See Figure \ref{fig:coset}.
Note that $j\neq \ell(p)$ and by definition of $j$ we have that $p_+\in \pi_Z(p(j+1))$. 
Since $\d(p(j),p(j+1))=1$ by the fellow projections, there is $z_{j}\in \pi_Z(p(j))$ such that $\d(z_{j},p_+)\leq \cte$. 
Let $r$ be a path from $p_+$ to $z_j$. 
Since $p\in \cP$ is geodesic and $\ell(r)\leq \cte$, by Lemma \ref{lem:ftqg}, the concatenation of $p$ and $t$ asynchronously $\cte^2$-fellow travel with a geodesic path $q\in \cP$ from $v_0$ to $z_j$.
\begin{figure}[ht]
\labellist
\pinlabel $Z$ at 500 100
\pinlabel $\in \pi_{Z}(v_0)$ at 442 119
\pinlabel $z_j\in \pi_{Z}(p(j))$ at 462 153
\pinlabel $p_+$ at 444 200
\pinlabel $v_0$ at 0 115
\pinlabel $p(j)$ at 306 188
\endlabellist
\centering
\includegraphics[scale=0.6]{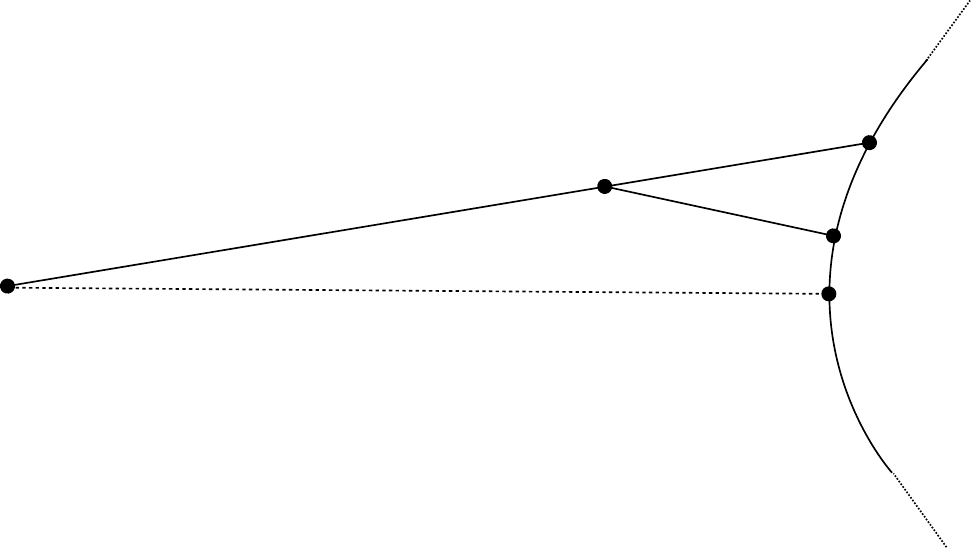}
\label{fig:coset}
\caption{Paths in the proof of Lemma \ref{lem:fftpcoset}.}
\end{figure}
\end{proof}


Since $G$ acts by automorphism on the graph $\ga$, it preserves the distance $\d$, and we see that $\pi_{gZ}(gv)=\pi_{Z}(v)$ for every $g\in G$, $v\in V\ga$. 
It follows that for every $g\in G$,  if $Z$ has $\cte$-fellow projections/$\cte$-bounded projections then so does $gZ$.


Let $H=\gen{\theta(X)}\leqslant G$.
By the $G$-proper embeddings conditions, $\{gZ \mid v_0\in gZ\}$ is a finite set $\{Z_1,\dots, Z_s\}$. Since $H$ is vertex transitive, given $g\in G$, there is $h\in H$ such that $hv_0=gv_0$, and thus $h^{-1}gZ\in \{Z_1,\dots,Z_s\}$. Thus $GZ= \cup_{i=1}^s H Z_i$. Taking a set of $H$-orbit representatives $T\subseteq \{Z_1,\dots, Z_s\}$, we have that  
$\sqcup_{Z_i\in T} HZ_i= GZ$.

We claim that Theorem \ref{thm:mainA+} can be reduced to the case $H=G=\gen{\theta(X)}$. Indeed, we note that
\begin{align*}
&\{w\in X^*\mid p_w \text{ is a $(G,Z)$-geodesic}\}\cap \cL(\cP)\\
= &\cup_{Z_i\in T} \{w\in X^*\mid p_w \text{ is a $(H,Z_i)$-geodesic}\}\cap \cL(\cP)
\end{align*}
and since the finite union of regular languages is regular (see for example \cite[Lemma 1.4.1]{ECHLPT}), the regularity of the language of $(G,Z)$ geodesics follows from the regularity of the language of $(H,Z_i)$-geodesics with $Z_i\in T$.

Similarly, 
\begin{align*}
\sum_{t\geq 0}e_{(G,Z)}(n)t^n & = \sum_{Z_i\in T}\left(\sum_{t\geq 0}e_{(H,Z_i)}(n)t^n \right)\\
\sum_{t\geq 0}e_{(G,Z)}(n)t^n & = \sum_{Z_i\in T}\left(\sum_{t\geq 0}i_{(H,Z_i)}(n)t^n \right)
\end{align*}
and since the sum of finitely many rational functions is a rational function, the raltionality of the functions on the left, follows from the rationality of those in the right of the above equation. 

Thus from now on, we will assume that $G=H=\gen{\theta(X)}$.

\subsection{Regularity of language of \texorpdfstring{$(G,Z)$-}{ }geodesics}
We let $(X, \tau, \phi_0, \cS, \cA)$ denote the \fftp{}-automaton for $(\ga,X,\theta)$ with parameter $\ctee^2$. We will specify the set of accepting states $\cA$  that exactly accept the $(G,Z)$-geodesics.

Suppose that $v_0\in Z$ and that $w\in X^*$ is such that $p_w$ is a geodesic path. 
It might happen that $\ell(p_w)=\d(v_0,wv_0)< \d(v_0,wZ)$ but $p_w$ is a $(G,Z)$-geodesic since there is some $g\in G$ for which  $wv_0\in gZ$ and $\ell(p_w)=\d(v_0,wv_0)=\d(v_0,gZ)$. To deal with this, we will consider the set $\{gZ \mid v_0\in gZ\}$, which by the $G$-proper embedding assumptions, is a finite set $\{Z_1, \dots, Z_n\}$.
Then, $wZ_1,\dots, wZ_n$ is the set of subgraphs  $\{gZ \mid wv_0\in gZ\}$ and thus $p_w$ is a $(G,Z)$-geodesic if and only if there is $i\in \{1,\dots, n\}$ such that $\ell(p_w)=\d(v_0,wZ_i)$.

Recall that all the states of the fftp-automaton are functions $\phi_w$ with $w\in X^*$ that records the distances from $v_0$ to the vertices in $\ball_{wv_0}(\cte)$. We let $\cA$ be the set of those $\phi_w$ that detect that $wv_0$ minimizes the distance from the vertices of $wZ_i$ to $v_0$.
That is
\begin{equation}\label{eq:A}
\cA=\{\phi_w \mid \exists i\in \{1,\dots, n\}\text{ s. t. }\phi_w(u)\geq 0 \;  \forall u\in X^{\leq \cte} \text{ with } wuv_0\in wZ_i\}.
\end{equation}

\begin{lem}\label{lem:ktype of gZ geo}
Let $w\in \cL(P)$. Then $p_w$ is a $(G,Z)$-geodesic if and only if $\phi_w\in \cA$.
\end{lem}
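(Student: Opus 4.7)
The plan is to establish both directions by matching the algebraic condition in \eqref{eq:A} with the geometric condition that $p_w$ realizes the distance from $v_0$ to some translate of $Z$ that contains $wv_0$. The key observation is that the translates $gZ$ containing $wv_0$ are exactly $wZ_1,\dots,wZ_n$, so $p_w$ is a $(G,Z)$-geodesic precisely when $\ell(p_w)=\d(v_0,wZ_i)$ for some $i$.

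First I would handle the forward direction. Suppose $p_w$ is a $(G,Z)$-geodesic, so $\ell(p_w)=\d(v_0,gZ)$ for some $g$ with $wv_0\in gZ$. Since $wv_0\in gZ$, any path from $v_0$ to $wv_0$ has length $\geq\d(v_0,gZ)=\ell(p_w)$, so $p_w$ is in particular a geodesic from $v_0$ to $wv_0$. Because $w\in\cL(\cP)$ and $p_w$ is geodesic, Proposition~\ref{prop:accept_geo} gives $\phi_w\neq\varrho$ and $\phi_w(u)=\d(v_0,wuv_0)-\d(v_0,wv_0)$ for $u\in X^{\leq M}$. Writing $wZ_i=gZ$ for the appropriate index, any $u\in X^{\leq M}$ with $wuv_0\in wZ_i$ satisfies $\d(v_0,wuv_0)\geq\d(v_0,gZ)=\d(v_0,wv_0)$, so $\phi_w(u)\geq 0$, placing $\phi_w$ in $\cA$.

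The backward direction is the substantive half, and this is where I expect the main work. Assume $\phi_w\in\cA$, witnessed by some index $i$. Since $\cA$ excludes $\varrho$, Proposition~\ref{prop:accept_geo} ensures $p_w$ is a geodesic, so $\ell(p_w)=\d(v_0,wv_0)\geq\d(v_0,wZ_i)$ (using $wv_0\in wZ_i$). Suppose for contradiction that this inequality is strict. Because $wZ_i$ inherits $M$-fellow projections (translates of $Z$ preserve the projection constants, as noted before the subsection), Lemma~\ref{lem:fftpcoset} applied to the path $p_w\in\cP$ ending in $wZ_i$ yields a strictly shorter $q\in\cP$ from $v_0$ to $wZ_i$ with $\d((p_w)_+,q_+)\leq M$.

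The final step is to read this geometric witness back into the automaton. Since $\d(wv_0,q_+)\leq M$, there is a word $u\in X^{\leq M}$ with $wuv_0=q_+\in wZ_i$. Then $\d(v_0,wuv_0)\leq\ell(q)<\ell(p_w)=\d(v_0,wv_0)$, so \eqref{eq:timedif} gives $\phi_w(u)<0$, contradicting $\phi_w\in\cA$ for the index $i$. Hence $\ell(p_w)=\d(v_0,wZ_i)$ and $p_w$ is a $(G,Z)$-geodesic. The main obstacle, as anticipated, is ensuring the existence of a short word $u$ certifying the failure—this is precisely what the fellow-projections hypothesis, via Lemma~\ref{lem:fftpcoset}, provides by relocating the shortening path so that its endpoint is close to $wv_0$ rather than arbitrarily far inside $wZ_i$.
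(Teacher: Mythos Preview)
Your proof is correct and follows essentially the same route as the paper's: both arguments reduce the $(G,Z)$-geodesic condition to the enumeration $\{gZ : wv_0\in gZ\}=\{wZ_1,\dots,wZ_n\}$, invoke Lemma~\ref{lem:fftpcoset} to localize any potential shortening near $wv_0$, and then read the resulting distance inequality as a sign condition on $\phi_w(u)$. The only cosmetic difference is that the paper phrases everything as a single chain of equivalences and cites \eqref{eq:state}, whereas you separate the two directions and use \eqref{eq:timedif} (which is legitimate once you have established that $p_w$ is geodesic via Proposition~\ref{prop:accept_geo}).
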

\begin{proof}
If $\phi_w\in \cA$, then $\phi_w\neq \varrho$, and hence $p_w$ is a geodesic. 

The path $p_w$ is a $(G,Z)$-geodesic if and only if there is $g\in G$ such that $(p_w)_+\in gZ$ and $\d(v_0,gZ)=\ell(p)$, and hence, if and only if there is $i$ such that $(p_w)_+\in wZ_i$ and $\d(v_0,gZ)=\ell(p)$. By the Lemma \ref{lem:fftpcoset}, the latter is equivalent to the non-existence of a path $q$, with $\ell(q)<\ell(p_w)$, $q_+\in wZ_i$, $\d((p_w)_+,q_+)\leq \cte$ and such that $q$ and $p_w$ asynchronously $\cte^2$-fellow travel. 
Finally, by equation \eqref{eq:state}, this  is equivalent to  $\phi_w(u)\geq 0$ for all $u\in \{s  \in X^{\leq k} \mid wsv_0\in wZ_i\}$ which is equivalent to $\phi_w\in \cA$. 
\end{proof}
\begin{cor}\label{cor:regularZgeo}
If $\cL(P)$ is regular then the language 
$$\{w\in X^*\mid p_w \text{ is a $(G,Z)$-geodesic}\}\cap \cL(\cP)$$ is regular.
\end{cor}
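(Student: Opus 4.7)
The plan is to realize the language in question as the intersection of two regular languages and invoke closure of the regular class under intersection. The non-trivial content is already packaged in Lemma \ref{lem:ktype of gZ geo}; what remains is essentially a structural observation about the fftp automaton.

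First, I would verify that the fftp automaton $(X,\cS,\tau,\phi_0,\cA)$ with accepting set $\cA$ defined in \eqref{eq:A} is a genuine finite state automaton. Since $\ga$ is locally finite, the set $(v_0)_-^{-1}$ is finite, hence so is $X$; therefore $X^{\leq \ctee^2}$ is finite, and the non-fail states (being functions from the finite set $X^{\leq \ctee^2}$ to the finite set $[-\ctee^2,\ctee^2]\cap \Z$) form a finite set. Thus $\cS$ is finite, $\cA\subseteq \cS$ is a well-defined subset, and the language
\[
\cL_\cA \coloneq \{w\in X^* \mid \tau(\phi_0,w)\in \cA\} = \{w\in X^* \mid \phi_w\in \cA\}
\]
is regular by definition.

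Second, I would combine this with Lemma \ref{lem:ktype of gZ geo}. That lemma asserts that for $w\in \cL(\cP)$, the path $p_w$ is a $(G,Z)$-geodesic if and only if $\phi_w\in \cA$. Consequently
\[
\cL(\cP)\cap \cL_\cA \;=\; \cL(\cP)\cap \{w\in X^* \mid p_w \text{ is a } (G,Z)\text{-geodesic}\}.
\]
Since $\cL(\cP)$ is regular by hypothesis and $\cL_\cA$ is regular by the previous paragraph, their intersection is regular by the standard closure property of regular languages (realized concretely as the product automaton of the finite state automaton for $\cL(\cP)$ with the fftp automaton).

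There is no real obstacle here; the genuine work has been carried out in constructing the fftp automaton (Definition \ref{defn:fftp-automaton}), establishing the semantic description of its states (Proposition \ref{prop:states}), and identifying the correct accepting set via Lemmas \ref{lem:fftpcoset} and \ref{lem:ktype of gZ geo}. The only point requiring any care is the implicit finiteness of $X$, which I would make explicit by pointing to the local finiteness of $\ga$.
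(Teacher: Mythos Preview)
Your proposal is correct and follows essentially the same approach as the paper: define the regular language $\cL$ accepted by the fftp automaton with accepting set $\cA$, invoke Lemma~\ref{lem:ktype of gZ geo} to identify $\cL\cap\cL(\cP)$ with the desired language, and conclude by closure of regular languages under intersection. Your explicit verification of the finiteness of the state set (via local finiteness of $\ga$) is a welcome clarification that the paper leaves implicit.
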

\begin{proof}
Let $\cL$ be the language accepted by the fftp-automaton with parameter $\cte^2$ and accepting states $\cA$ defined in equation \eqref{eq:A}.
By definition, $\cL$ is a regular language. 
Since the intersection of regular languages is regular, $\cL\cap \cL(P)$ is regular  (see for example \cite[Lemma 1.4.1]{ECHLPT}). 
By Lemma \ref{lem:ktype of gZ geo}, $\cL\cap \cL(P)=\{w\in X^*\mid p_w \text{ is a $(G,Z)$-geodesic}\}\cap \cL(\cP)$.
\end{proof}

\subsection{Rationality of \texorpdfstring{$(G,Z)$-}{ }embeddings}\label{ssec:e(n)}

Within this subsection, we suppose that $Z$ is a finite subgraph of $\ga$. 
By increasing the constant $\cte$, if needed, we can assume that  $\diam(Z)\leq \cte$ and $v_0\in Z$.
We will deduce the rationality of $e_{(G,Z)}$ from the one of $e_{(G,\bullet)}$.
The idea is that the number of $gZ\subseteq \ball_{v_0}(n)$ with $v\in gZ$ is always the same number, say $D$,  except when $v$ is close to the border of the $\ball_{v_0}(n)$ where copies of $Z$ might be not embeddable and we need a finer count. 
We will see that this number can be read from $n-\dist(v_0,v)$ and the $\cte$-type  function defined by 
$\kappa_v(u)=\dist(v_0,u)-\dist(v_0,v)$ for $u\in \ball_{v}(\cte)$.
For $i=0,1,\dots, \ctee$, let
\begin{align*}
\cS_v^i&=\{gZ \mid \exists h\in G \text{ s.t. } hv_0=v,\, hZ=gZ,\, \kappa_v(gZ)\subseteq [-\ctee, i] \}\\
&=\{gZ \mid \exists h\in G \text{ s.t. } hv_0=v,\, hZ=gZ\subseteq \ball_{v_0}(\dist(v_0,v)+i)\}.
\end{align*}
and let $C_v^i=\sharp \cS_v^i$.

We will show now that if $u$ and $v$ have the same $\cte$-type mod $G$, then $C_v^i=C_u^i$. Recall that if $u,v$ have the same $\cte$-type mod $G$ then there is $k\in G$ such that $kv=u$ and $\kappa_v(z)=\kappa_u(kz)$ for all $z\in \ball_v(\cte)$.  Observe that the map $\cS_v^i\to \cS_u^i$ given by $gZ\mapsto kgZ$, is a bijection. Indeed, if $g\in G$ with $gv_0=v$ and $\kappa_v(gZ)\leq i$ then $kgv_0=u$ and for each $z\in gZ$, $\kappa_u(kz)=\kappa_v(z)\leq i$ and thus $\kappa_u(kgZ)\leq i$.
This shows that the map $\cS_v^i\to \cS_u^i$ is well defined, and since it is a restriction from a group action, it is injective. One easily checks that multiplying elements of $\cS_u^i$ by $k^{-1}$ gives the inverse map and hence we have a bijection.  
Thus, if $u\sim_\cte v$  then $C_v^i=C_u^i$.

Note that $C_v^\ctee$ is independent of $v$ and we call this number just by $D$.


Let $g,h\in G$ and suppose that $gZ=hZ$ with $gv_0\neq hv_0$. 
Then there is automorphism of $Z$ that sends $v_0$ to $g^{-1}hv_0$. 
Let $O=G_Z v_0 $ be the orbit of $v_0$ under  $G_Z=\{g\in G \mid gZ=Z\}$, the $G$-stabilizer of $Z$. 
Then, we have that
\begin{align*}
e_{(G,Z)}(n) & \coloneq \sharp\{gZ \mid gZ\subseteq \ball_{v_0}(n)\}\\
&=\dfrac{1}{|O|}\sum_{v\in \ball_{v_0}(n)}\sharp\{gZ \mid \exists h\in G \text{ s.t. }  hv_0=v,\, hZ=gZ\subseteq \ball_{v_0}(n)\}.
\end{align*}

We split the value of $|O| e_{(G,Z)(n)}$ in terms of $\cte$-type of the vertices of $\ball(v_{0})$ and their distance to $v_0$ as follows
\begin{align*}
|O| e_{(G,Z)}(n)&=\sum_{v\in \ball_{v_0}(n-\cte)}\sharp \cS_v^{\cte} +\sum_{i=0}^{\cte-1}\sum_{\sigma \in V/\sim_\cte}\sum_{\stackrel{v \text{ of type $\sigma$}}
{ \d(v_0,v)=n-i}} \sharp \cS_v^{i}\\
& = D \cdot e_{(G,\bullet)}(n-\cte)+\sum_{i=1}^\ctee\sum_{\sigma\in V/\sim_\ctee} C_\sigma^i \cdot \ol{e}^\sigma_{(G,\bullet)}(n-i)
\end{align*}
where $\ol{e}_{(G,\bullet)}^{\sigma}$ is  defined in Definition \ref{defn:auxilariy e}. We recall that the functions $\kappa_v$, and  hence the constants $D$ and $C_\sigma^i$ from the states $\phi_w$ of the fftp-automaton. 

We have expressed $e_{(G,Z)}(n)$ as finitely many sums of the $e_{(G,\bullet)}(n)$ and $\ol{e}_{(G,\bullet)}^\sigma(n)$. It follows  by Theorem \ref{thm:Markov} that $\sum_{n\geq 0} e_{(G,Z)}(n)t^n$ is a sum of finitely many rational functions.

\subsection{Rationality of \texorpdfstring{$(G,Z)$-}{ }intersections}\label{ssec:i(n)}
In this subsection, we no longer assume that $Z$ is finite, but we assume  that  $Z$ has  $\cte$-bounded projections.
As in the previous subsection, the idea is to express
$$i_{(G,Z)}(n)=\sharp\{gZ \mid gZ \cap \ball_{v_0} \neq \emptyset\}$$
in terms of $e_{(G,\bullet)}$ and $e^\sigma_{(G,\bullet)}$. 

For each $v\in V\ga$ consider
$$\cS_v=\sharp\{gZ \mid v\in gZ, \dist(v_0,v)=\dist(v_0,gZ)\}.$$
Since $Z$ has $G$-proper embeddings $\cS_v$ is a finite set. We will see if $u$ and $v$ are in the same $\cte$-type mod $G$, then there is an automorphism of $G$ inducing a bijection between $\cS_v$ and $\cS_u$.  Recall that if $u,v$ have the same $\cte$-type mod $G$ then there is $h\in G$ such that $hv=u$ and $\kappa_v(z)=\kappa_u(hz)$ for all $z\in \ball_v(\cte)$. We claim that the map
$\cS_v\to \cS_u$
given by $gZ\mapsto hgZ$ is a bijection.
This follows easily from Lemma \ref{lem:fftpcoset}, which implies that if $v\in gZ$, then  $\dist(v_0,gZ)=\dist(v_0,v)$ if and only if $\kappa_v(z)\geq 0$ for all $z\in \ball_{v}(\cte)\cap gZ$.



One cannot use $\sum_{v\in V/\sim_\cte} \sharp\cS_{v} \cdot e^{[v]}_{(G,\bullet)}$ to compute $i_{(G,Z)}$ since that might  produce some overcounting.
Given $gZ$, recall that $v_i\in \pi_{gZ}(v_0)$ if and only if
$$v_i\in gZ \text{ and } \dist(v_0,v_i)=\dist(v_0,gZ).$$
By the $\cte$-bounded projections assumptions $\diam(\pi_{gZ}(v_0))\leq \cte$. Therefore, we can read the whole $\pi_{gZ}(v_0)$ from $\kappa_{v}$ with $v\in \pi_{gZ}(v_0)$ since $$\pi_{gZ}(v_0)= \{z\in gZ \mid \kappa_{v_i}(z)=0\}.$$ 
We define 
$$C_{v}= \sum_{gZ\in \cS_v} \frac{1}{\sharp \{z\in gZ \mid \kappa_{v_i}(z)=0\}},$$
and as we have seen, $C_{v}$ only depends on the $\cte$-type of $v$.

We have 
\begin{align*}
\sum_{n\geq 0} i_{(G,Z)}(n) t^n & = \sum_{n\geq 0}\sum_{\sigma \in V\ga/\sim_\cte} {C_\sigma}\,e^{\sigma}_{(G,\bullet)}(n)t^n.
\end{align*}
By Theorem \ref{thm:Markov}, the right-hand side of the above expression (and hence the left-hand side) is a rational function.

\subsection{Proof of Theorem \ref{thm:mainA+}}
\begin{proof}[Proof of Theorem \ref{thm:mainA+}] Item (1) follows from Corollary \ref{cor:regularZgeo} and the fact that the growth series of a regular language is rational. Item (2) follows from the discussion of Subsection \ref{ssec:e(n)} and Item (3) follows from the discussion of Subsection \ref{ssec:i(n)}.
\end{proof}

\section{Schreier coset graphs}

Suppose that $X$ is a symmetric generating set of a group $G$, i.e.  $X=X^{-1}$, then the involution on $X$, extends to an involution on $X^*$, also denoted by $^{-1}$, defined by $x_1x_2\dots x_n\mapsto x_n^{-1}x_{n-1}^{-1}\dots x_1^{-1}$. 
Observe that  $\geol(G/H,X)=\geol(H\backslash G, X)^{-1}$, where $\geol(G/H,X)=\{w\in X^* \mid \ell(w)\leq \ell(u) \, \forall u\in X^*, u\in wH\}$ and $\geol(H\backslash G,X)$ was defined similarly in Theorem \ref{thm:mainB}.
In particular, since the reverse of a regular language is regular \cite[Theorem 1.2.8]{ECHLPT}, we have that $\geol(G/H, X)$ is regular if and only if $\geol(H\backslash G, X)$ is regular.

Let $|gH|_X=\min\{ \ell(w) \mid w\in X^*,\, w\in gH\}$ and analogously $|Hg|_X=\min\{ \ell(w) \mid w\in X^*,\, w\in_G Hg\}$.
Note that $|Hg^{-1}|_X=|gH|_X$ and hence,   for each $n\in \mathbb{N}$ $|\ball_{(H\backslash G,X)}(n)|=|\ball_{(G/H,X)}(n)|$ where
$\ball_{(G/H,X)}(n)=\{gH\in G/H\mid |gH|_X\leq n\}.$

%

One advantage of working with left cosets as subsets of $\ga=\ga(G,X)$  is that the left $G$-action  on $G/H$ is isometric, i.e. $\d_\ga(aH,bH)=\d_\ga(caH,cbH)$ for all $a,b,c\in G$.
 
As noticed before, $H$ has $D$-fellow projections if and only if $gH$ has $D$-fellow projections.

\begin{rem}\label{rem:notationSchreier}
Remind Remark \ref{rem: presentation Cayley}. In our situation, the labeling of paths of the Cayley graph $\ga(G,X)$ coincides with the canonical presentation of paths starting at $v_0=1_G$.
If $Z$ is a subgraph of $\ga$, then we will denote the set of $(G,Z)$-geodesics simply by $\geol(G/Z, X)$. 
We note that this agrees with the notation above. 
\end{rem}

\begin{cor}\label{cor:Schreierfftp}
If $(G,X)$ has \fftp{} and $H\leqslant G$ has fellow projections in $(G,X)$, then $\ga(G,H,X)$ has \fftp{} relative to the collection of paths starting at $H$.
\end{cor}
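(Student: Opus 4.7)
The plan is to reduce the claim to an application of Lemma \ref{lem:fftpcoset} in the Cayley graph by a lift-and-reverse trick that moves the right coset $Hg$, at which a path in the Schreier graph terminates, onto the subgroup $H$ itself. Since the family $\cP$ of all paths starting at $H$ in $\ga(G,H,X)$ is closed under one-edge extensions, it suffices to produce, for every non-geodesic $p\in\cP$, a strictly shorter $q\in\cP$ with the same endpoints that asynchronously fellow travels with $p$; geodesic paths serve as witnesses to themselves.

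Fix such a $p$ with label $w\in X^*$, endpoints $H$ and $Hg$, and length $n$. Using $X=X^{-1}$ and $H=H^{-1}$, one checks directly that $\d_{\ga(G,H,X)}(H,Hg)=\min\{|hg|_X:h\in H\}=\d_{\ga(G,X)}(g,H)$, so the hypothesis that $p$ is non-geodesic gives $n>\d_{\ga(G,X)}(g,H)$. Since $Hw=Hg$, the element $h_0:=gw^{-1}$ lies in $H$, so the lift of $p$ to $\ga(G,X)$ starting at $h_0$ terminates at $g$; call this lift $\tilde p$. Reverse it to obtain a path $\tilde r$ from $g$ to $h_0\in H$ of length $n$.

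Now apply Lemma \ref{lem:fftpcoset} with basepoint $v_0=g$, target set $Z=H$, and $\cP$ the collection of all paths in $\ga(G,X)$ starting at $g$. The hypotheses are satisfied: $\ga(G,X)$ has full fftp (so in particular fftp relative to all paths from $g$), $H$ has $M$-fellow projections by assumption, and $\ell(\tilde r)=n>\d_{\ga(G,X)}(g,H)$. The lemma produces a path $\tilde s$ from $g$ to some $h_1\in H$ with $\ell(\tilde s)=m<n$ that asynchronously $M^2$-fellow travels with $\tilde r$. Reversing, $\tilde q:=\tilde s^{-1}$ goes from $h_1\in H$ to $g$ with $\ell(\tilde q)=m$; since reversal preserves asynchronous fellow traveling, $\tilde p$ and $\tilde q$ still asynchronously $M^2$-fellow travel in $\ga(G,X)$.

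Finally, push $\tilde q$ down via the covering map $\ga(G,X)\to\ga(G,H,X)$ to obtain a path $q$ from $Hh_1=H$ to $Hg$ of length $m$. Since this covering is distance non-increasing, the asynchronous $M^2$-fellow traveling of $\tilde p$ and $\tilde q$ descends to that of $p$ and $q$ in the Schreier graph, proving the corollary with constant $M^2$. The main obstacle that the argument has to sidestep is that the right cosets $Hg$, although they index Schreier-graph vertices, are not left translates of $H$ in the Cayley graph and therefore do not automatically inherit fellow projections from $H$; the lift-and-reverse maneuver (which reflects the bijection $w\mapsto w^{-1}$ between short words representing elements of $Hg$ and short words from $g$ into $H$) is precisely what transports the problem onto the subgroup side, where fellow projections is a hypothesis.
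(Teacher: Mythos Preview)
Your proof is correct and follows essentially the same approach as the paper's. Both arguments invoke Lemma~\ref{lem:fftpcoset} after reversing the label $w$ of a non-geodesic Schreier path, then push the resulting shorter fellow-traveler back down through the covering $\ga(G,X)\to\ga(G,H,X)$; the only cosmetic difference is that you change the basepoint to $g$ and project onto $Z=H$, whereas the paper keeps the basepoint at $1_G$ and projects onto the left coset $Z=w^{-1}H$ (which inherits $M$-fellow projections from $H$ by left-translation invariance), so the two setups differ by a single left translation.
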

\begin{proof}
Let $\cte$ be the \fftp{} constant for $(G,X)$ and the fellow projections constant for $H$ in $(G,X)$.
We will see that $\ga(G,H,X)$ has $\cte^2$-\fftp{}.

Let $p$ be a path in $\ga(G,H,X)$ that is not geodesic.
Let $w\in X^*$ be the label of $p$.
Then the path $p_{w^{-1}}$ in $\ga(G,X)$ from $1$ to $w^{-1}H$ is not an $H$-geodesic. 
By Lemma \ref{lem:fftpcoset}, since $Z=w^{-1}H$ has $\cte$-fellow projections, there is $u\in X^*$ such that $\ell(u)<\ell(w)$, $uH=w^{-1}H$, and $p_u$ and $p_{w^{-1}}$
asynchronously $\cte^2$-fellow travel and $d_X((p_u)_+,(p_{w^{-1}})_+)=|u^{-1}w^{-1}|_X\leq \cte$.
Take $h\in H$ such that $uh=w^{-1}$.
Then the path $p_w$ in $\ga(G,X)$ starting at 1 and labelled by $w$, $\cte^2$-fellow travels with the path $h^{-1}p_{u^{-1}}$ starting at $h^{-1}$ and labelled by $u^{-1}$.
Note that the there is a graph map $\rho\colon \ga(G,X)\to \ga(G,H,X)$ that preserves labels and does not increase distances.
Thus, $\rho(h^{-1}p_{u^{-1}})$ and $\rho(p_{w})$ have the same endpoints, and asynchronously $\ctee^2$-fellow travel.
\end{proof}

\begin{proof}[Proof of Theorem \ref{thm:mainB}]
Item (1) is Corollary \ref{cor:Schreierfftp}. By Theorem \ref{thm:mainA+} (1), $\{w\in X^* \mid p_w 
\text{ is a $(G,Z)$-geodesic}\}\cap\cL(\cP)$ is regular (since $\ga(G,X)$ has fftp relative to $\cP$ the set of all the paths $\cP$ starting at $H$, $\cL(\cP)=X^*$) and it is equal to $\geol(G\backslash H,X)$, and by the discussion above $\geol(H/G,X)$ is also regular. 
Finally, (3) follows  from Theorem \ref{thm:mainA+}~(3).
\end{proof}

\subsection{Shortlex coset transversals}

Fix an order on $X$ and denote by $\leq_{\mathsf{SL}}$ the shortlex order on $X^*$, i.e. $w\leq_{\mathsf{SL}} v$ if and only if $\ell(w)<\ell(v)$ or $\ell(w)=\ell(v)$ and $w$ precedes $v$  lexicographically (with the fixed order on $X$). Let $\mathsf{ShortLex}(G/H,X)=\{w\in X^* \mid w\leq_{\mathsf{SL}}v, \,\forall v\in X^* \text{ with } vH=wH\}$ and similarly $\mathsf{ShortLex}(H\backslash G,X)=\{w\in X^* \mid w\leq_{\mathsf{SL}}v, \, \forall v\in X^* \text{ with } Hv=Hw\}$.

\begin{prop}\label{prop:shortlexcosets}
Suppose that $(G,X)$  is shortlex automatic and $H\leqslant G$ is a subgroup with  bounded projections. Then $\mathsf{ShortLex}(G/H,X)$ is a regular language.
\end{prop}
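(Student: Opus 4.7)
My plan is to combine the regularity of the $(G,H)$-geodesic language (Theorem \ref{thm:mainA+}(1)) with a synchronous fellow-traveler argument for lex-smaller competitors, in the spirit of the classical Neumann--Shapiro construction showing that fftp gives a regular shortlex language.

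First I would pass to a convenient regular superset. Since $(G,X)$ is shortlex automatic, $\mathsf{ShortLex}(G,X)$ is a regular $1_G$-spanning family of geodesics enjoying the synchronous $k$-fellow traveler property for some $k$; in particular (Example \ref{ex:relativefftp}) $\Gamma(G,X)$ has fftp relative to $\mathsf{ShortLex}(G,X)$. Applying Theorem \ref{thm:mainA+}(1) with $Z=H$ (which has fellow projections since it has bounded projections) gives the regularity of
\begin{equation*}
L\coloneq \mathsf{ShortLex}(G,X)\cap\{w\in X^*\mid p_w\text{ is a $(G,H)$-geodesic}\}.
\end{equation*}
Moreover $\mathsf{ShortLex}(G/H,X)\subseteq L$, since every shortlex representative of a coset $wH$ is shortlex for its own element and is a shortest representative of its coset.

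Second, I would establish the following fellow-traveling estimate: for any $v,w\in L$ with $vH=wH$, the paths $p_v,p_w$ synchronously $K$-fellow travel in $\Gamma(G,X)$, where $K=Mk$ and $M$ is the bounded-projection constant of $H$. Both endpoints $v,w$ lie in $\pi_{wH}(1_G)$, whose diameter is at most $M$ by the bounded projections condition applied to $1_G$ and any adjacent vertex, so $d_X(v,w)\le M$. Connecting $v=u_0,\ldots,u_M=w$ by single edges in $\Gamma(G,X)$ and replacing each $u_i$ by its unique shortlex representative, $M$ iterations of synchronous $k$-fellow traveling yield the bound.

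Finally, I would construct a DFA $B$ refining a DFA $A_L=(\cS_L,\delta_L,\ldots)$ for $L$ that also tracks lex-smaller competitors within the $K$-ball. States of $B$ are pairs $(q,T)$ with $q\in\cS_L$ and $T\colon V\ball_{1_G}(K)\to 2^{\cS_L}$; after reading $w$ the intended semantics is $q=q_L(w)$ and
\begin{equation*}
T(u)=\{q_L(v)\mid \ell(v)=\ell(w),\ v<_{\mathrm{lex}} w,\ v=_G wu,\ q_L(v)\neq\text{fail}\}.
\end{equation*}
On reading $x\in X$, the first coordinate becomes $\delta_L(q,x)$; for each $y<x$ with $\delta_L(q,y)\neq\text{fail}$ we add $\delta_L(q,y)$ to $T'(x^{-1}y)$ (discarding if $x^{-1}y\notin\ball_{1_G}(K)$); each existing $q''\in T(u)$ is propagated by every $z\in X$ to $\delta_L(q'',z)\in T'(x^{-1}uz)$, again discarding out-of-ball or fail states. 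A state $(q,T)$ is accepting iff $q$ is accepting in $A_L$ and no $T(u)$ with $u\in H\cap V\ball_{1_G}(K)$ contains an accepting state of $A_L$. Correctness follows from the second step: any lex-smaller $v\in L$ with $vH=wH$ satisfies $w^{-1}v\in H\cap V\ball_{1_G}(K)$ and, by synchronous fellow traveling, survives as a recorded state in $T(w^{-1}v)$ until the end, contributing an accepting state there. The state space $\cS_L\times (2^{\cS_L})^{V\ball_{1_G}(K)}$ is finite, so $B$ is a finite state automaton recognizing $\mathsf{ShortLex}(G/H,X)$. The main obstacle is the fellow-traveling estimate, which is exactly where the bounded projections hypothesis (strictly stronger than fellow projections) is essential: merely asynchronous fellow traveling would not permit a letter-by-letter finite-state tracking of offsets.
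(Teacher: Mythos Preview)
Your proposal is correct and follows essentially the same route as the paper's proof. The paper likewise takes the regular superset $\cL=\geol(G/H,X)\cap\mathsf{ShortLex}(G,X)$ (up to a harmless $(\{\varepsilon\}\cup X)$ factor), uses bounded projections to bound $\d_X(w,w')$ for $w,w'\in\cL$ in the same coset and hence obtain synchronous fellow traveling, and then invokes the standard ``pick the shortlex-minimal representative'' argument by citing \cite[Proof of Theorem~2.5.1]{ECHLPT}; your automaton $B$ is precisely an explicit realization of that cited argument.
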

\begin{proof}
Without loss of generalization, we can assume that $\ctee$ is the fftp constant and the bounded projections constant.
Recall that if $(G,X)$ is shortlex automatic it means that  $\mathsf{Shortlex}(G,X)$ is a geodesic automatic structure.
Let $\cP$ be the paths in $\ga(G,X)$ with label in $\mathsf{Shortlex}(G,X)$. By the Example \ref{ex:relativefftp} $\ga(G,X)$ has fftp relative to $\cP$ and $\cP$ is $1_G$-spanning. 
%

By Corollary \ref{cor:regularZgeo}, we have that $\cL=\geol(G/H,X)\cap \mathsf{Shortlex}(G,X)(\{\varepsilon\}\cup X)$ is a regular language. It is clear that $\mathsf{ShortLex}(G/H,X)\subseteq \cL$. Now suppose that there are $w, w'\in \cL$ with $wH=w'H$. By the bounded projection property $\d_X(w,w')\leq M$ and since $w,w'\in \mathsf{ShortLex}(G,X)$, $w$ and $w'$ $M^2$-fellow travel.

In particular $$\mathsf{Shortlex}(G/H,X)=\{w\in \cL \mid w\leq_{\mathsf{SL}}w', \,\forall w'\in \cL, w'H=wH\}$$
and bearing in mind that asynchronous fellow travel of geodesics imply synchronous fellow travel (with potentially a larger constant), there is an standard argument showing that right-hand set is a regular language (for example \cite[Proof of Theorem 2.5.1]{ECHLPT}).
\end{proof}

\begin{ex}[Redfern] Suppose that $G$ is hyperbolic, $X$ is an ordered generating set and $H$ is a quasi-convex subgroup (and as discussed in the next section, it has bounded projections). Hyperbolic groups are fftp and shortlex automatic, and hence it follows that $\mathsf{ShortLex}(G/H,X)$ is regular.  Redfern in his PhD thesis \cite{Redfern} proved that $(G,H,X)$ is shortlex coset automatic, which implies  that $\mathsf{ShortLex}(H\backslash G,X)$ is regular. 
\end{ex}

\section{Examples}\label{sec:ex}
The falsification by fellow traveler property is known to depend on the generating set \cite{NS} (see also \cite{Elvey-Price}). For some families of fftp graphs, we will provide examples of subgraphs with bounded projections. Recall that a subgraph $Z$ of $\ga$ is {\it $\sigma$-quasi-convex}, if for every $x,y\in Z$ every geodesic path $p$ from $x$ to $y$ lies in the $\sigma$-neighbourhood of $Z$.


\subsection{Quasi-convex subsets of hyperbolic spaces}
It follows immediately  from the thin-triangle definition, that if a graph is $\delta$-hyperbolic, then it has $\delta$-fftp. The following follows from \cite[Corollary 2.3]{CDP}.

\begin{lem}\label{lem:hyper->boundedproj}
Let $\ga$ be a $\delta$-hyperbolic graph, and $Z$ a $\sigma$-quasi-convex subset. 
There exists a constant $k$, depending only of $\delta$ and $\sigma$ such that for any $x,y\in Z$ and any $z_x\in \pi_Z(x)$, $z_y\in \pi_Z(y)$ one has that $\dist(z_x,z_y)\leq k+ \dist(x,y)$.
\end{lem}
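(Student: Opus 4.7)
My plan: The statement as printed (``for any $x,y\in Z$'') is vacuous, because for $x\in Z$ one has $\pi_Z(x)=\{x\}$ so $z_x=x,\ z_y=y$ and the inequality reduces to $\dist(x,y)\le k+\dist(x,y)$. I read it charitably for arbitrary $x,y\in V\ga$ -- which is the content actually needed to deduce $\ctee$-bounded projections from fellow projections -- and I will produce $k=k(\delta,\sigma)$ with $\dist(z_x,z_y)\le \dist(x,y)+k$. The approach is the standard thin-quadrilateral argument.

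\emph{Step 1 (setup).} Fix geodesic segments $[x,z_x],[z_x,z_y],[z_y,y],[y,x]$, which together form a geodesic quadrilateral. In a $\delta$-hyperbolic graph, geodesic quadrilaterals are $2\delta$-thin: every point of one side lies within $2\delta$ of the union of the other three (iterate $\delta$-slimness of triangles). Parametrise $[z_x,z_y]$ by arc length as $c:[0,L]\to \ga$ with $L=\dist(z_x,z_y)$.

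\emph{Step 2 (projection identity).} The one conceptual input is that for every $p\in[x,z_x]$, $\dist(p,Z)=\dist(p,z_x)$. The $\le$ direction is trivial; for $\ge$, use $\dist(x,z_x)=\dist(x,Z)\le \dist(x,p)+\dist(p,Z)$ together with $\dist(x,p)+\dist(p,z_x)=\dist(x,z_x)$. Since $z_x,z_y\in Z$ and $Z$ is $\sigma$-quasi-convex, the entire geodesic $[z_x,z_y]$ lies in $\nei{Z}{\sigma}$, so $\dist(c(t),Z)\le \sigma$ for every $t$. Hence if $c(t)$ is $2\delta$-close to some $p\in[x,z_x]$, then
$\dist(p,z_x)=\dist(p,Z)\le \dist(p,c(t))+\dist(c(t),Z)\le 2\delta+\sigma,$
so $t=\dist(z_x,c(t))\le \dist(z_x,p)+\dist(p,c(t))\le \sigma+4\delta.$
Symmetrically, $c(t)$ being $2\delta$-close to $[z_y,y]$ forces $L-t\le \sigma+4\delta$.

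\emph{Step 3 (conclusion via the opposite side).} If $L\le 2(\sigma+4\delta)$ we are done with any $k\ge 2\sigma+8\delta$. Otherwise, for every $t\in(\sigma+4\delta,\ L-\sigma-4\delta)$, the thin-quadrilateral property forces $c(t)$ to be $2\delta$-close to the remaining side $[x,y]$, so we may pick $\phi(t)\in[x,y]$ with $\dist(c(t),\phi(t))\le 2\delta$. For any two such parameters $t,s$, the triangle inequality gives $|t-s|=\dist(c(t),c(s))\le 4\delta+\dist(\phi(t),\phi(s))\le 4\delta+\dist(x,y).$ Letting $t,s$ approach the endpoints of the middle interval yields $L-2(\sigma+4\delta)\le 4\delta+\dist(x,y)$, i.e.\ $L\le \dist(x,y)+2\sigma+12\delta$, so $k:=2\sigma+12\delta$ works.

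I do not expect a real obstacle: the only genuine hyperbolic-geometry content is the projection identity of Step~2, and after that the proof reduces to tracking constants across a $2\delta$-thin quadrilateral. (In the combinatorial setting one may need to round $\sigma+4\delta$ to an integer parameter, at the cost of enlarging $k$ by a small additive constant, but this is a purely cosmetic adjustment.)
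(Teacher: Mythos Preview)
Your proof is correct, and your observation about the typo (the statement should read ``for any $x,y\in V\ga$'' rather than ``for any $x,y\in Z$'') is exactly right; as written the lemma is vacuous, and the version you prove is the one actually used to derive bounded projections in the subsequent corollary.

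As for comparison with the paper: there is nothing to compare against, because the paper does not prove this lemma at all---it simply records it as a consequence of \cite[Corollary 2.3]{CDP}. Your thin-quadrilateral argument is the standard proof one finds in sources such as \cite{CDP}, so you have supplied a self-contained argument where the paper chose to cite. The key step (your Step~2 projection identity, that points on a minimising geodesic $[x,z_x]$ have $z_x$ as a nearest point of $Z$) is precisely the geometric content that makes the argument work, and your constant-tracking in Step~3 is clean.
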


\begin{cor} 
Quasi-convex subgroups of hyperbolic groups have bounded projections.
\end{cor}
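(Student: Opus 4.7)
The plan is to obtain the corollary as a direct application of Lemma~\ref{lem:hyper->boundedproj} to the Cayley graph setting, with essentially no further work beyond matching the definitions.

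First, I would set up notation. Let $G$ be a hyperbolic group and fix a finite generating set $X$ so that the Cayley graph $\ga = \ga(G,X)$ is $\delta$-hyperbolic for some $\delta \geq 0$. Let $H \leq G$ be a $\sigma$-quasi-convex subgroup; by definition this means that the subset $H \subseteq V\ga$ (which spans a subgraph of $\ga$) is $\sigma$-quasi-convex in the sense used throughout the section.

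Next I would invoke Lemma~\ref{lem:hyper->boundedproj} with $Z = H$, producing a constant $k = k(\delta,\sigma)$ such that for any $x,y \in V\ga$ and any $z_x \in \pi_H(x)$, $z_y \in \pi_H(y)$ one has $\d(z_x,z_y) \leq k + \d(x,y)$. The goal is to verify the $\cte$-bounded projections condition from Definition~\ref{def:proj}: for every pair of adjacent vertices $u,v \in V\ga$ (so $\d(u,v) = 1$), the diameter of $\pi_H(u) \cup \pi_H(v)$ should be bounded by a universal constant.

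The verification is a three-line computation. Given adjacent $u,v$, take any points $z_u, z_u' \in \pi_H(u)$ and $z_v, z_v' \in \pi_H(v)$. Applying the lemma with $x = y = u$ gives $\d(z_u, z_u') \leq k$, and similarly $\d(z_v, z_v') \leq k$; applying it with $x = u$, $y = v$ gives $\d(z_u, z_v) \leq k+1$. Therefore the diameter of $\pi_H(u) \cup \pi_H(v)$ is at most $k+1$, so $H$ has $(k+1)$-bounded projections in $\ga(G,X)$. Since $k$ depends only on $\delta$ and $\sigma$, the constant is uniform over all pairs of adjacent vertices, completing the proof.

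There is no real obstacle here — the entire content is packaged in the previous lemma. The only thing worth being careful about is reading off the diameter bound from a pairwise distance bound, which is immediate once one applies the lemma in the two cases $\d(x,y) = 0$ and $\d(x,y) = 1$.
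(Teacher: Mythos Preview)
Your proposal is correct and matches the paper's approach: the corollary is stated immediately after Lemma~\ref{lem:hyper->boundedproj} without a separate proof, so the intended argument is precisely the direct application you describe. Your unpacking of the diameter bound (applying the lemma in the cases $\d(x,y)=0$ and $\d(x,y)=1$) is exactly what is needed and is implicit in the paper.
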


As we have seen in the proof of Theorem \ref{thm:mainA+}, the fftp constant and the bounded projection constant determine the size of the digraph codifying the random Markov geodesic combing. 
We will use now the idea of \cite[Lemma 8.2]{EpsteinGrowth}, to see that in the case of $\delta$-hyperbolic graphs the construction of the Markov geodesic combing depends mainly on $\delta$ and no other parameter.

The idea is that if $Z$ is a subset of $\ga$ with $D$-bounded projections and $v\in V\ga$ is a vertex far away from $Z$, then $Z$-geodesics from $v$ $\delta$-fellow travel except from a constant time depending on $D$. 

We will use the following well known fact.
\begin{lem}\label{lem:triangle Epstein}
Let $p$, $q$ be geodesics in a $\delta$-hyperbolic spaces with $p_-=q_-$. Then there is $M=M(\delta)$ and $R=R(\dist(p_+,q_+),\delta)$ such that the initial subpaths $p'$ and $q'$ of $p$ and $q$ of length $\min \{\ell(p),\ell(q)\}-\cteee$ synchronously $\ctee$-fellow travel.
\end{lem}
\begin{proof}
 Let $t$ be a geodesic from $p_+$ to $q_+$. Then $p,q,r$ form a geodesic triangle, and as we are in hyperbolic space, the triangle has a $\delta'$-center (where $\delta'$ only depends on $\delta$), that is, there is a point $x$ that is at distance $\delta'$ of the other three sides. 
Say that $x_p\in p$, $x_q\in q$ and $x_t\in t$ are three vertices at distance at most $2\delta'$ of each other. It follows that $x_p$ (resp. $x_q$) is at distance at most $R=\dist(p_+,q_+)+2\delta'$ of $p_+$ (resp. $x_q$). In particular the subpath of $p$ from $p_-$ to $x_p$ and the subpath of $q$ from $q_-$ to $x_q$ $M$-synchronously fellow travel with a constant depending only on $\delta$ (one can take $M=2\delta'\cdot \delta$).
\end{proof}

%

\begin{proof}[Proof of Theorem \ref{thm:mainC}]
We now assume that $G$ is an hyperbolic group, $X$ is a finite symmetric  generating set, $\ga=\ga(G,X)$ the Cayley graph of $G$ and $Z$ is a quasi-convex subgroup of $G$ with $D$-bounded projections. Let $\ctee=M(\delta)$ and $\cteee(D,\delta)$ the constants of the previous lemma. Without loss of generality we can assume that $D<\ctee$.

We will show that there is a polynomial $Q_X(t)\in \Z[t]$, only depending on $G$ and $X$,  such that $Q_X(t)\cdot (\sum_{n\geq 0} i_{(G,Z)}(n)t^n)$ is a polynomial. 

Let $\ol{i}_{(G,Z)}(n)=i_{(G,Z)}(n)-i_{(G,Z)}(n-1)$ for $n\geq 0$ and setting $i_{(G,Z)}(-1)=0$. Observe that $$(1-t)\sum_{n\geq 0} i_{(G,Z)}(n)t^n=\sum_{n\geq 0}\ol{i}_{G,Z}(n)t.$$
Thus, it is enough to show that $Q_X(t)(\sum_{n\geq 0} \ol{i}_{(G,Z)}(n)t^n)$ is a polynomial

Let $n> \cteee$ and 
$$\cS(n)=\{gZ \mid gZ\cap \ball_{1_G}(n)\neq \emptyset =gZ \cap \ball_{1_G}(n-1)\}.$$
Note that $\ol{i}_{(G,Z)}(n)$ is the cardinality of  $\cS(n)$.
Let $v\in \ga$, with $\d(1_G,v)=n-\cteee$ and let 
$$\cS_v(n)=\{gZ\in \cS(n)\mid \exists p \text{ $gZ$-geodesic  from }v_0 \text{ passing throug } v\}.$$
We claim that $\sharp \cS_v(n)$ only depends on the $2\cte$-type of $v$ mod $G$. 
Indeed, suppose that $u\sim_\cte v$, then there is $h\in G$ and such that $hv=u$ and $\kappa_v(z)=\kappa_u(hv)$ for all $z\in \ball_v(2\ctee)$. Let $gZ\in \cS_v(n)$, we will show that $hgZ\in \cS_u(n)$.  
The argument is very similar to the one of Lemma \ref{lem:cone_types}. 
Let $p$ be a $gZ$-geodesic from $v_0$ to $gZ$ passing through $v$. 
We can divide $p$ as $p=p_1p_2$ where $(p_1)_+=v=(p_2)_-$. 
Observe that $q_2=h p_2$ is a geodesic path of length $R$ from $hv=u$ to $hgZ$. 
Let $q_1$ be any geodesic from $v_0$ to $u$. 
We claim that $q=q_1q_2$ is a $hgZ$-geodesic. In fact, we already know by Lemma \ref{lem:cone_types} that $q$ is geodesic. 
If $q$ is not $hgZ$ geodesic, by Lemma \ref{lem:fftpcoset}, there is a path $r$ from $v_0$ to $hgZ$, shorter than $q$  with $\dist(q_+,r_+)\leq \ctee$. Without loss of genarlity, we can assume that $r$ is geodesic.   
Let $r=r_1r_2$ with $r_2$ a subpath of lenth $\cteee$. 
Note that $\ell(q)>\ell(r)\geq \ell(q)-M$. Thus $\min\{\ell(q),\ell(r)\}-\ctee\geq \ell(q_1)-\ctee$ and we get that $\dist((q_1)_+,(r_1)_+)\leq 2\ctee$, and therefore, $(r_1)_+=(r_2)_-\in\ball_u(2\ctee)$. 
Then 
$$\kappa_u((r_1)_+)=\dist(v_0,(r_1)_+)-\dist(v_0,u)=\dist(v_0,(r_1)_+)+R-(\dist(v_0,u)+R)\leq \ell(r)-\ell(q)<0.$$
Now consider any geodesic path $p'_1$ from $v_0$ to $h^{-1}(r_1)_-$ and $p'_2=h^{-1}r_2$. Let $p'=p_1'p_2'$ and note that $p'_+=h^{-1}(r_2)_+\in gZ$. Moreover, 
$$\ell(p')-\ell(p)=\ell(p_1')-\ell(p_1)+\ell(p_2')-\ell(p_2)=\ell(p_1')-\ell(p_1)=\kappa_v((p_1)'_+)=\kappa_u((r_1)_+)<0$$
and thus $p$ is not a $gZ$-geodesic and we get a contradiction. Thus q is a $hgZ$-geodesic and hence, since $\ell(q)=\ell(p)=n$, $hgZ\in \cS_u(n)$.  Similarly, if $gZ\in \cS_u(n)$ then $h^{-1}gZ\in \cS_v(n)$ and thus $\cS_v(n)$ and $\cS_u(n)$ have the same cardinality.

Given $gZ\in \cS_v(n)$, there might be $v_1,\dots, v_s\in V\ga$ such that $gZ\in \cS_{v_i}(n)$. We call this vertices the {\it $\cteee$-parents} of $gZ$. Note that by Lemma \ref{lem:triangle Epstein}, $\dist(v_i,v)\leq M$ for $i=1,\dots, s$.  Again, we claim that the number of $\cteee$-parents of $gZ\in \cS_v(n)$ only depends on the  $2\ctee$-type if $v$. Indeed, there are paths $r_1,\dots r_s$ of length $\cteee$ with $(r_i)_-=v_i$ and $(r_i)_+\in gZ$ with $\kappa_v(v_i)=0.$ Now if $u=hv$ and $\kappa_v(z)=\kappa_u(hz)$ for all $z\in \ball_{v}(2\ctee)$, it follows easily that $hgZ\in \cS_{hv_i}(\dist(v_0,u)+\cteee)$ for $i=1,\dots,s$. So the number of $\cteee$-parents of $gZ\in \cS_v(\dist(v_0,v)+\cteee)$ is at least the number of $\cteee$-parents of $hgZ\in\cS_u(\dist(v_0,u)+\cteee)$. A symmetric argument shows that indeed the numbers are equal.

Let $$C_v(\cteee)=\sum_{gZ\in \cS_v(\dist(v_0,v)+\cteee)}\dfrac{1}{\sharp\{\text{$\cteee$-parents of gZ}\}}.$$
Now we have that
$$\sum_{n\geq \cteee}\ol{i}_{(G,Z)}(n)t^n =\sum_{n\geq \cteee}\sum_{\sigma \in V\ga/\sim_{2\ctee} } C_v(\cteee)\cdot \ol{e}^{\sigma}_{(G,\bullet)}(n-\cteee) t^n.$$

By Theorem \ref{thm:Markov}, there are  polynomials $P_\sigma$ and $Q_\sigma$ over $\mathbb{Z}[t]$ such that 
$P_{\sigma}(t)/Q_\sigma(t)=\sum_{n\geq 0}\ol{e}^{\sigma}_{(G,\bullet)}(n)t^n$
and thus 
$\dfrac{P_{\sigma}(t)t^\cteee }{Q_{\sigma}(t)}=\sum_{n\geq \cteee}\ol{e}^{\sigma}_{(G,\bullet)}(n-\cteee) t^n$. 
Therefore 
$$\sum_{n\geq 0}\ol{i}_{(G,Z)}t^n=\sum_{i=0}^{\cteee-1}\ol{i}_{(G,Z)}t^n + \sum_{\sigma \in V\ga/\sim_{2\ctee}} C_\sigma(R) \dfrac{P_{\sigma}(t)t^\cteee }{Q_{\sigma}(t)} $$ and we can take $Q_X(t)=\prod_{\sigma} Q_\sigma (t)$. This completes the proof of (1).

To show (2), it follows from the previous computation, Theorem \ref{thm:Markov} and  \cite[Proposition 3.5.]{WiseFuter} that $\limsup_{n\to \infty}\sqrt[n]{i_{(G,Z)}(n)}=\max\{\rho_{\mathbf{A}},1\}$ where $\mathbf{A}$ is the matrix provided by Theorem \ref{thm:Markov} and $\rho_{\mathbf{A}}$ is the Perron-Frobenius eigenvalue. If $G$ is non-elementary and $H$ is of infinite index, then $\ga(G,H,X)$ grows exponentially (Kapovich \cite[Theorem 1.5]{Kapovich}  shows there is a quasiconvex free subgroup $F\leq G$ of rank two such that $H^g\cap F=\{1\}$ for every $g\in G$) and hence $\rho_{\mathbf{A}}>1$ and thus we have that $\limsup_{n\to \infty}\sqrt[n]{i_{(G,H)}(n)}=\rho_{\mathbf{A}}>1$.

Recall that Koubi \cite{Koubi} showed that if $G$ is non-elementary, then there is $\lambda>1$ such that for any finite generating $Y$ set of $G$ it holds that $\limsup_{n \to \infty}\sqrt[n]{|\ball_Y(n)|}>\lambda$. Thus, we get that $\rho_{\mathbf{A}}>\lambda$ for all generating sets $X$.
%
%
%
\end{proof}

\subsection{Parabolic subgroups of relatively hyperbolic groups}
Let $G$ be a group hyperbolic relative to a collection of subgroups $\{H_\omega\}_{\omega \in \Omega}$. The subgroups $H_\omega$, $\omega\in \Omega$ are called {\it parabolic subgroups} (see \cite{Osin06} for the details). Let $\cH=\cup H_\omega$. 

In this subsection we will make intense use of results of \cite{AC3} where it is shown that  fftp is preserved under relative hyperbolicity in the following way. 
Suppose that $Y$ is a finite generating set for $G$.
 There is a finite subset $\cH'\subseteq \cH$ such that if $X$ is a finite generating satisfying that $Y\cup \cH \subseteq X\subseteq Y\cup \cH$ and that $\ga(H_\omega, X\cap H_\omega)$ is fftp, then $\ga(G,X)$ is fftp. We remark that the condition $\ga(H_\omega, X\cap H_\omega)$ is fftp, it is relatively easy to achieve, since if $H_\omega$ has fftp for some generating set, then any finite generating set of $H_\omega$ can be enlarged to have fftp (see \cite[Proposition 3.2]{AC3}).

Let $p$ be a path in the Cayley graph $\ga(G,X\cup \cH)$. An {\it $H_\lambda$-component} of $p$, is a subpath $s$ of $p$ with the property that the label of the path $s$ is an element of the free monoid $H_\lambda^*$ and it is not properly contained in any other subpath of $p$ with this property.
 Two components ${s}$ and ${r}$ (not necessarily in the same path) are {\it connected} if both are $H_\omega$-components for
some $\omega\in \Omega$ and $({s}_-)H_\omega=({r}_-)H_\omega$. A component in a closed path that is not connected to other component is called {\it isolated}.

We will use the following result, which is a version of \cite[Proposition 3.2]{OsinPF}.
\begin{lem}\label{lem:gon}
Let $G$ be hyperbolic relative to $\{H_\omega\}_{\omega \in \Omega}$ and $X$ a finite generating set of $G$. 
There exists $D=D(G,X,\lambda,c)>0$ such that the following hold.
Let $\mathcal{P}=p_1p_2\cdots p_n$ be an $n$-gon in $\ga(G,X\cup \cH)$ and  $I$ a distinguished subset  of sides of $\mathcal{P}$ such that if $p_i\in I$, $p_i$ is an isolated component in $\mathcal{P}$, and
if $p_i\notin I$, $p_i$ is a $(\lambda,c)$-quasi-geodesic. Then
$$\sum_{i\in I}\d_X((p_i)_-,(p_i)_+)\leq D n.$$
\end{lem}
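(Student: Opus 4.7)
The plan is to reduce the statement to the geodesic case already established in \cite[Proposition 3.2]{OsinPF}, which gives the same inequality under the stronger hypothesis that every $p_i$ with $i\notin I$ is a geodesic in $\ga(G,X\cup\cH)$ (with constant $D_0=D_0(G,X)$). Given the general polygon $\mathcal{P}$, for each $i\notin I$ choose a geodesic $q_i$ in $\ga(G,X\cup\cH)$ from $(p_i)_-$ to $(p_i)_+$, and form the $n$-gon $\mathcal{P}'=p_1'\cdots p_n'$ with $p_i'=q_i$ for $i\notin I$ and $p_i'=p_i$ for $i\in I$. The sides indexed by $I$ are unchanged, and if they remained isolated $H_\omega$-components of $\mathcal{P}'$, Osin's result applied to $\mathcal{P}'$ would immediately conclude the proof.

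The technical core is to control the possible loss of isolation after this replacement. For this, invoke the standard quasi-geodesic-versus-geodesic component-matching for relatively hyperbolic groups (\cite[Proposition 3.15]{Osin06}, or the variant proved in \cite{AC3}): there exists $\epsilon=\epsilon(G,X,\lambda,c)$ such that any $H_\omega$-component of $q_j$ not connected to an $H_\omega$-component of $p_j$ has $\d_X$-length at most $\epsilon$. Combined with the hypothesis that each $p_i$ with $i\in I$ is isolated in $\mathcal{P}$, this forces the following dichotomy for any $H_\omega$-component $q'$ of some $q_j$ that becomes connected in $\mathcal{P}'$ to some $p_i$ with $i\in I$: the component $q'$ cannot be matched with any component of $p_j$, since otherwise its matched component in $p_j$ would share the $H_\omega$-coset of $p_i$ and hence be connected to $p_i$ in $\mathcal{P}$, contradicting the isolation hypothesis. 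Consequently $\d_X(q'_-,q'_+)\le\epsilon$ for every such bad component.

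To conclude, promote every such short bad component of each $q_j$ to a distinguished isolated side of $\mathcal{P}'$ (or, equivalently, shortcut it via an $X$-path of length $\le\epsilon$ and subdivide the resulting side into its geodesic constituents), producing a polygon $\mathcal{P}''$ in which every $p_i$ with $i\in I$ is now a genuinely isolated component and every non-distinguished side is geodesic in $\ga(G,X\cup\cH)$. Applying Osin's geodesic result to $\mathcal{P}''$ then yields the desired bound. The main obstacle is the amortised counting ensuring that $\mathcal{P}''$ has at most $Cn$ sides for a uniform $C=C(G,X,\lambda,c)$; this uses that distinct $p_i$ with $i\in I$ lie in distinct $H_\omega$-cosets (by isolation in $\mathcal{P}$) and that each geodesic $q_j$ traverses any given coset at most once as an $H_\omega$-component, so the new bad sides introduced while modifying each $q_j$ are naturally paired with the $I$-components they were about to connect to. Feeding these uniform estimates into Osin's bound for $\mathcal{P}''$ gives the final constant $D=D(G,X,\lambda,c)$.
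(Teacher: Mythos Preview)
The paper does not prove this lemma; it is simply stated as ``a version of \cite[Proposition~3.2]{OsinPF}'' and used as a black box. So there is no argument in the paper to compare yours with---the only question is whether your reduction is correct.

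The strategy is reasonable, but the amortised counting step has a genuine gap. Your pairing assigns to each bad component of some $q_j$ the unique $p_i$ (with $i\in I$) sharing its $H_\omega$-coset. For a \emph{fixed} $j$ this map is injective, since a geodesic $q_j$ meets each coset in at most one component; but across different $j$'s it is not. Nothing prevents every $q_j$ with $j\notin I$ from contributing a short component in the coset of a given $p_i$, so the total number of bad components is only bounded by $|I|\cdot(n-|I|)$. Consequently your polygon $\mathcal{P}''$ may have on the order of $n^{2}$ sides rather than $Cn$, and Osin's geodesic bound applied to $\mathcal{P}''$ yields only $\sum_{i\in I}\d_X((p_i)_-,(p_i)_+)\le D_0\cdot O(n^{2})$, which is too weak.

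There is no obvious local repair within this framework: knowing that each bad component has $\d_X$-length at most $\epsilon$ controls what you delete, but gives no bound on $\d_X((p_i)_-,(p_i)_+)$ for those $i\in I$ whose $p_i$ fails to remain isolated in $\mathcal{P}'$, and it does not reduce the number of subdivisions required. You should first check whether \cite[Proposition~3.2]{OsinPF} is in fact already stated for quasi-geodesic sides (in which case no reduction is needed at all); if not, consult the argument there or the treatment in \cite{AC3}, which do not proceed via this subdivision.
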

\begin{lem}\label{lem:bp_parabolic}
Let $G$ be  hyperbolic relative to $\{H_\omega\}_{\omega\in \Omega}$ and $Y$ a finite symmetric generating set. 
Then there is a finite subset $\cH'\subseteq \cH$ such that for every finite generating set $X$ of $G$ satisfying that $Y\cup \cH'\subseteq X\subseteq Y\cup \cH$,  any  $H_\omega$ has bounded projections (and hence fellow projections) in $\ga(G,X)$.
\end{lem}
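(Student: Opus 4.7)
The strategy is to apply the Polygon Lemma \ref{lem:gon} in the relative Cayley graph $\ga(G, X \cup \cH)$ to a quadrilateral whose four sides are: the edge $u \to v$; a ``shortcut'' path $p_2$ derived from an $X$-geodesic from $v$ to a closest point $z_v \in \pi_{H_\omega}(v)$; the single $\cH$-edge $p_3$ from $z_v$ to a closest point $z_u \in \pi_{H_\omega}(u)$; and the analogous shortcut $p_4$ from $z_u$ back to $u$. Here the ``shortcut'' of an $X$-geodesic is obtained by replacing each maximal $H_\omega$-labeled subpath by the corresponding single $\cH$-edge.

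For the choice of $\cH'$, I would combine the finite subset of $\cH$ supplied by \cite{AC3} with a finite generating set for each parabolic subgroup $H_\omega$ (only finitely many conjugacy classes of parabolics need be considered). The role of these generating sets is to ensure that the shortcut operation produces uniform $(\lambda, c)$-quasi-geodesics in $\ga(G, X \cup \cH)$, with $(\lambda, c)$ depending only on $G$ and $\cH'$; this is the standard ``relative quasi-geodesic'' fact in the theory of relatively hyperbolic groups. With this choice, $p_1$ is tautologically a $(1,0)$-quasi-geodesic, and $p_2, p_4$ are $(\lambda, c)$-quasi-geodesics with uniform constants.

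The crux of the argument is to verify that $p_3$ is an isolated $H_\omega$-component of the quadrilateral $P = p_1 p_2 p_3 p_4$. This uses the closest-point property of $z_v, z_u$: if $w$ is any vertex of an $X$-geodesic from $v$ to $z_v$ lying in $H_\omega$, then the inequality $\d_X(v, w) \ge \d_X(v, H_\omega) = \d_X(v, z_v)$ combined with the equality $\d_X(v, w) + \d_X(w, z_v) = \d_X(v, z_v)$ forces $w = z_v$, and symmetrically for $p_4$. In particular, no $H_\omega$-subpath of $p_2$ or $p_4$ starts or ends in the coset $H_\omega$ other than at $z_v, z_u$, so the maximal $H_\omega$-component of $P$ containing $p_3$ is just $p_3$ itself; the trivial cases $u \in H_\omega$ or $v \in H_\omega$ give $\d_X(z_u, z_v) \le 1$ and may be disposed of separately. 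Applying Lemma \ref{lem:gon} with $n = 4$ and $I = \{3\}$ then yields $\d_X(z_v, z_u) \le 4D$, the desired uniform bound on projections. The main obstacle is establishing the uniform $(\lambda, c)$-quasi-geodesic constants for $p_2, p_4$ across all admissible $X$; this is precisely where the choice of $\cH'$, and in particular the inclusion of generating sets for the $H_\omega$, does the work.
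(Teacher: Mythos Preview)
Your approach is correct and somewhat more direct than the paper's. The paper introduces auxiliary points $\hat z_a \in \pi_{H_\omega}^{X\cup\cH}(a)$ and $\hat z_b \in \pi_{H_\omega}^{X\cup\cH}(b)$ (projections in the \emph{relative} Cayley graph), and proceeds in two steps: first a geodesic $4$-gon in $\ga(G,X\cup\cH)$ with sides $q_a,q_b,e,f$ bounds $\d_X(\hat z_a,\hat z_b)$ by $3D$; then a triangle with the shortcut $r_a$ as one side bounds $\d_X(z_a,\hat z_a)$ by $2D$ (and symmetrically for $b$), yielding $7D$ in total. You collapse this into a single quadrilateral by using the shortcut paths themselves as the two long sides, obtaining $4D$ directly. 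Both arguments rest on the same technical input from \cite{AC3}: the existence of $\cH'$ so that $X$-geodesics admit $(\lambda,c)$-quasi-geodesic ``derived'' words $\hat w$ in $(X\cup\cH)^*$ whose vertices are a subset of the original geodesic's vertices. The paper's two-step version has the mild advantage that the first $4$-gon uses genuine geodesics rather than quasi-geodesics, but this buys nothing essential.

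Two small imprecisions are worth noting. First, your description of the shortcut as ``replacing each maximal $H_\omega$-labeled subpath'' (for the fixed $\omega$) will not in general produce a quasi-geodesic in $\ga(G,X\cup\cH)$: long $H_\mu$-subpaths for other $\mu$ would still be present. What you actually need is the construction of \cite[Lemma~5.3]{AC3}, which compresses \emph{all} parabolic pieces and has the vertex-subset property you use for the isolation argument. Second, to conclude that $p_3$ is an isolated component you should also rule out that $p_3$ is simultaneously an $H_\mu$-component for some $\mu\neq\omega$ and connected to another $H_\mu$-component; the paper handles this with the almost-malnormality constant $m$ (if $\d_X(z_u,z_v)>m$ then the label $z_v^{-1}z_u$ lies in no $H_\mu$ with $\mu\neq\omega$, while if $\d_X(z_u,z_v)\le m$ there is nothing to prove). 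With these caveats your argument goes through.
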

\begin{proof}
From  the Generating Set Lemma \cite[Lemma 5.3]{AC3}, 
there is a finite set $\cH'$ of $\cH$ and constants $\lambda\geq 1, c\geq 0$ with the property that
for any finite symmetric generating set $X$ such that $Y\cup \cH'\subseteq X\subseteq Y\cup \cH$
one has that for any geodesic word $w\in X^*$ there is a word $\wh{w}$ in $(X\cup \cH)^*$
with $w=_G\wh{w}$, $\wh{w}$ labels a $(\lambda,c)$-quasi-geodesic in $\ga(G,X\cup \cH)$  
and such that any prefix of $\wh{w}$ represents a prefix of $w$ in the following way: 
if $w\equiv x_1\dots x_n$ and $\wh{w}\equiv z_1\dots z_m$, $m\leq n$, 
there is an increasing function $f\colon \{1,\dots, m\}\to \{1,\dots,n\}$ such that $z_1\dots z_{i}=_G x_1\dots x_{f(i)}$. In particular, the set of group elements that appears as vertices in the path in $\ga(G, X\cup \cH)$ starting at $g$ and labeled by $\wh{w}$ is a subset of the group elements that appears as vertices in the path in $\ga(G,X)$ starting at $g$ and labeled by $w$.

Fix a symmetric generating set $X$ as above. Fix $\omega\in \Omega$.  
We will use $\pi^X$ to denote projections in the space $\ga(G,X)$ and $\pi^{X\cup \cH}$ to denote projections in $\ga(G,X\cup \cH)$. 
Let $a,b\in G$, with $\d_X(a,b)=1$. 
Let $z_a\in \pi^X_{H_\omega}
(a)$ and $z_b\in \pi^X_{H_\omega}(b)$ and let $\wh{z_a}\in \pi_{H_\omega}^{X\cup \cH}$ and $\wh{z_b}\in \pi_{H_\omega}^{X\cup \cH}(b)$.

We will first show that $\d_X(\wh{z_a},\wh{z_b})$ is uniformly bounded, and then that $\d_X(z_a,\wh{z_a})$ and $\d_X(z_b,\wh{z_b})$ are uniformly bounded.

\begin{figure}[ht]
\labellist
\pinlabel $Z$ at 205 30
\pinlabel $\wh{z}_a$ at 165 30
\pinlabel $\wh{z}_b$ at 245 30
\pinlabel $a$ at 175 240
\pinlabel $b$ at 228 240
\pinlabel $e$ at 203 245
\pinlabel $q_a$ at 188 150
\pinlabel $q_b$ at 240 150
\pinlabel $r_a$ at 137 150
\pinlabel $z_a$ at 115 20
\endlabellist
\centering
\includegraphics[scale=0.7]{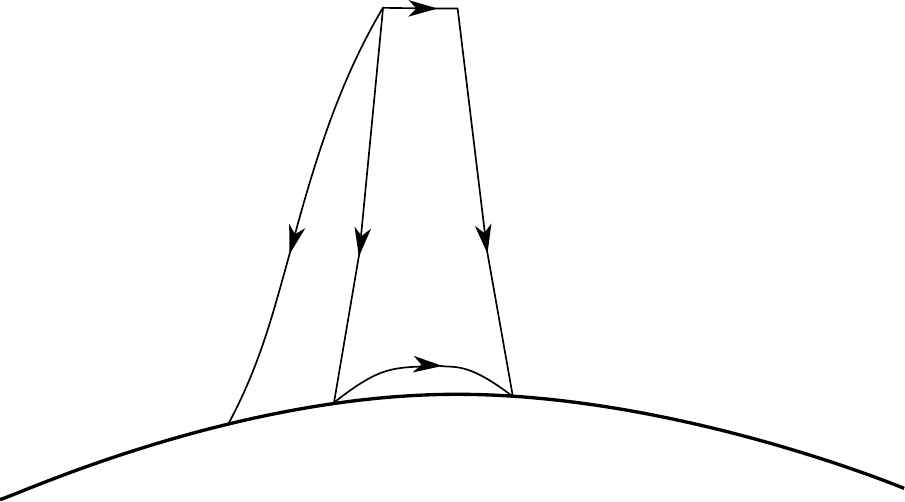}
\caption{Paths involved in the proof of \ref{lem:bp_parabolic} }
\end{figure}

Let $D$ be the constant of Lemma \ref{lem:gon}. 
An important fact is that there is a constant $m>0$ such that if an $H_\omega$-component $s$ is connected to an $H_\mu$-component, and $\d_X(s_-,s_+)>m$, then $\mu=\omega$ (see for example, \cite[Lemma 4.2]{AC3}). 
Without loss of generality, we can assume $m<D.$

Let $q_a$ and $q_b$ be geodesics in $\ga(G,X\cup \cH)$ from $a$ to $\wh{z_a}$ and from $b$ to $\wh{z_b}$ respectively. 
Let $e$ be the edge with $e_-=a$, $e_+=b$ (this edge has its label in $X$). 
Let $f$ be the edge with $f_-=\wh{z_a}$ and $f_+=\wh{z_b}$ (and label in $H_\omega$) and consider the geodesic $4$-gon with sides $e,f,q_a,q_b$. 
We have to bound $\d_X(f_-,f_+)$. 
Suppose that $\d_X(f_-,f_+)>m$. 
If $f$ is not isolated in the $4$-gon, say it is connected to a component of $q_a$, then this component must be an $H_\omega$-component and hence $\d_{X\cup \cH}(a,\wh{z_a})>\d_{X\cup \cH}(a, H_\omega)$ getting a contradiction. 
We obtain a similar contradiction if  $f$ is connected to a component of $q_b$. 
If $e$ and $f$ are connected, then $e$ is an $H_\omega$-component and  $\{a,b\}\in H_\omega$ and then $a=\wh{z_a}$ and $b=\wh{z_b}$ and $\d_X(z_a,z_b)=1$. 
Thus, we are left with the case where $f$ is isolated, and hence  by  Lemma \ref{lem:gon}, $\d_{X}(\wh{z_a},\wh{z_b})\leq 3D$.

Now let $w\in X^*$ a label of a geodesic path from $a$ to $z_a$. 
Let $\wh{w}$ be as above, and $r_a$ the paths from $a$ to $z_a$ in $\ga(G,X\cup \cH)$ with label $\wh{w}$.
Let $t$ be the edge from $z_a$ to $\wh{z_a}$ (with label in $H_\omega$).  
Consider the $(\lambda,c)$-quasi-geodesic triangle with sides $r_a, q_a, t$.  
Assume that $d_X(t_-,t_+)>m$. 
By the above argument, $t$ cannot be connected to a component of $q_a$. 
If $t$ is connected to a component of $r_a$, then there is vertex of $r_a$ in $H_\omega$, which means that there is a prefix of $w_0$ of $w$ such that $aw_0\in H_\omega$, which means that $\d_X(a,p_a)>\ell(w_0)\geq \d_X(a,H_\omega)$ giving a contradiction. 
Then  $t$ is isolated in the triangle and $\d_X(p_a,\wh{z_a})\leq 2D$.

The same arguments shows that $\d_X(z_b,\wh{z_b})\leq 2D$ and hence $\d_X(z_a,z_b)\leq 7D$.
\end{proof}

Combining the lemma with the results of \cite{AC3} discussed in the introduction we have:
\begin{cor}
Let  $G$ finitely generated and hyperbolic relative to $\{H_\omega\}_{\omega \in \Omega}$ such that each $H_\omega$ has the falsification by fellow traveler property respect to some finite generating set. Then there is a finite generating set $X$ of $G$ such that $\ga(G,X)$ is fftp and each $H_\omega$ has bounded projections in $\ga(G,X)$.
\end{cor}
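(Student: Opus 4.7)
The strategy is to combine Lemma \ref{lem:bp_parabolic} (which controls bounded projections) with the \cite{AC3} transfer theorem quoted in the introduction and in the paragraph preceding Lemma \ref{lem:gon} (which controls fftp of $\ga(G,X)$). Both conclusions are of the shape ``works for any $X$ with $Y\cup\cH_0\subseteq X\subseteq Y\cup\cH$ where $\cH_0\subseteq\cH$ is a prescribed finite set,'' so they are in principle compatible.

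Concretely, I would start from any finite symmetric generating set $Y$ of $G$ (and may assume $\Omega$ is finite, by passing to conjugacy-class representatives, since $G$ is finitely generated). Applying the AC3 transfer theorem to $Y$ gives a finite $\cH_1\subseteq\cH$ such that for any finite $X$ with $Y\cup\cH_1\subseteq X\subseteq Y\cup\cH$ and each $\ga(H_\omega, X\cap H_\omega)$ fftp, the Cayley graph $\ga(G,X)$ is fftp. Applying Lemma \ref{lem:bp_parabolic} to $Y$ gives a finite $\cH_2\subseteq\cH$ such that for any finite $X$ with $Y\cup\cH_2\subseteq X\subseteq Y\cup\cH$, each $H_\omega$ has bounded projections in $\ga(G,X)$.

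Next I need to select $X$ so that each $\ga(H_\omega,X\cap H_\omega)$ is fftp. For each $\omega$, the hypothesis furnishes some finite fftp generating set of $H_\omega$; by \cite[Proposition 3.2]{AC3}, I may enlarge the finite set $H_\omega\cap(Y\cup\cH_1\cup\cH_2)$ to a finite generating set $T_\omega\subseteq H_\omega$ of $H_\omega$ such that $\ga(H_\omega,T_\omega)$ is fftp. Setting $X=Y\cup\cH_1\cup\cH_2\cup\bigcup_{\omega}T_\omega$ gives a finite symmetric generating set with $Y\cup\cH_1\cup\cH_2\subseteq X\subseteq Y\cup\cH$.

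The main obstacle is to verify the hypothesis $\ga(H_\omega,X\cap H_\omega)$ fftp for this $X$: a priori $X\cap H_\omega$ may strictly contain $T_\omega$, because elements of $T_\mu$ for $\mu\neq\omega$ (or of $Y\cup\cH_1\cup\cH_2$) could lie in $H_\omega$. For the ``$Y\cup\cH_1\cup\cH_2$'' part this is handled by the choice $T_\omega\supseteq H_\omega\cap(Y\cup\cH_1\cup\cH_2)$. For the cross-term $T_\mu\cap H_\omega$, I would invoke the standard fact in the relatively hyperbolic setting that distinct peripherals meet in at most a finite subgroup, and absorb that finite (hence harmless) overlap into $T_\omega$ via one further application of \cite[Proposition 3.2]{AC3}; since only finitely many pairs $(\mu,\omega)$ arise, the process terminates in a single round. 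Once $X\cap H_\omega=T_\omega$ holds for each $\omega$, both AC3 transfer and Lemma \ref{lem:bp_parabolic} apply to $X$, yielding simultaneously that $\ga(G,X)$ is fftp and that every $H_\omega$ has bounded projections in $\ga(G,X)$.
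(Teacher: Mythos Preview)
Your proposal is correct and follows exactly the route the paper intends: the paper gives no explicit proof beyond the sentence ``Combining the lemma with the results of \cite{AC3} discussed in the introduction we have,'' and your argument spells out precisely that combination (intersect the two finite sets $\cH_1,\cH_2$ coming from the transfer theorem and from Lemma~\ref{lem:bp_parabolic}, then enlarge via \cite[Proposition~3.2]{AC3}). Your treatment of the cross-term $T_\mu\cap H_\omega$ via the finiteness of $H_\mu\cap H_\omega$ for distinct peripherals is a detail the paper leaves implicit; it is handled correctly, though one could streamline by first adjoining all the (finitely many) elements of $\bigcup_{\mu\neq\omega}(H_\mu\cap H_\omega)$ to $T_\omega$ before the single enlargement step.
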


\subsection{Quasi-convex subsets of CAT(0) cube complexes}
Let $\cC$ be a locally finite, CAT(0) cube complex. 
Let $\ga$ be the $1$-skeleton of $\cC$. Then $\ga$ is an fftp graph.
This fact appears in the proof  \cite[Theorem 1.1.]{NoskovCC}, where there is an extra hypothesis to make $\ga$ a Cayley graph, however, the fact that it is a Cayley graph is not used through the proof (for showing fftp), the point being that a path in $\ga$ is geodesic if and only if it does not cross twice the same wall. Using the CAT(0)-cubical geometry, Noskov shows that one can take 2 as (synchronous) fftp constant.

\begin{lem}\label{lem:proj-median}
Let  $\ga$ be the $1$-skeleton of a locally finite CAT(0) cube complex.  Let $Z$ be a $\sigma$-quasi-convex subset of $\ga$. Then $Z$ has bounded projections. 
\end{lem}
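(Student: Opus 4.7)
The plan is to exploit the fact that the $1$-skeleton of a CAT(0) cube complex is a median graph (Chepoi's theorem): for any three vertices $x,y,z$ of $\ga$ there is a unique vertex $m=m(x,y,z)$ lying on a combinatorial geodesic between each of the three pairs, so that $\d(x,y)=\d(x,m)+\d(m,y)$, $\d(x,z)=\d(x,m)+\d(m,z)$, and $\d(y,z)=\d(y,m)+\d(m,z)$. Given adjacent vertices $u,v\in V\ga$ and projections $z_u\in\pi_Z(u)$, $z_v\in\pi_Z(v)$, the natural auxiliary point is $m:=m(u,z_u,z_v)$.

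The crux is to bound $\d(m,z_u)$ by $\sigma$. Since $m$ lies on a geodesic from $z_u$ to $z_v$, both of which are in $Z$, the $\sigma$-quasi-convexity of $Z$ yields $z\in Z$ with $\d(m,z)\leq \sigma$. Then
\[
\d(u,m)+\d(m,z_u)=\d(u,z_u)=\d(u,Z)\leq \d(u,z)\leq \d(u,m)+\sigma,
\]
so $\d(m,z_u)\leq \sigma$. To bound $\d(m,z_v)$ I compare via $u$: since $z_v\in Z$ we have $\d(u,z_v)\geq \d(u,z_u)$, and since $\d(v,Z)\leq \d(u,v)+\d(u,Z)=1+\d(u,z_u)$, the triangle inequality gives $\d(u,z_v)\leq \d(u,v)+\d(v,z_v)\leq \d(u,z_u)+2$. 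Subtracting the two median identities $\d(u,z_v)=\d(u,m)+\d(m,z_v)$ and $\d(u,z_u)=\d(u,m)+\d(m,z_u)$ yields $\d(m,z_v)\leq \d(m,z_u)+2\leq \sigma+2$, and the third median identity gives $\d(z_u,z_v)=\d(z_u,m)+\d(m,z_v)\leq 2\sigma+2$.

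The same argument applied to two projections $z_u,z'_u\in\pi_Z(u)$ via $m'=m(u,z_u,z'_u)$ shows that both $\d(m',z_u)$ and $\d(m',z'_u)$ are at most $\sigma$, whence $\diam(\pi_Z(u))\leq 2\sigma$. Together this gives $\diam(\pi_Z(u)\cup\pi_Z(v))\leq 2\sigma+2$, establishing bounded projections. The main obstacle is \emph{choosing the right auxiliary point}: the median $m(u,z_u,z_v)$ simultaneously detects the minimality defining $z_u$ (through $u$) and the $\sigma$-quasi-convexity of $Z$ (through its presence on a $z_u$--$z_v$ geodesic), and it is not obvious a priori that a single vertex can play both roles. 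Once this is in place the argument reduces to bookkeeping with the three median identities, with no need to invoke cubical convex hulls or any global structure of $\ga$ beyond its median geometry.
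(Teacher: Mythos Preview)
Your proof is correct and follows essentially the same strategy as the paper: exploit Chepoi's theorem that the $1$-skeleton is a median graph, pick the median $m=m(u,z_u,z_v)$, and use that $m$ lies on a $z_u$--$z_v$ geodesic (so $\d(m,Z)\le\sigma$ by quasi-convexity) together with minimality of $z_u$ to get $\d(m,z_u)\le\sigma$. The one genuine difference is in bounding $\d(m,z_v)$: the paper introduces a second median $m_v=m(v,z_u,z_v)$, obtains $\d(m_v,z_v)\le\sigma$ by the symmetric argument, and then invokes the fact that the median map is $1$-Lipschitz (citing Chatterji--Dru\c{t}u--Haglund) to get $\d(m,m_v)\le 1$, yielding $\d(z_u,z_v)\le 2\sigma+1$. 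Your route avoids the second median and the Lipschitz citation entirely, replacing them with the elementary estimate $\d(u,z_v)\le\d(u,z_u)+2$ and a subtraction of two median identities; the cost is a constant one larger ($2\sigma+2$), the gain is that the argument is fully self-contained within the three defining equalities of the median. You also spell out the bound $\diam(\pi_Z(u))\le 2\sigma$ for a single vertex, which the paper leaves implicit.
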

\begin{proof}
For $x,y\in V\ga$, let $I[x,y]$ denote the set of vertices that appear in combinatorial geodesics from $x$ to $y$.
Note that $\ga$ is a median graph \cite[Theorem 6.1.]{Chepoi}, that is given any 3 vertices $x,y,z\in V\ga$,  the intersection $I(x,y)\cap I(x,z)\cap I(z,y)$ consist on a single vertex denoted $\mu(x,y,x)$ and called the {\it median} of $x,y,z$.

Let $x,y\in V\ga$, $d_\ga(x,y)=1$ and $z_x\in\pi_Z(x)$, $z_y\in \pi_Z(y)$. 
Let $m_x$ be the median of $x,z_x,z_y$ and $m_y$ the median of $y,z_y,z_x$. Since $m_x$ is in a geodesic from $z_x$ to $z_y$, $\d_\ga(m_x,Z)\leq \sigma$. 
Since $z_x\in Z$ and $m_x$ is in a geodesic path that realizes the minimum  distance from  $x$ to $Z$, we have that $d_\ga(m_x,z_x)\leq \sigma$. 
Similarly, $\d_\ga(m_y,z_y)\leq \sigma$.

Now, the median map $\mu$ is 1-Lipschitz \cite[Corollary 2.15]{ChaDruHag}, and thus since $d(x,y)=1$, we have that $\d(\mu(x,z_x,z_y),\mu(y,z_x,z_y))\leq 1$ and thus $\d(z_x,z_y)\leq 1+2 \sigma$.
\end{proof}

\subsection{Parabolic subgroups of right angled Artin groups}

Recall that a right angled Artin group $G$ has a standard presentation $\prs{X}{R}$ in which the only relations of $R$ consist on commuting relations among the elements of $X$. A parabolic subgroup $P\leqslant G$ is a subgroup that is $G$-conjugate to a subgroup $\gen{Y}$ where $Y$ is some subset of $X$. The Cayley graph of  $\ga(G,X)$ is the 1-skeleton of the Salvetti Complex, that is a locally finite CAT(0) cube complex. Therefore $\ga(G,X)$ has fftp. By \cite[Lemma 3.6]{HsuWise}, $P$ is quasi-convex in $\ga(G,X)$.

Thus we obtain the following corollary of Lemma \ref{lem:proj-median}, which gives another interesting family of examples for  Theorem \ref{thm:mainB}.
\begin{cor}
Let $G$ be a finitely generated  right angled Artin group, $X$ the standard generating set, and $P$ a parabolic subgroup of $G$. Then $\ga(G,X)$ has the falsification by fellow traveler property and $P$ has bounded projections in $\ga(G,X)$.
\end{cor}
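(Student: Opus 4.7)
The proof will simply assemble the three ingredients that have just been laid out in the preceding paragraph.

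The plan is to first recall that the Cayley graph $\ga(G,X)$ is canonically identified with the $1$-skeleton of the Salvetti complex of $G$. Since $X$ is finite, the Salvetti complex is a finite-dimensional, locally finite CAT(0) cube complex, so by the discussion opening this subsection (following Noskov's proof of \cite[Theorem 1.1]{NoskovCC}, applied to the $1$-skeleton $\ga$ rather than to a Cayley graph structure) the graph $\ga(G,X)$ has the falsification by fellow traveler property with synchronous constant $2$. This is the first assertion.

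For the second assertion, I would invoke \cite[Lemma 3.6]{HsuWise}, which tells us that any parabolic subgroup $P \leqslant G$ is quasi-convex in $\ga(G,X)$, i.e.\ there exists $\sigma \geq 0$ such that $P$ is $\sigma$-quasi-convex as a subgraph. Lemma \ref{lem:proj-median} then applies verbatim: since $\ga(G,X)$ is the $1$-skeleton of a locally finite CAT(0) cube complex and $P$ is $\sigma$-quasi-convex, we obtain a constant $\cte = 1 + 2\sigma$ such that $P$ has $\cte$-bounded projections. In particular $P$ has fellow projections as well.

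There is essentially no obstacle here beyond verifying that the hypotheses of the cited results are genuinely met. The only mild subtlety is that \cite[Lemma 3.6]{HsuWise} is usually stated for standard parabolics $\gen{Y}$, whereas our $P$ is merely conjugate to such a subgroup; but bounded projections and quasi-convexity are both preserved under the action of $G$ by isometries of $\ga(G,X)$, so it suffices to handle the standard case, which is exactly what Hsu--Wise cover.
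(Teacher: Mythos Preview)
Your proposal is correct and follows exactly the same route as the paper: the paragraph preceding the corollary already records that $\ga(G,X)$ is the $1$-skeleton of a locally finite CAT(0) cube complex (hence fftp by the Noskov argument), that $P$ is quasi-convex by \cite[Lemma 3.6]{HsuWise}, and then the corollary is stated as an immediate consequence of Lemma~\ref{lem:proj-median}. Your additional remark about passing from standard parabolics to their conjugates via the isometric $G$-action is a welcome clarification that the paper leaves implicit.
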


\subsection{Standard parabolic subgroups of Coxeter groups}

Let $S$ be a finite set.
A \emph{Coxeter matrix} over $S$ is a square matrix $M=(m_{s,t})_{s,t \in S}$ with coefficients in $\N  \cup \{ \infty \}$ such that  $m_{s,s}=1$ for all $s \in S$ and $m_{s,t} = m_{t,s} \ge 2$ for all $s,t \in S$, $s \neq t$. The \emph{Coxeter group associated to $M$}  is the group given by the following presentation:
\[
W = \langle S \mid s^2=1 \text{ for all } s \in S\,, (st)^{m_{s,t}}=1 \text{ for all } s,t \in S \text{ such that } s \neq t \text{ and } m_{s,t} \neq \infty \rangle\,.
\]

Let $X$ be a subset $S$.
We denote by $W_X$ the subgroup of $W$ generated by $X$.
By Bourbaki \cite{Bourb1}, $W_X$ is a Coxeter group (associated to the submatrix of $M$ corresponding to $X$).
It is called a \emph{standard parabolic subgroup} of $W$. We recall now some standard facts about Coxeter groups.

We say that $w \in W$ is \emph{$X$-reduced} if it is of minimal length in its coset $W_Xw$.

\begin{lem}\label{lem:X-reduced}(Bourbaki \cite{Bourb1})
\begin{itemize}
\item[(1)]
There exists a unique $X$-reduced element in each coset $c \in W_X \backslash W$.
\item[(2)]
Let $u \in W$.
Then $u$ is $X$-reduced if and only if $|su|_S = |u|_S+1$ for all $s \in X$.
\item[(3)]
Let $w \in W$.
Let $u$ be the (unique) $X$-reduced element lying in $W_Xw$ and let $v \in W_X$ such that $w=vu$.
Then $|w|_S = |v|_S + |u|_S$.
\end{itemize}
\end{lem}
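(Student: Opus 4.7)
The plan is to deduce all three assertions from a single strengthening, which I label $(\star)$: if $u \in W$ satisfies $|su|_S > |u|_S$ for every $s \in X$, then $|vu|_S = |v|_S + |u|_S$ for every $v \in W_X$.

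Granting $(\star)$, the three items will follow quickly. For (2), the forward direction is an elementary parity observation: if $u$ is $X$-reduced and $s \in X$, then $su \in W_X u$ forces $|su|_S \geq |u|_S$, and since left multiplication by a generator changes length by exactly $\pm 1$, equality $|su|_S = |u|_S + 1$ must hold. The reverse direction is immediate from $(\star)$, since $|vu|_S = |v|_S + |u|_S \geq |u|_S$. For (1), existence of an $X$-reduced element in each coset follows from well-ordering of $\mathbb{N}$, and such an element automatically satisfies the characterization in (2); uniqueness follows because any two $X$-reduced elements $u, u' \in W_X w$ are related by $u' = vu$ for some $v \in W_X$, and $(\star)$ then gives $|u|_S = |u'|_S = |v|_S + |u|_S$, forcing $v = e$. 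Part (3) is precisely $(\star)$ applied to the unique $X$-reduced representative of $W_X w$.

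My plan for $(\star)$ is induction on $|v|_S$; the base $v = e$ is trivial. For the inductive step I write $v = s v_0$ with $s \in X$ and $|v_0|_S = |v|_S - 1$, so the inductive hypothesis yields $|v_0 u|_S = |v_0|_S + |u|_S$ and allows me to fix a reduced expression $v_0 u = s_1 \cdots s_q\, s_{q+1} \cdots s_p$ split as a reduced expression for $v_0$ (first $q = |v_0|_S$ letters) followed by one for $u$. Supposing for contradiction that $|vu|_S \neq |v|_S + |u|_S$, the parity of length changes under multiplication by a generator forces $|vu|_S = |v_0 u|_S - 1$, and the Exchange Condition produces an index $i \in \{1, \ldots, p\}$ with $s v_0 u = s_1 \cdots \widehat{s_i} \cdots s_p$. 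In the sub-case $i \leq q$, right-cancelling $u$ yields $|s v_0|_S \leq q - 1 = |v_0|_S - 1$, which contradicts $|s v_0|_S = |v|_S = |v_0|_S + 1$.

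The hard part will be the remaining sub-case $i > q$. Here the deletion occurs inside the reduced expression for $u$, and left-cancelling $v_0$ gives $r u = u'$ where $r = v_0^{-1} s v_0 \in W_X$ is a reflection and $u' = s_{q+1} \cdots \widehat{s_i} \cdots s_p$ satisfies $|u'|_S < |u|_S$; put differently, some reflection of $W_X$ shortens $u$ on the left, and I must contradict the hypothesis that no simple reflection in $X$ does. I plan to resolve this via the geometric (reflection) representation of $(W, S)$: the hypothesis $|su|_S > |u|_S$ for every $s \in X$ translates to $u^{-1}(\alpha_s)$ being a positive root for each $s \in X$, where $\alpha_s$ is the simple root associated to $s$. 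Since every positive root of the parabolic sub-root-system associated to $X$ is a non-negative linear combination of $\{\alpha_s : s \in X\}$, linearity of the $W$-action then forces $u^{-1}$ to send the positive part of the sub-root-system into $\Phi^+$; equivalently, no reflection in $W_X$ belongs to the left inversion set of $u$, contradicting $|ru|_S < |u|_S$. This root-theoretic propagation of descents from simple reflections to all reflections of $W_X$, which amounts to the fact that $(W_X, X)$ is itself a Coxeter system compatibly embedded in $(W, S)$, is the genuine technical content of the lemma.
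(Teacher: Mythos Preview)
Your proof is correct. The paper does not actually prove this lemma; it is stated with attribution to Bourbaki and used as a black box. Your argument via the strengthening $(\star)$, induction on $|v|_S$, the Exchange Condition, and the root-system propagation (simple descents in $X$ imply all $W_X$-reflection descents via non-negativity of the sub-root-system's positive roots in the $\{\alpha_s : s\in X\}$ basis) is one of the standard routes to this result and is essentially how it is handled in the Coxeter-group literature. One small implicit step worth making explicit: when you write $v = s v_0$ with $s \in X$ and $|v_0|_S = |v|_S - 1$, you are using that every element of $W_X$ admits an $S$-reduced expression with letters in $X$; this follows from the Deletion Condition applied to any $X$-word for $v$, so there is no circularity.
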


We will also make use of the ``Exchange condition'' that characterizes the Coxeter groups (see \cite[Chapter 4]{Davis1} for example).

\begin{lem}\label{lem:exchangecond}
Let $w \in W$ and let $s,t \in S$ such that $|sw|_S=|wt|_S=|w|_S+1$ and $|swt|_S<|w|_S+1$. 
Then $sw=wt$.
\end{lem}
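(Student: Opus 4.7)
The plan is to apply the standard exchange (deletion) condition recalled implicitly in the paragraph before the lemma, but applied from the right, to a carefully chosen reduced expression for $sw$.

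First I would fix a reduced expression $w = s_1 s_2 \cdots s_n$ with $n = |w|_S$. Since $|sw|_S = |w|_S + 1 = n+1$, the concatenation $s\, s_1 s_2 \cdots s_n$ is itself reduced and so is a reduced expression for $sw$. For notational convenience write $s_0 \coloneq s$, so that $s_0 s_1 \cdots s_n$ is a reduced expression for $sw$ of length $n+1$.

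Now the hypothesis $|swt|_S < |w|_S + 1 = |sw|_S$ says that right-multiplying this reduced word by $t$ strictly decreases the $S$-length. The exchange condition (stated on the right, which is equivalent to the left version by applying the anti-automorphism $g \mapsto g^{-1}$ and using that generators are involutions) then produces an index $i \in \{0, 1, \ldots, n\}$ such that
\[
swt = s_0 s_1 \cdots \widehat{s_i} \cdots s_n,
\]
i.e.\ $swt$ is obtained from the reduced expression of $sw$ by omitting one letter.

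The key step is the case analysis on $i$. If $i \ge 1$, then after cancelling $s_0 = s$ on the left we obtain $wt = s_1 \cdots \widehat{s_i} \cdots s_n$, which has length at most $n-1$; this contradicts the hypothesis $|wt|_S = n+1$. Therefore $i = 0$, and the identity becomes $swt = s_1 \cdots s_n = w$. Using that $t$ is an involution, this rearranges to $sw = wt$, as required.

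The main (and essentially only) obstacle is invoking the exchange condition in the correct form: we are deleting a letter from a reduced word on the left caused by a right-multiplication, and one should either cite the symmetric version directly from \cite[Chapter 4]{Davis1} or derive it in one line from the standard left version by passing to inverses. Once that is in place, the argument is a two-line case distinction.
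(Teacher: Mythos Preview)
Your argument is correct. A small implicit point worth making explicit: from $|sw|_S=n+1$ and the fact that right-multiplication by a generator changes $S$-length by exactly $\pm 1$, the hypothesis $|swt|_S<n+1$ forces $|swt|_S=n=|sw|_S-1$, which is exactly what is needed to invoke the (right) exchange condition on the reduced word $s_0s_1\cdots s_n$. After that, the case split $i=0$ versus $i\ge 1$ goes through as you wrote.

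As for comparison with the paper: there is nothing to compare. The paper does not prove this lemma; it is stated as the classical exchange condition and simply attributed to \cite[Chapter~4]{Davis1}. Your write-up is a clean self-contained derivation of this particular formulation from the standard one, and would serve perfectly well if one wanted to include a proof rather than a citation.
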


The following proof was provided by Luis Paris.
\begin{prop}\label{prop:Luis} 
Let $W$ be a Coxeter group, $S$ a set of Coxeter generators and  $X\subseteq S$. Then $W_X$ has $1$-bounded projections in $\ga(W,S)$.
\end{prop}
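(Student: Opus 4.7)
My approach is to first identify $\pi_{W_X}(u)$ explicitly as a singleton using the unique decomposition coming from Lemma \ref{lem:X-reduced}(3), and then compare the projections of two $\ga$-adjacent vertices via the Exchange Condition.

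I would begin by observing that for each $u\in W$, Lemma \ref{lem:X-reduced}(1) and (3) provide a unique decomposition $u=vu^*$ with $v\in W_X$, $u^*$ the $X$-reduced representative of the right coset $W_Xu$, and $|u|_S=|v|_S+|u^*|_S$. I claim $\pi_{W_X}(u)=\{v\}$: for any $z\in W_X$ we have $\d_\ga(u,z)=|z^{-1}u|_S$, and as $z$ varies over $W_X$ the element $z^{-1}u$ ranges over all of $W_Xu$; by uniqueness of the $X$-reduced element, the minimum length $|u^*|_S$ is attained precisely when $z^{-1}u=u^*$, forcing $z=u(u^*)^{-1}=v$. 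Thus each projection is a single vertex.

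Next, take adjacent vertices $u,u'\in W$ with $u'=us$ for some $s\in S$, and write $u=vu^*$ and $u'=v'u'^*$. Since $u'\in W_Xu^*s$, the problem reduces to determining the $X$-reduced representative of the right coset $W_Xu^*s$. I would split into two cases. If $u^*s$ is already $X$-reduced, then $u'^*=u^*s$ and from $u'=v\cdot u^*s$ we read off $v'=v$. Otherwise, there is $t\in X$ with $|tu^*s|_S<|u^*s|_S$. A short argument based on Lemma \ref{lem:X-reduced}(2) rules out the possibility $|u^*s|_S=|u^*|_S-1$ in this subcase: indeed $|tu^*|_S=|u^*|_S+1$ would then force $|tu^*s|_S\geq|u^*|_S=|u^*s|_S+1$, contradicting the choice of $t$. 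Hence $|u^*s|_S=|u^*|_S+1$, and applying the Exchange Condition (Lemma \ref{lem:exchangecond}) with $w=u^*$, the letter $t$ prepended and $s$ appended, yields $tu^*=u^*s$. Consequently $W_Xu^*s=W_Xu^*$, so $u'^*=u^*$, and from $u'=vu^*s=vt u^*=(vt)u^*$ we read off $v'=vt$.

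In either case $v^{-1}v'\in\{1\}\cup X\subseteq\{1\}\cup S$, so $\d_\ga(v,v')\leq 1$, and the diameter of $\pi_{W_X}(u)\cup\pi_{W_X}(u')=\{v,v'\}$ is at most $1$. I expect the only delicate step to be the clean case split and the correct orientation when invoking the Exchange Condition; once those are in place, everything reduces to routine bookkeeping with the canonical decomposition $u=vu^*$.
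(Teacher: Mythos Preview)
Your proof is correct and follows essentially the same strategy as the paper's: identify $\pi_{W_X}(u)$ as the singleton $\{v\}$ via the canonical decomposition $u=vu^*$, then for adjacent vertices use the Exchange Condition to show the two projections differ by at most one generator in $X$. The only cosmetic difference is that the paper first normalizes by swapping $u$ and $u'$ so that $|u'|_S=|u|_S+1$ (which immediately forces $|u^*s|_S=|u^*|_S+1$), whereas you keep the symmetric setup and instead rule out $|u^*s|_S=|u^*|_S-1$ directly inside the non-$X$-reduced case; both routes feed the same hypotheses into Lemma~\ref{lem:exchangecond}.
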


\begin{proof}
Let $w \in W$.
Write $w= vu$ where $u$ is $X$-reduced and $v \in W_X$.
By Lemma \ref{lem:X-reduced}, for all $v' \in W_X$, we have 
\[
d_S (v',w) = |{v'}^{-1}vu|_S = |{v'}^{-1}v|_S + |u|_S \ge |u|_S = d_S(v,w)\,,
\]
and we have equality if and only if $|{v'}^{-1}v|_S=0$, that is, if and only if $v'=v$.
So, $\pi_{W_X} (w) = \{ v\}$.

Let $w' \in W$ such that $d_S(w',w)=1$.
Let $t \in S$ such that $w' =wt$.
Upon exchanging $w'$ and $w$ we may assume that $|w'|_S = |wt|_S = |w|_S+1$.
If $ut$ is $X$-reduced, then, by the above, $\pi_X(w') = \{v\}$ (since $wt = vut$) and the diameter of $\pi_{W_X}(w) \cup \pi_{W_X}  (w') = \{v\}$ is $0$.
So, we may assume that $ut$ is not $X$-reduced.
By Lemma \ref{lem:X-reduced} there exists $s \in X$ such that $|sut|_S < |ut|_S= |u|_S+1$.
We also have $|ut|_S = |u|_S+1$ by hypothesis and $|su|_S = |u|_S +1$ since $u$ is $X$-reduced.
By Lemma \ref{lem:exchangecond} this implies that $su = ut$, hence $w' = vsu$.
Since $u$ is $X$-reduced, it follows that $\pi_{W_X} (w') = \{ vs \}$, hence the diameter of $\pi_{W_X} (w) \cup \pi_{W_X} (w') = \{v, vs\}$ is $1$.
\end{proof}

\noindent{\textbf{{Acknowledgments}}}
This work was originated by a question of Ian Leary, who asked the author about growth Schreier graphs of relatively hyperbolic groups with respect to  parabolic subgroups, in the ferry returning from the conference in Ventotene 2015. The relative falsification by fellow traveler property appeared in the paper after a question of Ilya Gekhtman about the case of automatic groups. The author thanks Luis Paris for his interest and for providing the proof of Proposition \ref{prop:Luis}.
Finally, the authors thanks an anonymous referee for valuable comments that improved the exposition of the paper.


\end{document}